\documentclass[12pt]{article}
\usepackage[utf8]{inputenc}
\usepackage{geometry}
\geometry{hmargin=.75in,vmargin={1.25in,1.25in},nohead,footskip=0.5in}

\setlength{\baselineskip}{0.5in} \setlength{\parskip}{.05in}
\usepackage[numbers]{natbib}

\usepackage{algorithm}
\usepackage{algpseudocode}
\usepackage{amsmath, amsfonts, amssymb, amsthm, graphicx}
\usepackage{authblk}
\usepackage{appendix}
\usepackage{verbatim}
\usepackage{float}
\usepackage{caption}
\usepackage{subcaption}
\usepackage{comment}
\usepackage{xcolor}
\definecolor{myblue}{RGB}{0, 0, 139}
\usepackage[colorlinks, citecolor=myblue, linkcolor=myblue]{hyperref}

\usepackage{setspace}
\onehalfspacing

\usepackage{parskip}

\newtheorem{proposition}{Proposition}
\newtheorem{theorem}{Theorem}
\newtheorem{lemma}{Lemma}
\newtheorem{remark}{Remark}

\newtheorem{assumption}{Assumption}

\newtheorem{example}{Example}

\newenvironment{assumption*}
 {\ifnum\value{subassumption}=0 \stepcounter{assumption}\fi\subassumption}
 {\endsubassumption}
\newenvironment{assumption+}[1]
 {\subassumption}
 {\endsubassumption}

\newcommand{\R}{\mathbb{R}}

\let\temp\phi
\let\phi\varphi
\let\varphi\temp

\title{Explicit Constraints on the Geometric Rate of Convergence of Random Walk Metropolis-Hastings}

\author{Riddhiman Bhattacharya\thanks{Purdue University, bhatta76@purdue.edu} ~ and Galin L. Jones\thanks{School of Statistics, University of Minnesota, galin@umn.edu}}    

\begin{document}

\maketitle

\begin{abstract}
Convergence rate analyses of random walk Metropolis-Hastings Markov chains on general state spaces have largely focused on establishing sufficient conditions for geometric ergodicity or on analysis of mixing times.  Geometric ergodicity is a key sufficient condition for the Markov chain Central Limit Theorem and allows rigorous approaches to assessing Monte Carlo error.  The sufficient conditions for geometric ergodicity of the random walk Metropolis-Hastings Markov chain are refined and extended, which allows the analysis of previously inaccessible settings such as Bayesian Poisson regression.  

The key technical innovation is the development of explicit drift and minorization conditions for random walk Metropolis-Hastings, which allows explicit upper and lower bounds on the geometric rate of convergence. Further, lower bounds on the geometric rate of convergence are also developed using spectral theory.  
The existing sufficient conditions for geometric ergodicity, to date, have not provided explicit constraints on the rate of geometric rate of convergence because the method used only implies the existence of drift and minorization conditions.  

The theoretical results are applied to random walk Metropolis-Hastings algorithms for a class of exponential families and generalized linear models that address Bayesian Regression problems.  
\end{abstract}

\section{Introduction}
\label{sec:intro}
The complicated distributions that arise in modern statistics often require Markov chain Monte Carlo (MCMC) methods in order to use them for statistical inference \cite{brooks:etal:2011}.  Metropolis-Hastings (MH) algorithms \cite{hast:1970, metr:1953} are fundamental to MCMC and are often used as building blocks for component-wise MCMC algorithms \cite{john:jone:neat:2013, jone:robe:rose:2014} when full-dimensional updates are infeasible.  Under weak regularity conditions MH will eventually produce a representative sample from the desired target distribution, but a fast rate convergence is crucial to the efficacy and reliability of the algorithm in practice.

Assessing the reliability of a MH simulation is possible when a Markov chain CLT holds \cite{fleg:etal:2020, geye:1992, jone:etal:2006, robe:etal:viz:2021, vats:etal:sve:2018}.  A key sufficient condition for a Markov chain CLT is geometric ergodicity \cite{chan:geye:1994, geye:1994, jone:2004}, which will also ensure that the Markov chain will converge quickly, at least in a qualitative sense \cite{jone:hobe:2001}. As such, there has been significant work on MH Markov chains on general state spaces focused on establishing sufficient conditions for geometric ergodicity \citep{brofos2023geometric, corn:etal:2019, john:geye:2012, roy2021convergence, tier:1994}.  

In particular, there are well-known sufficient conditions for geometric convergence of random walk MH (RWMH) \cite{jarn:hans:2000, meng:twee:1996, robe:twee:1996}, which are now described informally but will be stated more carefully later.  Suppose the target distribution $F$ has a Lebesgue density $f$ on $\R^p$, then the conditions for geometric ergodicity come in two parts \cite[][Theorem 4.3]{jarn:hans:2000}.  The first is that $f$ is superexponentially light in the sense that the tails of $f$ decay at an exponential rate.   If $\|\cdot\|$ is the usual Euclidean norm, the second is the so-called curvature condition  
\begin{align}\label{eq:curvature}
\limsup_{\|x\|\to \infty}\left<\frac{x}{\|x\|},\frac{\nabla f(x)}{\|\nabla f(x)\|}\right> < 0.
\end{align}
 The curvature condition has been verified in some applications \citep{chri:moll:waag:2001, vats:etal:moa:2019}, but it does not hold for some important settings such as Bayesian Poisson regression models \cite{john:geye:2012}. This is addressed in Section~\ref{sec:GE_RWMH} where a more general  geometric structure of $f$ is exploited to refine and generalize the sufficient conditions for geometric ergodicity of RWMH.  The new conditions are verified for an important class of exponential family Bayesian regression models, including some Bayesian Poisson regression models. The details are given in Theorems~\ref{thm:rwmh_ge} and~\ref{expfam:ergdocitythm}.  

A limitation of the sufficient conditions, including those developed here,  is that they only imply the {\em qualitative} property of geometric ergodicity and previously have not provided any constraints in the geometric rate of convergence \citep[][p. 342]{jarn:hans:2000}.  Some additional notation is required to describe this more precisely.  If $d_{TV}$ denotes total variation distance and $\mathcal{L}(X_n)$ the marginal distribution of the $n$-th step of the RWMH, then the RWMH Markov chain is geometrically ergodic for an initialization at $x \in \mathbb{R}^{p}$ if there exists a real-valued function $M$ on $\mathbb{R}^{p}$ and $0< \rho < 1$ such that 
\begin{equation}
\label{eq:geom_erg}
d_{TV}(\mathcal{L}(X_n), F)  \le M(x) \rho^n .
\end{equation}
The sufficient conditions  only imply the existence of an appropriate $M$ and $\rho$ because the technique used in deriving them only implies the existence of appropriate drift and minorization conditions.  This is addressed in Section~\ref{sec:main_results} by deriving \textit{explicit} drift and minorization conditions for  RWMH, which appears to be a first for MH on general state spaces and requires some novel technical arguments. This will then allow an appeal to existing upper and lower bounds \cite[see, e.g.,][]{brow:jone:2022a, rose:1995a} for $M$ and $\rho$.  The details are given in Theorem~\ref{thm:main_result} and Proposition~\ref{prop1}.

While establishing drift and minorization conditions is a standard approach to establishing geometric ergodicity \cite{meyn:twee:2009} another approach entails establishing a spectral gap for the Markov kernel operator of the Markov Chain~\cite{baxe:2005}. Both approaches often provide similar bounds on the convergence of the Markov chain~\cite{roberts_tweedie_2001}.   Lower bounds on $\rho$ using spectral theory are developed in Section~\ref{sec:spectral_results}.  The details are given in Propositions~\ref{Spectral:lwr:bnd1} and~\ref{Spectral:lwr:bnd2}.

Due to the importance of RWMH, there is a substantial amount of related work~\citep{andrieu2022explicit, Belloni_2009, john:smit:2018, wu2022minimax, zhou2022dimension}. For example, some nice recent work on explicit convergence bounds using spectral methods~\cite{andrieu2022explicit}  assumes a Gaussian proposal, strong log-concavity, and Lipschitz gradient.  By contrast, our work does not require distributional assumptions on the proposal and the target log-likelihood need not be strongly concave.  Others have focused on bounding the conductance, restricted to a compact subset of $\mathbb{R}^p$ \cite{Belloni_2009, dwivedi2018log}. There is also work on RWMH mixing time bounds which under the assumption that a drift condition exists~\citep{john:smit:2018}.  Other work considers RWMH in specific settigns.  For example, in a Bayesian variable selection setting \citep{zhou2022dimension} take a different approach by considering a two-step drift condition or bounds on the acceptance probability. 



The remainder is arranged as follows.  In Section~\ref{sec:GE_RWMH} the RWMH algorithm considered is defined and new sufficient conditions for geometric ergodicity are developed. 
Section~\ref{sec:main_results}, considers upper and lower bounds for the geometric rate of convergence of RWMH. In Section~\ref{glm_type_families}, the results are applied to Bayesian generalized linear models and in Section~\ref{sec:spectral_results} spectral theory is used to establish lower bounds on the geometric rate of convergence of RWMH.

\section{Geometric Ergodocity of RWMH}\label{sec:GE_RWMH}

Suppose the distribution $F$ with support $\R^p$ has Lebesgue density
$f$ and for each $x \in \mathbb{R}^p$, the proposal distribution
$Q(x ,\cdot)$ has Lebesgue density $q(x, \cdot)$ such that
$q(x, y) = q(y, x)=q(\|x-y\|)>0$ for all $x, y \in \mathbb{R}^p$ and
$f(x)>0$ for all $x \in \mathbb{R}^p$.  The Hastings ratio is
\[
h(x, y) = \frac{f(y)}{f(x)}
\]
and set $\alpha(x,y)=h(x,y) \wedge 1$. Let $\delta_x(\cdot)$ denote the Dirac measure at $x$.  The one-step Markov transition kernel for the RWMH is given by
\begin{align}\label{RWMH:kernel}
    P(x,dy)=\alpha(x,y)q(x,y)dy + \delta_x(dy) \left[\int_{\mathbb{R}^p} \left(1-\alpha(x,y)\right) q(x,y) dy \right]
\end{align}
and for $n \ge 2$ 
\[
P^n (x, dy) = \int_{\mathbb{R}^p} P(x, du) P^{n-1}(u, dy).
\]
The assumptions imply that the Markov kernel has $F$ as its invariant distribution, is aperiodic, $F$-irreducible, and Harris recurrent.  Hence if $\|\cdot \|_{TV}$ denotes the
total variation norm, for every $x \in \mathbb{R}^{p}$, as $n \to \infty$,
\[
\| P^n(x, \cdot) - F(\cdot) \|_{TV} \to 0.
\]

The remainder of this section focuses on developing sufficient
conditions for the geometric ergodicity of RWMH chains.  However,
these conditions will not provide any constraints on $\rho$, which is
addressed in Section~\ref{sec:main_results}. 

\begin{assumption}
\label{assm:super}
    Let $\langle \cdot , \cdot \rangle$ denote the usual Euclidean inner
    product.   The target density $f(x)$ is continuously differentiable and
    satisfies the superexponential condition  
\begin{align}\label{superexp}
\limsup_{\|x\|\to \infty}\left\langle\frac{x}{\|x\|},\nabla \log f(x)\right\rangle=-\infty .
\end{align}
\end{assumption}

Note that the region of certain acceptance for RWMH is given by 
\[
A(x) =\left\{y: f(y) \ge f(x)\right\}.
\]
 It is crucial that
$A(x)$ have a convenient geometric structure.  Suppose
$0<\alpha<\cos^{-1}(7/8)$ and let
$\tilde{\epsilon}_{\alpha} = (8-8\cos\alpha)^{1/2}$. Denote the unit
hollow sphere centered at the origin by $\mathbb{S}^{p-1}$. For
\begin{align}\label{eps:alpha:defn}
    \epsilon_{\alpha}<\tilde{\epsilon}_{\alpha},
\end{align} define the cone
\begin{equation}
\label{eq:cone}
W^K_{\epsilon_{\alpha}}(x)=\left\{x-a\xi:0<a<K, \ \xi \in \mathbb{S}^{p-1}, \ \left\|\xi-\frac{x}{\|x\|}\right\|\le \frac{\epsilon_{\alpha}}{2} \right\}.
\end{equation}

Let $\overline{A(x)}$ denote the closure of $A(x)$ and let
$\partial\overline{A(x)}$ denote its boundary.

\begin{assumption}\label{assm:cone}
There exists an $M_p>0$ such that for any $\| x \| > M_p$, there is $y \in \partial\overline{A(x)}$, so that $W^K_{\epsilon_{\alpha}}(y) \subset A(x)$. 
\end{assumption} 

The following result shows that these assumptions are sufficient for
ensuring geometric ergodicity of the Random Walk Metropolis-Hastings
algorithm. 

\begin{theorem}
\label{thm:rwmh_ge}
If Assumptions~\ref{assm:super} and ~\ref{assm:cone} hold, then the
RWMH algorithm is geometrically ergodic. 
\end{theorem}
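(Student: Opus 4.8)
\emph{Proof strategy.} The plan is to establish a geometric drift condition for the kernel~\eqref{RWMH:kernel} together with the (standard) fact that compact sets are small for RWMH driven by a strictly positive symmetric proposal, and then to invoke the drift criterion for geometric ergodicity \cite{meyn:twee:2009}, using the $F$-irreducibility, aperiodicity and Harris recurrence already recorded above. I would take the drift function $V(x)=c\,f(x)^{-s}$ for a small fixed $s\in(0,1)$ and a constant $c$ making $V\ge 1$ (equivalently one could work with $e^{s\|x\|}$, as in the classical arguments), and aim to produce $\lambda<1$ and $M<\infty$ with $PV(x)\le\lambda V(x)$ for $\|x\|>M$ and $\sup_{\|x\|\le M}PV(x)<\infty$; the latter, and the smallness of the compact set $\{\|x\|\le M\}$, follow from continuity and positivity of $f$ and from positivity (and mild regularity) of $q$, since then $P(x,\cdot)$ dominates a fixed nontrivial measure on each compact set.

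The core estimate rests on the identity
\[
1-\frac{PV(x)}{V(x)}=\int_{\mathbb{R}^p}\alpha(x,y)\,q(x,y)\left(1-\frac{V(y)}{V(x)}\right)dy ,
\]
whose right side I want to bound below by a positive constant uniformly for $\|x\|$ large, splitting the integral over $A(x)$ and $A(x)^{c}$. On $A(x)$ one has $\alpha(x,\cdot)\equiv 1$ and $V(y)\le V(x)$, so the integrand is nonnegative, and Assumption~\ref{assm:cone} upgrades this to a uniform positive lower bound: the cone $W^K_{\epsilon_{\alpha}}(y^\star)\subset A(x)$ contains a ball $B(y_0,r_0)$ of fixed radius $r_0=r_0(K,\epsilon_{\alpha},p)$, and for every $y\in B(y_0,r_0/2)$ the point obtained by moving radially outward from $y$ a distance $r_0/2$ still lies in $\{f\ge f(x)\}$, while Assumption~\ref{assm:super} forces such an outward move to multiply $f$ by a factor $\le e^{-Nr_0/2}$ for any prescribed $N$ once $\|x\|$ is large; fixing $N$ with $e^{-sNr_0/2}\le\tfrac12$ then gives $f(y)\ge f(x)e^{Nr_0/2}$ and hence $V(y)/V(x)\le\tfrac12$ on $B(y_0,r_0/2)$ for $\|x\|$ large, so — as $q(x,\cdot)$ is bounded below on this ball, which sits within bounded distance of $x$ — the $A(x)$-integral is at least a fixed $\kappa>0$. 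On $A(x)^{c}$ the integrand is negative with absolute value at most $q(x,y)\,(f(y)/f(x))^{1-s}$, and here Assumption~\ref{assm:super} enters again: as $\|x\|\to\infty$ the shell $\{y:\epsilon f(x)<f(y)<f(x)\}$ about $\partial\overline{A(x)}$ becomes vanishingly thin, hence carries vanishing $q(x,\cdot)$-mass (the shell is thin and $q(x,\cdot)$ is tight), while on the rest of $A(x)^{c}$ one has $(f(y)/f(x))^{1-s}\le\epsilon^{1-s}$; choosing $s$, then $\epsilon$, then $M$ suitably makes the $A(x)^{c}$ contribution smaller than $\kappa/2$, so $1-PV(x)/V(x)\ge\kappa/2$ for $\|x\|>M$, i.e.\ the drift with $\lambda=1-\kappa/2$.

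The principal obstacle is the $A(x)^{c}$ estimate: converting \eqref{superexp}, a statement about radial log-derivatives, into quantitative and genuinely multidimensional control of the level sets of $f$ and of the ratio $f(y)/f(x)$ off a thin shell, uniformly in $x$, and then balancing the resulting small quantity against the fixed inward contribution of the cone; the same multidimensional sharpening of \eqref{superexp} also powers the outward-move argument on $A(x)$. This is the step that extends the Jarner--Hansen technique \cite{jarn:hans:2000}, which extracted the analogous geometry from the stronger curvature condition~\eqref{eq:curvature}. A secondary, more elementary obstacle is the geometry linking Assumption~\ref{assm:cone} to a usable lower bound on proposals emitted from $x$ — exhibiting the fixed-radius ball inside $W^K_{\epsilon_{\alpha}}(y^\star)$ and, crucially, checking that it lies within bounded distance of $x$ (note that Assumption~\ref{assm:super} already forces $x\in\partial\overline{A(x)}$ for $\|x\|$ large, so the cone should be anchorable at or near $x$); here the quantitative restrictions $0<\alpha<\cos^{-1}(7/8)$ and $\epsilon_{\alpha}<\tilde{\epsilon}_{\alpha}=(8-8\cos\alpha)^{1/2}$ from~\eqref{eps:alpha:defn} are presumably exactly what make these containments go through.
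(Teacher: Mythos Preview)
Your proposal is correct in outline but considerably more elaborate than needed, and it misses the simplification that makes the paper's proof essentially one line. The key point is that Jarner--Hansen's asymptotic identity
\[
\limsup_{\|x\|\to\infty}\frac{PV(x)}{V(x)}=\limsup_{\|x\|\to\infty}Q(x,R(x))
\]
for $V=f^{-1/2}$ uses \emph{only} the superexponential Assumption~\ref{assm:super}; the curvature condition in \cite{jarn:hans:2000} was used solely to manufacture a cone inside $A(x)$, which is now supplied directly by Assumption~\ref{assm:cone}. So there is no need to redo the thin-shell analysis on $A(x)^c$ or to tune a free exponent $s$: one simply cites the Jarner--Hansen identity, observes $Q(x,R(x))=1-Q(x,A(x))\le 1-Q(x,W^K_{\epsilon_\alpha}(x))$, and then notes that because $q(x,y)=q(\|x-y\|)$ and the cone $W^K_{\epsilon_\alpha}(x)$ has fixed shape (only rotated and translated with $x$), the quantity $Q(x,W^K_{\epsilon_\alpha}(x))=Q(0,W^K_{\epsilon_\alpha}(0))$ is a positive constant independent of $x$. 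The drift follows immediately with $\lambda=1-Q(0,W(0))$; the minorization is standard. Your ball-in-cone plus outward-move argument and your $A(x)^c$ shell estimate are not wrong---they are essentially reproving the Jarner--Hansen limit---but they are superfluous once that limit is quoted.

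One genuine subtlety you flag correctly: Assumption~\ref{assm:cone} as written only guarantees the cone at \emph{some} boundary point $y^\star\in\partial\overline{A(x)}$, yet both your argument (for the $q(x,\cdot)$-mass lower bound) and the paper's (for the constancy of $Q(x,W(x))$) require the cone to be anchored at $x$ itself. The paper silently takes $y^\star=x$, which is legitimate since superexponentiality forces $\nabla f(x)\ne 0$ for large $\|x\|$, so $x\in\partial\overline{A(x)}$; but strictly speaking this reads Assumption~\ref{assm:cone} as ``one may take $y=x$'' rather than ``there exists some $y$''. Your hedge on this point is well placed. Finally, your parenthetical that one could equivalently work with $V(x)=e^{s\|x\|}$ is not quite right here: the clean Jarner--Hansen identity above is specific to $V=f^{-s}$, and switching to $e^{s\|x\|}$ would genuinely require the more laborious direct estimates you sketch.
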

\begin{proof}
     A proof is provided in Appendix~\ref{app:GE_RWMH}, but the
argument is essentially a refinement of previous work \cite[][Theorem
4.3]{jarn:hans:2000}.
\end{proof}
Theorem~\ref{thm:rwmh_ge} extends prior results~\citep{jarn:hans:2000} by considering a more general assumptions. Directly checking Assumption~\ref{assm:cone} can be challenging. There are sufficient conditions which may be easier to verify in some applications. 

\begin{proposition}
\label{prop:key}
Let Assumption~\ref{assm:super} hold.  If (i) the
curvature condition~\eqref{eq:curvature} holds or (ii) $f$ is
log-concave, then Assumption~\ref{assm:cone} holds.
\end{proposition}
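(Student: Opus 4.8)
The plan is to show that in each of the two cases, the certain-acceptance region $A(x) = \{y : f(y) \ge f(x)\}$ contains, near its boundary, a cone of the form $W^K_{\e_\alpha}(y)$ opening ``inward'' toward higher density, and that this cone can be anchored at a boundary point $y \in \partial\overline{A(x)}$ for every $x$ with $\|x\|$ large. The geometric content is that the curvature and log-concavity hypotheses each force the boundary hypersurface $\partial\overline{A(x)} = \{y : f(y) = f(x)\}$ to curve away from the origin uniformly, so that a fixed small half-angle $\alpha$ (and hence a fixed $\e_\alpha < \tilde\e_\alpha$) suffices to fit the cone inside $A(x)$.

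First, case (ii), log-concavity, which I expect to be the cleaner of the two. If $f$ is log-concave then $A(x)$ is a convex set (a superlevel set of a concave function $\log f$), and by Assumption~\ref{assm:super} it is bounded for every $x$ (superexponential light tails force the superlevel sets to be compact). Pick $y$ to be the point of $\partial\overline{A(x)}$ closest to the origin; then the supporting hyperplane to the convex set $A(x)$ at $y$ has outward normal pointing away from the origin, i.e.\ in the direction $-y/\|y\|$ (up to the usual caveats when $0 \in A(x)$, which holds once $\|x\|$ is large since then $f(x)$ is small and the origin, or any fixed compact set, lies in $A(x)$). Consequently the entire half-space $\{z : \langle z - y, y/\|y\|\rangle \le 0\}$ meets $A(x)$ in a set containing a neighborhood of $y$ within $A(x)$; more to the point, the ray from $y$ in the direction $-y/\|y\|$ enters $A(x)$, and by convexity so does a small cone of rays around it. I would make this quantitative: since $A(x)$ is convex and contains a fixed ball $B(0,r_0)$ for all large $\|x\|$, the cone with apex $y$ subtending $B(0, r_0)$ is contained in $A(x)$; its half-angle is bounded below by $\arcsin(r_0/\|y\|)$, which $\to 0$, so this crude cone is too narrow. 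The fix is to translate the apex: the cone $W^K_{\e_\alpha}(y)$ has apex at $y$ but only needs length $K$, so I instead use convexity of $A(x)$ together with the supporting-hyperplane fact to show that for a point $y' = y - \eta\, y/\|y\|$ slightly inside, a cone of fixed half-angle $\alpha$ and length $K$ around the inward normal direction fits inside $A(x)$ once $\|x\|$ (hence $\|y\|$, hence the ``flatness'' of $\partial A(x)$ near $y$) is large — and then observe that $W^K_{\e_\alpha}$ as defined only constrains $\xi$ to lie within $\e_\alpha/2$ of $x/\|x\|$, not of $y/\|y\|$; since $y$ is a positive multiple of... no: one must check $y/\|y\|$ and $x/\|x\|$ are close. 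This is the point that needs care: one shows $y$, the nearest boundary point, lies roughly in the direction $x/\|x\|$ because the superlevel set shrinks toward the mode along every ray — concretely, $\langle y/\|y\|, x/\|x\|\rangle \to 1$ as $\|x\|\to\infty$ — so that the half-angle slack $\e_\alpha/2$ absorbs the discrepancy.

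For case (i), the curvature condition, the set $A(x)$ need not be convex, so I would argue locally. The curvature condition \eqref{eq:curvature} states that for all $z$ with $\|z\|$ large, $\langle z/\|z\|, \nabla f(z)/\|\nabla f(z)\|\rangle \le -\delta < 0$ for some fixed $\delta$. The boundary $\partial\overline{A(x)} = \{z : f(z) = f(x)\}$ has outward normal $-\nabla f(z)/\|\nabla f(z)\|$ (pointing toward lower $f$), and the curvature condition says this outward normal makes an angle bounded away from $\pi/2$ with the inward radial direction $-z/\|z\|$; equivalently, the \emph{inward} normal $\nabla f(z)/\|\nabla f(z)\|$ has $\langle z/\|z\|, \nabla f(z)/\|\nabla f(z)\|\rangle \le -\delta$, so moving from a boundary point $y$ a distance $t$ in the direction $-y/\|y\|$ strictly increases $f$ to first order, uniformly. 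Choosing $y \in \partial\overline{A(x)}$ (again the existence of such a boundary point for all large $\|x\|$ follows from superexponential light tails making $\overline{A(x)}$ compact with nonempty interior containing the mode), I would Taylor-expand $\log f$ along rays emanating from $y$ with direction $\xi$ close to $-y/\|y\|$: for $\xi$ within a fixed small angle, the directional derivative $\langle \xi, \nabla\log f(y + s\xi)\rangle$ stays positive for $s \in (0,K)$ provided $K$ is chosen small enough relative to the modulus of continuity of $\nabla\log f$ and the uniform bound $\delta$ — here one uses continuous differentiability of $f$ plus the fact that the relevant region $\{y : \|y\| > M_p\}$ can be handled by the $\limsup$ being $< 0$, possibly after shrinking $K$ and $\alpha$. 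The main obstacle in both cases is the same: controlling the constants \emph{uniformly in $x$} — one must verify that the half-angle $\alpha$, the length $K$, and the threshold $M_p$ can be chosen once and for all, independent of $x$, which forces the argument to rely on the uniform ($\limsup$-type) nature of the hypotheses rather than pointwise curvature, and to check that the cone's axis constraint (closeness of $\xi$ to $x/\|x\|$) is met, which reduces to showing the nearest/chosen boundary point $y$ is asymptotically radial, $\langle y/\|y\|, x/\|x\|\rangle \to 1$.
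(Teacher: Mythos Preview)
You have a misreading of the cone definition that propagates through both cases and creates an unnecessary obstacle. In Assumption~\ref{assm:cone} the cone is $W^K_{\epsilon_\alpha}(y)$, and substituting $y$ for the free variable in \eqref{eq:cone} gives axis direction $y/\|y\|$, not $x/\|x\|$. So there is nothing to check about $\langle y/\|y\|,\, x/\|x\|\rangle\to 1$. In fact the simplest choice throughout is $y=x$ itself: $x\in\partial\overline{A(x)}$ whenever $x$ is not a local minimum of $f$ (guaranteed for large $\|x\|$ by Assumption~\ref{assm:super}), and then the cone axis is $x/\|x\|$ automatically. With that choice your sketch for case~(i) is essentially correct and matches the paper's (and Jarner--Hansen's) argument: the curvature condition gives $\langle \xi,\nabla\log f\rangle<0$ uniformly for $\xi$ in a fixed neighbourhood of $x/\|x\|$, and integrating along the ray shows $f$ increases, so the fixed-angle cone sits inside $A(x)$.

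For case~(ii) your direct construction has a genuine gap. You correctly observe that the cone with apex $y$ subtending a fixed ball $B(0,r_0)\subset A(x)$ has half-angle $\arcsin(r_0/\|y\|)\to 0$, so this does not give the required \emph{fixed} $\epsilon_\alpha$. Your proposed fix, translating the apex to $y'=y-\eta\,y/\|y\|$, fails because Assumption~\ref{assm:cone} requires the apex to lie on $\partial\overline{A(x)}$, and $y'$ is strictly interior. The supporting-hyperplane observation alone does not produce a cone \emph{inside} $A(x)$ of uniform angle; it only bounds $A(x)$ from outside. The paper takes a different route here: after noting that log-concavity makes each $A(x)$ convex with nonempty interior (Lemma~\ref{coneconvexlemma}), it argues by contradiction (Lemma~\ref{lemmingway}) that if the maximal admissible cone angle shrank to zero along a sequence $\|x_n\|\to\infty$, one could exhibit two points in $A(x_n)$ whose segment leaves $A(x_n)$, contradicting convexity; the nesting $A(x_n)\subset A(x_{n+1})$ (from superexponential decay) rules out the degenerate case where $A(x)$ collapses to a lower-dimensional set. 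This contradiction argument is the missing idea in your case~(ii).
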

\begin{proof}
         See Appendix~\ref{app:key}.
\end{proof}

The following is a stronger version of Assumption~\ref{assm:super}, which will be used below to establish Assumption~\ref{assm:cone} with an explicit value for $M_p$  and $\epsilon_{\alpha}$.
\begin{assumption}
\label{assm:super'}
Suppose $f_s(\cdot)$ is continuous and strictly increasing and $C_1 \in \mathbb{R}$ is such that there exists $M_s\ge 0$ so that for any $x\in \mathbb{R}^p$ with $\| x\| >M_s$, 
\begin{align}\label{super:rate}
\left\langle \frac{x}{\Vert x\Vert}, \nabla \log f(x)\right\rangle \le C_1-f_s(\Vert x\Vert).
\end{align}
\end{assumption}

\begin{proposition}
\label{prop:curvature1}
Let Assumption~\ref{assm:super'} hold. If there exists $M'_p>0$ and $\eta \in (0,1)$ such that for any $x\in \mathbb{R}^p$ with $\| x \| > M'_p$, 
\begin{align*}
f_s(\Vert x\Vert)-C_1\ge \eta \Vert \nabla \log f(x)\Vert ,
\end{align*}  
then Assumption~\ref{assm:cone} is satisfied with $M_p=M'_p$ and $\epsilon_{\alpha}=\eta$.
\end{proposition}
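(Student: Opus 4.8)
The plan is to verify Assumption~\ref{assm:cone} at a single, explicitly chosen boundary point $y$, and to reduce the containment $W^K_{\epsilon_\alpha}(y)\subset A(x)$ to a pointwise inequality on $\nabla\log f$ via the fundamental theorem of calculus along radial-type segments. First note that the choice $\epsilon_\alpha=\eta$ is admissible: since $\eta\in(0,1)$ and $\tilde{\epsilon}_{\alpha}=(8-8\cos\alpha)^{1/2}$ sweeps $(0,1)$ as $\alpha\uparrow\cos^{-1}(7/8)$, fix any $\alpha$ with $\tilde{\epsilon}_{\alpha}>\eta$. For $\norm{x}>M'_p$ take $y$ to be a maximiser of $\norm{\cdot}$ over the superlevel set $\overline{A(x)}=\{f\ge f(x)\}$ (compact, since Assumption~\ref{assm:super'} makes $f$ superexponentially light); such a $y$ lies on $\partial\overline{A(x)}$, satisfies $f(y)=f(x)$, and has $\norm{y}\ge\norm{x}>M'_p$. (One could equally take $y=x$, which lies on $\partial\overline{A(x)}$ whenever $\nabla f(x)\ne 0$, and the latter holds for $\norm{x}$ large by Assumption~\ref{assm:super'}.) It then remains to show $f(z)\ge f(x)=f(y)$ for every $z=y-a\xi$ with $0<a<K$, $\xi\in\mathbb{S}^{p-1}$, $\norm{\xi-y/\norm{y}}\le\eta/2$. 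Parametrising the segment from $y$ to $z$ by $w(s)=y-s\xi$, $s\in[0,a]$,
\[
\log f(z)-\log f(y)=-\int_0^a\ip{\xi}{\nabla\log f(w(s))}\,ds,
\]
so it suffices to prove $\ip{\xi}{\nabla\log f(w(s))}\le 0$ for all $s\in[0,a]$.

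Fix $s$ and put $w=w(s)$; since $\norm{w}\ge\norm{y}-s>\norm{y}-K$, after enlarging $M'_p$ if necessary so that it dominates $M_s$ together with a quantity of order $K$, we have $\norm{w}>\max\{M_s,M'_p\}$, so both Assumption~\ref{assm:super'} and the proposition's hypothesis apply at $w$. Decomposing $\xi=\tfrac{w}{\norm{w}}+\big(\xi-\tfrac{w}{\norm{w}}\big)$ and using Cauchy--Schwarz together with these two facts,
\begin{align*}
\ip{\xi}{\nabla\log f(w)}&=\ip{\tfrac{w}{\norm{w}}}{\nabla\log f(w)}+\ip{\xi-\tfrac{w}{\norm{w}}}{\nabla\log f(w)}\\
&\le\big(C_1-f_s(\norm{w})\big)+\Big\lVert\xi-\tfrac{w}{\norm{w}}\Big\rVert\,\norm{\nabla\log f(w)}\\
&\le\Big(\Big\lVert\xi-\tfrac{w}{\norm{w}}\Big\rVert-\eta\Big)\norm{\nabla\log f(w)},
\end{align*}
which is nonpositive as soon as $\lVert\xi-\tfrac{w}{\norm{w}}\rVert\le\eta$.

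It thus remains to control the deviation of the radial direction along the segment. By the triangle inequality and the cone constraint,
\[
\Big\lVert\xi-\tfrac{w}{\norm{w}}\Big\rVert\le\Big\lVert\xi-\tfrac{y}{\norm{y}}\Big\rVert+\Big\lVert\tfrac{y}{\norm{y}}-\tfrac{w}{\norm{w}}\Big\rVert\le\tfrac{\eta}{2}+\Big\lVert\tfrac{y}{\norm{y}}-\tfrac{w}{\norm{w}}\Big\rVert .
\]
Writing $\xi=\tfrac{y}{\norm{y}}+v$ with $\norm{v}\le\eta/2$, one has $w=(\norm{y}-s)\tfrac{y}{\norm{y}}-sv$, and an elementary estimate (bounding $\big|\norm{w}-(\norm{y}-s)\big|\le s\norm{v}$) gives $\lVert\tfrac{y}{\norm{y}}-\tfrac{w}{\norm{w}}\rVert\le\tfrac{2s\norm{v}}{\norm{w}}\le\tfrac{K\eta}{\norm{y}-K}$, which is at most $\eta/2$ once $\norm{y}\ge 3K$. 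Since $\norm{y}\ge\norm{x}$, all size requirements on $\norm{w}$ and $\norm{y}$ hold for $\norm{x}$ exceeding a threshold of the form $\max\{M'_p,M_s,3K\}$ plus a shift of order $K$; absorbing this shift into $M'_p$ (consistent with $K$ being a small fixed constant) yields Assumption~\ref{assm:cone} with $M_p=M'_p$ and $\epsilon_\alpha=\eta$.

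The one genuinely delicate point is the last paragraph: one must notice that the factor $\tfrac12$ in the definition~\eqref{eq:cone} of $W^K_{\epsilon_\alpha}$ is precisely what lets the $\eta/2$ opening of the cone plus the $\le\eta/2$ drift of $w/\norm{w}$ away from $y/\norm{y}$ (over a segment of length $<K$) combine into a total deviation of at most $\eta$, which is exactly the bound permitted by the gradient estimate of the middle paragraph. Everything else is routine, aside from bookkeeping the lower bound on $\norm{x}$ needed so that every point of the relevant segments still satisfies Assumption~\ref{assm:super'} and the hypothesis.
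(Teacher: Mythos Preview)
Your proof is correct, but it takes a markedly different route from the paper. The paper's argument is three lines: divide Assumption~\ref{assm:super'} through by $\Vert\nabla\log f(x)\Vert$ and combine with the hypothesis to get
\[
\left\langle \frac{x}{\Vert x\Vert},\frac{\nabla f(x)}{\Vert\nabla f(x)\Vert}\right\rangle=\left\langle \frac{x}{\Vert x\Vert},\frac{\nabla\log f(x)}{\Vert\nabla\log f(x)\Vert}\right\rangle\le\frac{C_1-f_s(\Vert x\Vert)}{\Vert\nabla\log f(x)\Vert}\le-\eta
\]
for $\Vert x\Vert>M'_p$, which is precisely the quantitative curvature condition. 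The conclusion then follows by invoking Proposition~\ref{prop:key}(i). You instead bypass Proposition~\ref{prop:key} entirely and verify Assumption~\ref{assm:cone} from scratch: choose a boundary point $y$, parametrise segments into the cone, and show monotonicity of $\log f$ along them by splitting $\xi$ into the radial part (controlled by Assumption~\ref{assm:super'}) and the transverse part (controlled by Cauchy--Schwarz and the hypothesis). This is essentially a self-contained re-proof of the relevant half of Proposition~\ref{prop:key}, done with considerably more care than the paper provides there. What the paper's route buys is brevity and modularity; what yours buys is that the cone containment $W^K_{\epsilon_\alpha}(y)\subset A(x)$ is actually demonstrated rather than inherited from a sketchy earlier argument. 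Both approaches share the same minor blemish: the exact claim ``$M_p=M'_p$'' requires absorbing an additive shift of order $K$ (and $M_s$) into $M'_p$, which you acknowledge explicitly and the paper handles in the proof of Proposition~\ref{prop:key}(i) by the remark ``choose new $M_p$ as $M_p+K$.''
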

\begin{proof}
    See Appendix~\ref{app:curvature1}.
\end{proof}
\subsection{Application to Exponential Family Models}
Suppose $X_1,X_2,\ldots,X_n$ are conditionally independent given $\theta$ with density  characterized by
\[
p(x_i\mid \theta) \propto \exp\left\{\langle x_i, \theta \rangle -c(\theta)\right\} .
\]
A function $g(\theta)$ is $m$-strongly convex if $\nabla^2 g(\theta)\ge m\, I$ for all $\theta \in \mathbb{R}^p$ and some $m > 0$. Assume the prior on $\theta$ is
\[
\nu(\theta)\propto \exp\left\{ -g(\theta)\right\}
\]
with $g(\theta)$ strongly convex.
Note that $\nu(\theta)$ is proper due to the strong convexity of $g(\theta)$.
Letting $X = \{ X_1,X_2,\ldots,X_n \}$, the posterior is characterized by
\begin{equation}
\label{expfamposterior}
f(\theta \mid X ) \propto \exp \left \{ \sum_{i=1}^{n} \langle x_i, \theta \rangle - n c(\theta)-g(\theta) \right\} .
\end{equation}

\begin{theorem}\label{expfam:ergdocitythm}
    The RWMH algorithm with stationary distribution defined by the density $f(\theta \mid X)$ is geometrically ergodic.    
\end{theorem}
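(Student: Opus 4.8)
The plan is to verify that the posterior density $f(\theta\mid X)$ from~\eqref{expfamposterior} satisfies Assumptions~\ref{assm:super'} and then the hypothesis of Proposition~\ref{prop:curvature1}, so that Assumptions~\ref{assm:super} and~\ref{assm:cone} hold and Theorem~\ref{thm:rwmh_ge} applies. Writing $\log f(\theta\mid X) = \sum_{i=1}^n \langle x_i,\theta\rangle - n c(\theta) - g(\theta) + \text{const}$, we have $\nabla \log f(\theta\mid X) = \bar{x} - n\nabla c(\theta) - \nabla g(\theta)$ where $\bar x = \sum_i x_i$. The key structural facts are: (i) $c(\theta)$ is convex (it is a cumulant generating function of an exponential family), so $\nabla^2 c \succeq 0$; and (ii) $g$ is $m$-strongly convex, so $\nabla^2 g \succeq mI$. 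First I would use convexity to control the inner product $\langle \theta/\|\theta\|, \nabla\log f(\theta\mid X)\rangle$.

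The core computation: for any convex differentiable $h$, monotonicity of the gradient gives $\langle \theta - \theta_0, \nabla h(\theta) - \nabla h(\theta_0)\rangle \ge 0$, and for $m$-strongly convex $h$ this improves to $\langle \theta-\theta_0,\nabla h(\theta)-\nabla h(\theta_0)\rangle \ge m\|\theta-\theta_0\|^2$. Applying this with $h = nc + g$ and a fixed reference point $\theta_0$ (say $\theta_0 = 0$), we get
\[
\langle \theta, n\nabla c(\theta) + \nabla g(\theta)\rangle \ge m\|\theta\|^2 + \langle \theta, n\nabla c(0) + \nabla g(0)\rangle \ge m\|\theta\|^2 - \|\theta\|\, \|n\nabla c(0)+\nabla g(0)\|.
\]
Hence
\[
\left\langle \frac{\theta}{\|\theta\|}, \nabla \log f(\theta\mid X)\right\rangle
= \frac{\langle \theta, \bar x\rangle}{\|\theta\|} - \frac{\langle\theta, n\nabla c(\theta)+\nabla g(\theta)\rangle}{\|\theta\|}
\le \|\bar x\| + \|n\nabla c(0)+\nabla g(0)\| - m\|\theta\|.
\]
This is exactly~\eqref{super:rate} with $f_s(t) = mt$, $C_1 = \|\bar x\| + \|n\nabla c(0) + \nabla g(0)\|$, and any $M_s \ge 0$; in particular $f_s$ is continuous and strictly increasing, and since $f_s(\|\theta\|) - C_1 \to \infty$ this also yields Assumption~\ref{assm:super}.

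It remains to check the hypothesis of Proposition~\ref{prop:curvature1}: that $m\|\theta\| - C_1 \ge \eta\|\nabla\log f(\theta\mid X)\|$ for $\|\theta\|$ large, for some $\eta\in(0,1)$. This requires an upper bound on $\|\nabla\log f(\theta\mid X)\| = \|\bar x - n\nabla c(\theta) - \nabla g(\theta)\|$ that grows no faster than linearly in $\|\theta\|$ with slope strictly less than $m$ — this is the step I expect to be the main obstacle, since it needs a matching \emph{upper} bound on the gradient rather than just a lower bound on the directional derivative. It will use the additional assumptions implicit in this exponential-family setup (e.g. that $c$ has a Lipschitz gradient, i.e. $\nabla^2 c \preceq L I$, and similarly for $g$, or that the design is such that $\nabla c$ is bounded as for Bernoulli/Poisson-type models); then $\|\nabla\log f\| \le \|\bar x\| + n\|\nabla c(\theta)\| + \|\nabla g(\theta)\| \le \|\bar x\| + n\|\nabla c(0)\| + (nL_c + L_g)\|\theta\| + \|\nabla g(0)\|$, and choosing $\eta < m/(nL_c+L_g)$ (assuming $m > nL_c + L_g$, or restricting to regimes where the bound closes — this is where care with the precise model assumptions of Section~\ref{sec:GE_RWMH}'s exponential-family subsection is needed) makes the inequality hold for all sufficiently large $\|\theta\|$, giving an explicit $M'_p$. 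With that, Proposition~\ref{prop:curvature1} delivers Assumption~\ref{assm:cone} with explicit $M_p = M'_p$ and $\epsilon_\alpha = \eta$, and Theorem~\ref{thm:rwmh_ge} then gives geometric ergodicity.
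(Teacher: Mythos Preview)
Your verification of the superexponential decay (Assumption~\ref{assm:super}) is correct and matches the paper's computation essentially line-for-line: both use the gradient monotonicity from convexity of $c$ and $m$-strong convexity of $g$ to get
\[
\left\langle \frac{\theta}{\|\theta\|}, \nabla \log f(\theta\mid X)\right\rangle \le C_1 - m\|\theta\|.
\]

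The gap is in how you obtain Assumption~\ref{assm:cone}. You route through Proposition~\ref{prop:curvature1}, which requires $m\|\theta\| - C_1 \ge \eta\|\nabla\log f(\theta\mid X)\|$ for large $\|\theta\|$. As you note, this needs an \emph{upper} bound on $\|\nabla\log f\|$ that grows at most linearly in $\|\theta\|$ with slope strictly less than $m/\eta$. But the theorem's hypotheses do not provide any such bound: only strong convexity of $g$ is assumed, and there is no Lipschitz condition on $\nabla c$ or $\nabla g$. For a general exponential family $c(\theta)$ can have $\|\nabla c(\theta)\|$ growing superlinearly (e.g.\ Gaussian variance components, or the Poisson regression models motivating the paper, where $\nabla c$ grows exponentially), so the inequality you need simply fails in general. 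Your parenthetical ``assuming $m > nL_c + L_g$'' is exactly the extra hypothesis that would be required and is not available here.

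The paper instead observes that $-\nabla^2\log f(\theta\mid X) = n\nabla^2 c(\theta) + \nabla^2 g(\theta) \succeq mI > 0$, so the posterior is log-concave. It then invokes part~(ii) of Proposition~\ref{prop:key}, which says that superexponential decay together with log-concavity already gives Assumption~\ref{assm:cone}. This bypasses the curvature condition entirely and needs no control on the size of $\nabla c$ or $\nabla g$. That is the missing idea in your proposal.
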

\begin{proof}
The proof of Theorem~\ref{expfam:ergdocitythm} follows by checking the conditions of Theorem~\ref{thm:rwmh_ge} and Proposition~\ref{prop:key}, which is done in Appendix~\ref{app:proof of expfam:ergdocitythm}.
\end{proof}

\section{Constraining the Geometric Rate}
\label{sec:main_results}

Theorem~\ref{thm:rwmh_ge} and other results on geometric ergodicity of multidimensional RWMH \cite{jarn:hans:2000, meng:twee:1996, robe:twee:1996} have focused on establishing the existence of $\rho$, but provide no constraints on its value, which is the focus here. Two additional assumptions are required to proceed.

\begin{assumption}
\label{assm:envelope}
    There exists a function $\tilde f:\mathbb{R}_+\to \mathbb{R}_{+}$, non-decreasing, such that, for all $x$,
    \begin{align*}
        \left|\log f(x)\right| \le \tilde f(\left\|x\right\|).
    \end{align*}
\end{assumption}


\begin{assumption}
\label{assm:prop}
    The proposal density satisfies $q(\|x\|) < q(\|y\|)$ when $\|x\|>\|y\|$.
\end{assumption}

Some preliminary notation is required in order to state the main result of this section cleanly.  Recall that $\epsilon_{\alpha}$ was previously defined~\eqref{eq:cone}.  If $K < 1/3$ and 
\begin{align}\label{eq:geomradius}
R_{\alpha}=K\left(1-\epsilon_{\alpha}^2/8\right)\left[1-\left(1-\epsilon_{\alpha}^2/8\right)^2\right]^{1/2}\left\{1+\left[1-\left(1-\epsilon_{\alpha}^2/8\right)^2\right]^{1/2}\right\}^{-1},
\end{align}
define 
\begin{align}\label{eq:driftcoeff}
\tilde{\lambda}=1-\frac{1}{1+\epsilon_{\alpha}}Q\left(0,\left[-\frac{R_{\alpha}}{\sqrt{p}}-1,\frac{R_{\alpha}}{\sqrt{p}}-1\right]\times \left[-\frac{R_{\alpha}}{\sqrt{p}},\frac{R_{\alpha}}{\sqrt{p}}\right]^{p-1}\right),
\end{align}
and  
let \(C_B(p)\) denote the volume of the unit ball $\{x:\|x\|\le 1\}$ of dimension $p$.

Let
\begin{align}\label{delta:defn}
    \delta <\frac{\epsilon}{q(0)\,C_B(p)\, K^{p-1}_{\epsilon}},
\end{align}
 $p^*=\max f(x)$, $M^*=\max\{M_p,M_s\}$,
 \begin{align}\label{eps_def_1}
    \epsilon\le \frac{1}{8}\frac{\epsilon_{\alpha}}{1+\epsilon_{\alpha}}\,Q\left(0,\left[-\frac{R_{\alpha}}{\sqrt{p}}-1,\frac{R_{\alpha}}{\sqrt{p}}-1\right]\times \left[-\frac{R_{\alpha}}{\sqrt{p}},\frac{R_{\alpha}}{\sqrt{p}}\right]^{p-1}\right),
\end{align}
and
\begin{equation}\label{R:eps:def_1}
\begin{split}
\mathcal{R}_{\epsilon}&=\max\left\{f_s^{-1}\left(C_1-\frac{1}{\delta}\log\epsilon\right)+K_{\epsilon},\ 2K^2_{\epsilon}\left[\left(\frac{\epsilon}{q(0)\, C_B(p) \delta}\right)^{1/p-1} -K_{\epsilon}\right]^{-1}+K_{\epsilon}, \right. \\
& \left. \quad \quad \quad f_s^{-1}\left(C_1-\log \left(\exp\left\{-\tilde f\left(f_s^{-1}\left(C_1\right)\vee M^*\right)\right\}/p^*\wedge 1\right)\right)+1, \ M^*+K_{\epsilon}\vee 1\right\}.
\end{split}
\end{equation}
Set
\begin{align}\label{drift:const}
    b=3\exp\{\tilde{f}(\mathcal{R}_{\epsilon})/2\},
\end{align}
\begin{equation}\label{A:defn}
	 A=1+\frac{4\tilde\lambda b+2\tilde\lambda\epsilon_{\alpha}}{1-\tilde{\lambda}}+2b=\frac{2\tilde\lambda\left(b+\epsilon_{\alpha}\right)+2b}{1-\tilde{\lambda}},
\end{equation}
\begin{equation}\label{tilde_alpha:defn}
 \tilde\alpha=\left(1+\frac{2b+\epsilon_{\alpha}}{1-\tilde{\lambda}}\right)\left[1+2b+\frac{\tilde{\lambda}}{1-\tilde{\lambda}}\left(2b+\epsilon_{\alpha}\right)\right]^{-1},
\end{equation}
and
\begin{align}\label{R_1:def_1}
  \mathcal{R}_{\max}=f_s^{-1}\left(C_1-\log\left(\frac{(1-\tilde\lambda)^2}{2\,p^* \,(2b+\epsilon_{\alpha})^2}\wedge 1\right)\right)\vee M^* .
\end{align}

\begin{theorem}
\label{thm:main_result}
Suppose Assumptions~\ref{assm:cone}--\ref{assm:prop} hold. Then for any $\epsilon$ as defined in \eqref{eps_def_1}, 
\[
\tilde{\eta}=\exp\{-2 \,\tilde{f}(\mathcal{R}_{\max})\} q(\mathcal{R}_{\max}) \,\mu^{Leb}(B(0,\mathcal{R}_{\max})),
\]
and
 $b=3\exp\{\tilde{f}(\mathcal{R}_{\epsilon})/2\}$, the RWMH is $(M, \rho)$-geometrically ergodic; where for all $x \in \mathbb{R}^p$,
\begin{equation*}
M(x) \le \left(2+\frac{b}{1-\tilde \lambda} + \frac{1}{\sqrt{f(x)}}\right)
\end{equation*}
and for $0 < r < 1$
\begin{equation}
\label{eq:constraints}
1 -  \inf_{x \in \mathbb{R}^p} \left[ \int_{\mathbb{R}^p} \alpha(x,y) q(x,y) dy\right] \le \rho \le  \max\left\{\left(1-\tilde\eta\right)^{r},\tilde \alpha^{-(1-r)} A^r\right\}. 
\end{equation}
\end{theorem}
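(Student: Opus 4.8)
The plan is to derive an \emph{explicit} geometric drift condition and an \emph{explicit} one-step minorization for the RWMH kernel $P$ on a ball, and then feed these two ingredients into an off-the-shelf quantitative convergence theorem of Rosenthal/Baxendale type \cite{rose:1995a, brow:jone:2022a}, whose output is precisely a bound $\|P^n(x,\cdot)-F\|_{TV}\le M(x)\rho^n$ with $M$ and $\rho$ expressed through the drift and minorization constants. The lower bound $1-\inf_x\int\alpha(x,y)q(x,y)\,dy\le\rho$ is proved separately and does not use the drift/minorization machinery.

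For the drift step, take the Lyapunov function $V(x)=f(x)^{-1/2}$, the standard choice for RWMH. Writing $h(x,y)=f(y)/f(x)$ and $A(x)=\{y:f(y)\ge f(x)\}$, a direct computation from~\eqref{RWMH:kernel} gives
\[
\frac{PV(x)}{V(x)}=\int_{A(x)}\frac{q(x,y)}{\sqrt{h(x,y)}}\,dy+\int_{A(x)^c}\Bigl(\sqrt{h(x,y)}+1-h(x,y)\Bigr)q(x,y)\,dy .
\]
The first integrand is at most $1$, and strictly less than $1$ wherever $f(y)$ exceeds $f(x)$; the drift is therefore driven by showing that, for $\|x\|>M^*$, a proposal lands with probability bounded below in a region of $A(x)$ where $f$ is appreciably larger than $f(x)$. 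This is where Assumption~\ref{assm:cone} enters: the cone $W^K_{\epsilon_\alpha}(y)\subset A(x)$ it supplies contains a Euclidean ball of radius $R_\alpha$ (the purpose of~\eqref{eq:geomradius}), and inscribing the axis-aligned box of~\eqref{eq:driftcoeff}--\eqref{eps_def_1} inside that ball — hence the $\sqrt p$ factors — lets us lower bound, via Assumption~\ref{assm:prop}, the proposal mass that falls into it. Assumption~\ref{assm:super'}, through~\eqref{super:rate}, controls how far $\log f$ can drop across that box, turning $1/\sqrt{h}<1$ into a quantitative inequality and yielding the explicit coefficient $\tilde\lambda$ of~\eqref{eq:driftcoeff}; Assumption~\ref{assm:envelope} is what makes $b=3\exp\{\tilde f(\mathcal{R}_\epsilon)/2\}$ finite and explicit, and the four terms in~\eqref{R:eps:def_1} are exactly the radii past which each ingredient (tail decay, box-in-ball fit, the envelope, the thresholds $M_p,M_s$) is in force, so that $PV(x)\le\tilde\lambda V(x)+b\,\mathbf{1}_{C}(x)$ holds with small set $C=\{x:\|x\|\le\mathcal{R}_\epsilon\}$.

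For the minorization step, on $B(0,\mathcal{R}_{\max})$ — with $\mathcal{R}_{\max}$ from~\eqref{R_1:def_1} chosen so that this ball contains the sublevel set of $V$ that Rosenthal's theorem requires the small set to contain — bound $\alpha(x,y)q(x,y)$ from below: Assumption~\ref{assm:envelope} gives $h(x,y)\ge\exp\{-2\tilde f(\mathcal{R}_{\max})\}$ and Assumption~\ref{assm:prop} gives the uniform lower bound $q(\mathcal{R}_{\max})$ on $q(x,y)$, so $P(x,\cdot)\ge\tilde\eta\,\nu(\cdot)$ with $\nu$ the normalized restriction of Lebesgue measure to $B(0,\mathcal{R}_{\max})$ and $\tilde\eta=\exp\{-2\tilde f(\mathcal{R}_{\max})\}q(\mathcal{R}_{\max})\mu^{Leb}(B(0,\mathcal{R}_{\max}))$. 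Feeding the pair $(\tilde\lambda,b)$ and $\tilde\eta$ into the quantitative bound of \cite{rose:1995a, brow:jone:2022a} with the free parameter $r\in(0,1)$ then gives $\rho\le\max\{(1-\tilde\eta)^r,\tilde\alpha^{-(1-r)}A^r\}$ and $M(x)\le 2+b/(1-\tilde\lambda)+V(x)$ after simplifying the constants, $A$ and $\tilde\alpha$ of~\eqref{A:defn}--\eqref{tilde_alpha:defn} being exactly the combinations of $b,\tilde\lambda,\epsilon_\alpha$ that theorem produces. For the lower bound, note that the RWMH kernel has an atom at $x$ of size $r(x)=1-\int\alpha(x,y)q(x,y)\,dy$ and the chain can remain at $x$ by rejecting at every step, so $P^n(x,\{x\})\ge r(x)^n$; since $F$ is absolutely continuous, $\|P^n(x,\cdot)-F\|_{TV}\ge r(x)^n$, and comparing with the finite quantity $M(x)\rho^n$ and letting $n\to\infty$ forces $r(x)\le\rho$ for every $x$, hence $1-\inf_x\int\alpha(x,y)q(x,y)\,dy\le\rho$.

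The main obstacle is the drift step: turning the qualitative cone containment of Assumption~\ref{assm:cone} into a genuinely explicit contraction coefficient. One must (i) verify that a ball of radius $R_\alpha$ really is inscribed in $W^K_{\epsilon_\alpha}(y)$ and sits inside $A(x)$, (ii) use~\eqref{super:rate} to bound quantitatively the drop of $\log f$ as a proposal traverses that ball, so the gain over $A(x)$ is numerical rather than merely ``$<1$'', and (iii) control the possibly-larger-than-one contribution $\sqrt h+1-h$ from proposals into $A(x)^c$ so it is dominated. Carrying every constant explicitly through these estimates, including all the box-to-ball $\sqrt p$ conversions, is the delicate part; the minorization and the appeal to the quantitative theorem are then essentially bookkeeping.
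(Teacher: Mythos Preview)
Your proposal is correct and matches the paper's route: drift with $V(x)=f(x)^{-1/2}$ (Proposition~\ref{prop:driftJH}, made explicit through Proposition~\ref{prop2} and Proposition~\ref{prop:R_eps}), minorization on $B(0,\mathcal{R}_{\max})$ (Proposition~\ref{prop:minorJH}), then Rosenthal's Theorem~12 for the upper bound, and the rejection--atom argument (cited as \cite[Theorem~2]{brow:jone:2022a}) for the lower bound. One small misattribution worth noting: the coefficient $\tilde\lambda$ in~\eqref{eq:driftcoeff} depends only on the proposal mass of the inscribed box and not on Assumption~\ref{assm:super'} --- the role of~\eqref{super:rate} in the paper is rather to show that the two residual integrals $\int_{A(x)}(f(x)/f(y))^{1/2}q\,dy$ and $\int_{R(x)}(f(y)/f(x))^{1/2}q\,dy$ are each below $4\epsilon$ once $\|x\|>\mathcal{R}_\epsilon$, which is what makes $\mathcal{R}_\epsilon$ in~\eqref{R:eps:def_1} explicit.
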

\begin{proof}
   The lower bound follows from recent work \cite[][Theorem 2]{brow:jone:2022a}. The upper bound follows from establishing explicit drift and minorization conditions with $V(x)=f(x)^{-1/2}$ as the drift function and $\delta_{x}(\cdot)$ as the starting measure, see Propositions~\ref{prop:driftJH}--\ref{prop:R_eps}, and applying the result of existing general upper bounds \citep[Theorem 12]{rose:1995a}\label{rose:thm:1995a}.
\end{proof}

The lower bound in equation \eqref{eq:constraints} is general, in part because it can be difficult to calculate it explicitly.  However, it often suffices to bound it or approximate it with standard Monte Carlo  methods.

\begin{proposition}
 \label{prop:more_lowerbounds}
 Suppose the RWMH algorithm is $(M, \rho)$-geometrically ergodic.  If there exists $B : \mathbb{R}^{p} \to (0,\infty)$ such that $q(x,y) \le B (x)$, then, for each $x \in \mathbb{R}^{p}$,
 \begin{equation}
\label{eq:more_constraints}
1 - \frac{B(x)}{f(x)} \le \rho.
\end{equation}
Alternatively, if $f$ has at most countably many modes, then for mode $x^*$,
\begin{equation}
\label{eq:even_more_constraints}
1 - \frac{1}{f(x^*)} \int_{\mathbb{R}^p} f(y) q(x^*, y) dy \le \rho.
\end{equation}
\end{proposition}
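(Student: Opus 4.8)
The plan is to obtain both inequalities from the single lower bound already exploited in Theorem~\ref{thm:main_result}: for an $(M,\rho)$-geometrically ergodic RWMH chain, if
\[
r(x)\ :=\ 1-\int_{\R^p}\alpha(x,y)q(x,y)\,dy\ =\ P(x,\{x\})
\]
denotes the probability that the proposal is rejected at $x$, then $\rho\ge r(x)$ for \emph{every} $x\in\R^p$. This is \cite[Theorem 2]{brow:jone:2022a}, and it can also be seen directly: returning to $x$ in $n$ steps is implied by $n$ consecutive rejections, so $P^n(x,\{x\})\ge r(x)^n$, while $F(\{x\})=0$ because $f$ is a Lebesgue density; hence $r(x)^n\le d_{TV}(P^n(x,\cdot),F)\le M(x)\rho^n$ for all $n$, which forces $r(x)\le\rho$. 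With this in hand, each part of the proposition reduces to bounding $\int_{\R^p}\alpha(x,y)q(x,y)\,dy$ from above at a convenient point $x$.

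For \eqref{eq:more_constraints}, fix $x$ and combine $\alpha(x,y)=h(x,y)\wedge 1\le f(y)/f(x)$ with $q(x,y)\le B(x)$ and $\int_{\R^p}f(y)\,dy=1$:
\[
\int_{\R^p}\alpha(x,y)q(x,y)\,dy\ \le\ \frac{B(x)}{f(x)}\int_{\R^p}f(y)\,dy\ =\ \frac{B(x)}{f(x)},
\]
so that $\rho\ge r(x)\ge 1-B(x)/f(x)$. For \eqref{eq:even_more_constraints}, take $x^{\ast}$ to be a (global) mode of $f$, so that $f(y)\le f(x^{\ast})$ for all $y$ and hence $\alpha(x^{\ast},y)=f(y)/f(x^{\ast})$ identically; then
\[
r(x^{\ast})\ =\ 1-\frac{1}{f(x^{\ast})}\int_{\R^p}f(y)q(x^{\ast},y)\,dy,
\]
and $\rho\ge r(x^{\ast})$ gives the claim. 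The countable-modes hypothesis enters only to make the assertion meaningful at each of the (at most countably many) modes, at each of which $F$ places no mass, so that the bound $\rho\ge r(x^{\ast})$ is legitimate and one may take the sharpest of the resulting bounds.

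I do not anticipate a genuine obstacle. The only steps that need a little care are the pointwise domination $\alpha(x,y)\le f(y)/f(x)$ — and, dually, $1-\alpha(x,y)\ge 1-f(y)/f(x)$, which in fact shows the bound in \eqref{eq:even_more_constraints} holds verbatim at an arbitrary $x$ and is merely tightest at a global mode, where the Hastings ratio never exceeds $1$ — together with the use of the normalization $\int f=1$. The real content, and the reason this deserves its own statement, is the observation that the rejection atom of the RWMH kernel at a single, freely chosen point already constrains $\rho$, and that the mode is a choice for which the resulting quantity is either explicit (via the crude bound $q(x,y)\le B(x)$) or a simple $F$-expectation $\int f(y)q(x^{\ast},y)\,dy$ that can be estimated by standard Monte Carlo.
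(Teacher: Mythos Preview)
Your proof is correct and follows essentially the same route as the paper: both invoke the lower bound $\rho\ge 1-\int\alpha(x,y)q(x,y)\,dy$ from \cite[Theorem~2]{brow:jone:2022a} (which the paper accesses via Theorem~\ref{thm:main_result}) and then bound the acceptance integral, using $\alpha(x,y)\le f(y)/f(x)$ together with $q(x,y)\le B(x)$ for \eqref{eq:more_constraints}, and the fact that $\alpha(x^{\ast},y)=f(y)/f(x^{\ast})$ at a global mode for \eqref{eq:even_more_constraints}. Your additional direct argument for $\rho\ge r(x)$ via $P^n(x,\{x\})\ge r(x)^n$ and your remark that \eqref{eq:even_more_constraints} in fact holds (as an inequality) at every $x$ are nice clarifications that go slightly beyond what the paper records.
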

\begin{proof}
         The proof is given in Appendix~\ref{sec:more_lb}.
\end{proof}
\begin{example}
Suppose, for $i=1,\ldots,n$,
\begin{equation*}
Y_i \mid X_i=x_i \stackrel{ind}{\sim} Poi(e^{x^{\mathsf{T}}_i\theta})
\end{equation*}
and if $g(\theta)$ is strongly convex, the prior satisfies $\nu(\theta) \propto  e^{-g(\theta)}$.
The posterior is characterized by
\begin{align}\label{lklihdpois}
    f(\theta \mid y,x) \propto \exp\left\{ \sum_{i=1}^{n} \left(y_i x_i^{\mathsf{T}}\theta-e^{x_i^{\mathsf{T}}\theta} \right) -g(\theta) \right\}.
\end{align} 

Denote by $c(\theta,x_i)=\exp\left(x^{\mathsf{T}}_i\theta\right)$. Notice that
\[
    \nabla^2 c(\theta,x_i)=e^{x_i^{\mathsf{T}}\theta} \, x_ix_i^{\mathsf{T}} \ge 0,
\]
which implies  $c(\theta,x_i)$ is convex. Also, for some for some $m > 0$ 
\[
-\nabla^2 \log f(\theta\mid y,x)=\nabla^2 g(\theta) +\sum_{i=1}^{n} e^{x^{\mathsf{T}}_i\theta}x_ix^{\mathsf{T}}_i >m\,I 
\]
and hence $-\log f(\theta\mid y,x)$ is strongly convex. Thus there exists a unique global minimum, say $\theta^*(y,x)$, which may be computed numerically.

The same argument as the in the proof of Theorem~\ref{expfam:ergdocitythm} will establish that the RWMH chain is geometrically ergodic and Proposition~\ref{prop:more_lowerbounds} will yield lower bounds on the geometric rate of convergence. Specifically, for a bounded proposal density $q(x,y) \le B$ and
\[
1-\frac{B}{f(\theta^*(y,x)\mid y,x)}\le \rho.
\]
As a concrete example suppose the proposal is a Gaussian centered at the current state $x$ with covariance matrix $hI$.  Then
\[
q(x,y) \le \frac{1}{(2 \pi h)^{p/2}}
\]
and hence
\[
1-\frac{1}{f(\theta^*(y,x)\mid y,x) (2 \pi h)^{p/2}}\le \rho.
\]
This suggests choosing the scale $h < 1$ since otherwise it could easily be the case that $\rho \approx 1$ and, even though it is geometrically ergodic, the convergence would be prohibitively slow.
\end{example}

\subsection{Combining Densities}

The target density function $f$ may be the result of combining two other density functions through multiplication or mixing.  More specifically, suppose $f_1$ and $f_2$ are density functions and either $f=f_1 f_2$ such that
\[
\int_{\mathbb{R}^{p}} f_1(x)  f_2 (x) dx =1
\]
or $f=a_1 f_1+a_2f_2$ where $a_1,a_2$ are non-negative real numbers such that $a_1+a_2=1$. It can be convenient to study the convergence properties of RWMH having invariant density $f$ through the properties of $f_1$ and $f_2$. The following result gives conditions for this.  A similar result appeared previously, but was stated without proof under stronger assumptions \cite[][Theorem 4.4]{jarn:hans:2000}. 

\begin{proposition}
\label{prop:class}
Suppose for $i=1,2$, there exists $\tilde{f}_{i}$ so that the density $f_i$ satisfies Assumption~\ref{assm:envelope} and there exists $f_{is}$ and $C^{*}_{i}$ so that $f_i$ satisfies Assumption~\ref{assm:super'}.  In addition, assume that for $0< \eta_i <1$ there is $M^*_{i} > 0$ such that for $|x| > M^*_{i}$ 
\begin{equation}
\label{eq:curvature:exact}
\left\langle\frac{x}{\Vert x\Vert}, \frac{\nabla f_i(x)}{\Vert\nabla f_i(x)\Vert} \right\rangle \le -\eta_i .
\end{equation}
\begin{enumerate}
\item If $f = f_1 f_2$, then $f$ satisfies Assumption~\ref{assm:cone} with $M^*=M^*_{1}\vee M^*_{2}$, Assumption~\ref{assm:envelope} with  $\tilde{f}=\tilde{f}_1+\tilde{f}_2$, and Assumption~\ref{assm:super'} with  $f_s=f_{1s}+f_{2s}$ and $C_1=C^*_1+C^*_2$, and $\eta=\eta_1\wedge\eta_2$.

\item If $f=a_1 f_1 + a_2 f_2$ then $f$ satisfies Assumption~\ref{assm:cone} with $M^*=M^*_{1}\vee M^*_{2}$, Assumption~\ref{assm:envelope} with  $\tilde{f} = \log(\exp(\tilde{f}_{1} + \tilde{f}_{2})+1)$, and Assumption~\ref{assm:super'} with  $f_s = f_{1s} \wedge f_{2s}$ and $C_1 = C^*_1\vee C^*_2$, and  $\eta=\eta_1\wedge\eta_2$.
\end{enumerate}
\end{proposition}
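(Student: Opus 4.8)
The plan is to verify each of the three conclusions (Assumption~\ref{assm:cone}, Assumption~\ref{assm:envelope}, Assumption~\ref{assm:super'}) separately for each of the two cases $f = f_1 f_2$ and $f = a_1 f_1 + a_2 f_2$, leveraging Proposition~\ref{prop:curvature1} to reduce the verification of Assumption~\ref{assm:cone} to an inequality relating $f_s - C_1$ and $\|\nabla \log f\|$, given the hypothesis \eqref{eq:curvature:exact} and Assumption~\ref{assm:super'} on each $f_i$. The overall strategy exploits the fact that all three quantities of interest --- the log-envelope $\tilde{f}$, the radial log-derivative bound $f_s - C_1$, and the gradient of the log --- behave additively (or near-additively) under products and subadditively under mixtures.

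For the product case $f = f_1 f_2$, I would start from $\log f = \log f_1 + \log f_2$, so $|\log f| \le |\log f_1| + |\log f_2| \le \tilde f_1(\|x\|) + \tilde f_2(\|x\|)$, giving Assumption~\ref{assm:envelope} with $\tilde f = \tilde f_1 + \tilde f_2$ (which is non-decreasing as a sum of non-decreasing functions). Likewise $\nabla \log f = \nabla \log f_1 + \nabla \log f_2$, so for $\|x\| > M^*_1 \vee M^*_2$ the radial derivative satisfies $\langle x/\|x\|, \nabla \log f \rangle \le (C^*_1 - f_{1s}(\|x\|)) + (C^*_2 - f_{2s}(\|x\|))$, establishing Assumption~\ref{assm:super'} with $f_s = f_{1s} + f_{2s}$ and $C_1 = C^*_1 + C^*_2$. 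For Assumption~\ref{assm:cone}, the goal is to exhibit $\eta \in (0,1)$ with $f_s(\|x\|) - C_1 \ge \eta \|\nabla \log f(x)\|$; using $\|\nabla \log f\| \le \|\nabla \log f_1\| + \|\nabla \log f_2\|$ and the exact-curvature hypothesis \eqref{eq:curvature:exact} --- which says $-\langle x/\|x\|, \nabla \log f_i \rangle \ge \eta_i \|\nabla \log f_i\|$, hence $\|\nabla \log f_i\| \le -\eta_i^{-1} \langle x/\|x\|, \nabla \log f_i\rangle \le \eta_i^{-1}(f_{is}(\|x\|) - C^*_i)$ for large $\|x\|$ --- one gets $\|\nabla \log f\| \le \eta_1^{-1}(f_{1s} - C^*_1) + \eta_2^{-1}(f_{2s} - C^*_2) \le (\eta_1 \wedge \eta_2)^{-1}(f_s(\|x\|) - C_1)$, which is exactly the desired inequality with $\eta = \eta_1 \wedge \eta_2$, so Proposition~\ref{prop:curvature1} applies.

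For the mixture case $f = a_1 f_1 + a_2 f_2$, the envelope bound follows from $f \le \exp(\tilde f_1) + \exp(\tilde f_2) \le \exp(\tilde f_1 + \tilde f_2) + 1$ and a matching lower bound; bounding $|\log f|$ above and below by $\log(\exp(\tilde f_1 + \tilde f_2) + 1)$ after checking signs handles Assumption~\ref{assm:envelope}. For the radial derivative, write $\nabla \log f = (a_1 \nabla f_1 + a_2 \nabla f_2)/(a_1 f_1 + a_2 f_2)$ and express this as a convex combination $\lambda_1 \nabla \log f_1 + \lambda_2 \nabla \log f_2$ with weights $\lambda_i = a_i f_i / f \in [0,1]$, $\lambda_1 + \lambda_2 = 1$; then $\langle x/\|x\|, \nabla \log f\rangle = \lambda_1 \langle x/\|x\|, \nabla \log f_1\rangle + \lambda_2 \langle x/\|x\|, \nabla \log f_2 \rangle \le \lambda_1(C^*_1 - f_{1s}) + \lambda_2(C^*_2 - f_{2s}) \le \max_i C^*_i - \min(f_{1s}, f_{2s})$ for large $\|x\|$, giving Assumption~\ref{assm:super'} with $f_s = f_{1s} \wedge f_{2s}$ and $C_1 = C^*_1 \vee C^*_2$. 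Similarly $\|\nabla \log f\| \le \lambda_1 \|\nabla \log f_1\| + \lambda_2 \|\nabla \log f_2\|$, and the same convex-combination bound combined with the per-component curvature estimate yields $\|\nabla \log f\| \le (\eta_1 \wedge \eta_2)^{-1}(f_s(\|x\|) - C_1)$, so again Proposition~\ref{prop:curvature1} gives Assumption~\ref{assm:cone} with $\eta = \eta_1 \wedge \eta_2$.

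The main obstacle is the mixture case, specifically handling the convex-combination representation of $\nabla \log f$ uniformly in $x$: the weights $\lambda_i(x)$ depend on $x$ and can degenerate (one of them $\to 0$), so care is needed to ensure the per-component bounds — which hold only for $\|x\| > M^*_i$ — combine into a single bound valid for all $\|x\|$ larger than $M^*_1 \vee M^*_2$, independent of how the weights are distributed. A secondary subtlety is checking the sign of $\log f$ in the mixture envelope bound (since $f$ could be either larger or smaller than $1$), which requires being slightly careful that the chosen $\tilde f$ dominates $|\log f|$ rather than just $\log f$; monotonicity of $\tilde f$ and the fact that $\tilde f_i$ already dominate $|\log f_i|$ should suffice, but the bookkeeping must be done explicitly. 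Once these points are settled, the product case is essentially immediate from additivity.
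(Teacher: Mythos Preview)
Your handling of Assumption~\ref{assm:envelope} and Assumption~\ref{assm:super'} is correct and matches the paper: additivity of $\log f$ for the product and the convex-combination identity $\nabla\log f=\lambda_1\nabla\log f_1+\lambda_2\nabla\log f_2$ with $\lambda_i=a_if_i/f$ for the mixture are exactly what the paper uses.

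The gap is in your route to Assumption~\ref{assm:cone} via Proposition~\ref{prop:curvature1}. You write
\[
\|\nabla\log f_i\|\;\le\;-\eta_i^{-1}\Big\langle \tfrac{x}{\|x\|},\nabla\log f_i\Big\rangle\;\le\;\eta_i^{-1}\bigl(f_{is}(\|x\|)-C^*_i\bigr),
\]
but the second inequality is the \emph{reverse} of what Assumption~\ref{assm:super'} supplies. Assumption~\ref{assm:super'} says $\langle x/\|x\|,\nabla\log f_i\rangle\le C^*_i-f_{is}(\|x\|)$, i.e.\ $-\langle x/\|x\|,\nabla\log f_i\rangle\ge f_{is}(\|x\|)-C^*_i$, a \emph{lower} bound on $-\langle x/\|x\|,\nabla\log f_i\rangle$, not an upper bound. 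Without some additional control on $\|\nabla\log f_i\|$ from above, you cannot reach the inequality $f_s(\|x\|)-C_1\ge\eta\|\nabla\log f(x)\|$ that Proposition~\ref{prop:curvature1} requires. The same issue recurs in your mixture argument.

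The paper sidesteps this entirely: instead of trying to bound $\|\nabla\log f\|$, it verifies the curvature condition~\eqref{eq:curvature} directly for the combined $f$ and then invokes Proposition~\ref{prop:key}(i). For the product, write
\[
\Big\langle\tfrac{x}{\|x\|},\tfrac{\nabla\log f}{\|\nabla\log f\|}\Big\rangle
=\sum_{i=1}^2 \frac{\|\nabla\log f_i\|}{\|\nabla\log f_1+\nabla\log f_2\|}\,
\Big\langle\tfrac{x}{\|x\|},\tfrac{\nabla\log f_i}{\|\nabla\log f_i\|}\Big\rangle
\le -(\eta_1\wedge\eta_2),
\]
since each inner product is $\le -\eta_i$ and the nonnegative weights sum to at least $1$ by the triangle inequality. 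For the mixture, the same decomposition with weights $\|a_i\nabla f_i\|/\|a_1\nabla f_1+a_2\nabla f_2\|$ works verbatim. This establishes~\eqref{eq:curvature} with $\eta=\eta_1\wedge\eta_2$ for $\|x\|>M^*_1\vee M^*_2$, and Proposition~\ref{prop:key}(i) then gives Assumption~\ref{assm:cone}. Your worry about degenerating weights $\lambda_i(x)\to 0$ evaporates in this formulation: the bound $\le -(\eta_1\wedge\eta_2)$ holds regardless of how the weights are distributed.
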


\begin{proof}
    See Appendix~\ref{app:prop_class}.
\end{proof}

If the proposal satisfies Assumption~\ref{assm:prop}, then, under the conditions of Proposition~\ref{prop:class}, the RWMH is geometrically ergodic and computable bounds are available.  This is illustrated in the following example which is an extension of previous work~\cite[][Example 5.2]{jarn:hans:2000}. 

\begin{example}
For $a > 0$, let $f_1(x,y) = \sqrt{a} \pi^{-1}\exp\left(-a\,x^2-y^2\right)$ and $f_2(x,y)= \sqrt{a} \pi^{-1} \exp\left(-x^2-a\,y^2\right)$. Set
\[
f(x,y) =  \frac{\sqrt{a}}{2 \pi} \exp\left(-a\,x^2-y^2\right) + \frac{\sqrt{a}}{2 \pi} \exp\left(-x^2-a\,y^2\right).
\]
For $z \in \mathbb{R}_{+}$ define $\tilde{f}_{1} (z) = \max\{a,1\} z^2 + \sqrt{a}/\pi$.  Note that 
\begin{equation}
\label{mixture:gaussian:eqn1}
\left|\log f_1(x,y)\right| \le \max\{a,1\}\left( x^2 + y^2 \right) + \frac{\sqrt{a}}{\pi} = \tilde{f}_1(\|(x,y)\|)
\end{equation}
and, similarly, $\left|\log f_2(x,y)\right| \le \tilde{f}_1(\|(x,y)\|)$.  Hence set $\tilde{f}_1(\|(x,y)\|) = \tilde{f}_2(\|(x,y)\|)$.

Next, $f_1$ and $f_2$ satisfy Assumption~\ref{assm:super'} with   $f_{1s}(z) = f_{2s}(z) = 2 \min\{a,1\} z$ and $C^*_1 = C^*_2 = 0$ since
\[
\left\langle \frac{(x, y)}{\| (x, y) \|}, \nabla \log f_1(x,y)\right\rangle  = \left\langle \frac{\left(x, y \right)}{\left(x^2+y^2\right)^{1/2}},\left(-2a\, x, -2\,y\right)\right\rangle\le -2\,\min\{a,1\} \left(x^2+y^2\right)^{1/2}\]
and similarly for $f_2$.
To see that $\eta_1=\min\{a,1\}$ in \eqref{eq:curvature:exact} notice
\begin{align*}
\left\langle\frac{(x,y)}{\| (x,y) \|}, \frac{\nabla f_1(x,y)}{\|\nabla f_1(x,y)\|} \right\rangle 
 & = \left\langle \frac{\left(x,y\right)}{\left(x^2+y^2\right)^{1/2}}, \frac{\left(-2a\,x,-2y\right)}{\left(4a^2 x^2+4y^2\right)^{1/2}}\right\rangle \\
 & \le -\frac{\min\{a,1\}\left(x^2 + y^2\right)}{\max\{a^2 , 1\}\left(x^2 + y^2\right)} \\
 & \le  -\min\{a,1\}.
\end{align*}
Similarly, $\eta_2=\min\{a,1\}$. 

Consider any $\delta$ as in \eqref{delta:defn}.
In this given setting, $f_s(x)=2\, \min\{a,1\}\,x$, with $M^*=0$, $C_1=0$ and  $\tilde{f}$ is as defined in \eqref{mixture:gaussian:eqn1}. 
 Therefore 
\begin{align*}
\mathcal{R}_{\epsilon}&=\max\Bigg\{-\frac{1}{2\min\{a,1\}\,\delta}\log\epsilon+K_{\epsilon},\ 2K^2_{\epsilon}\left[\left(\frac{\epsilon}{q(0)\, C_B(p) \delta}\right)^{1/p-1} -K_{\epsilon}\right]^{-1}+K_{\epsilon},\\
& \quad \quad \quad \frac{1}{2\min\{a,1\}}\left(\bar{C}-\log \left(p^*\wedge 1\right)\right)+1\Bigg\}
\end{align*}
and $b=\exp(3\,\tilde{f}(\mathcal{R}_{\epsilon}))$.
Also for $\epsilon_{\alpha}<\min\{a,1,\tilde{\epsilon}_{\alpha}\}$ as in \eqref{eps:alpha:defn} define $\tilde{\lambda}$ as in \eqref{eq:driftcoeff}. Using this,
\[\mathcal{R}_{\max}=\frac{1}{2\min\{a,1\}}\left(-\log\left(\frac{(1-\tilde\lambda)^2}{2\,p^* \,(2b+\epsilon_{\alpha})^2}\wedge 1\right)\right).\]
Note that both $\mathcal{R}_{\epsilon}$ and $\mathcal{R}_{\max}$, require $p^*$, which may be obtained numerically. 
\end{example}

\section{Application to Bayesian Generalized Linear Models} \label{glm_type_families}

 Let $\Pi:\mathbb{R}^p \times \mathbb{R}^p \to \mathbb{R}^m$ , $T: \mathbb{R}^p \to \mathbb{R}^m$ for some $m\ge 1$, and $c: \mathbb{R}^p\times \mathbb{R}^p\to \mathbb{R}$.  Suppose $(Y_i,X_i)_{i=1}^{n}$ are random variables and assume that the $Y_{i} \mid X_i, \theta$ are conditionally independent  with probability function
 \begin{equation}
     \label{glmfam}
     p(y\mid \theta, x) \propto \exp \left(\left\langle\Pi(\theta,x), T(y)\right\rangle -c(\theta,x)\right).
 \end{equation}
If $g: \mathbb{R}^p \to \mathbb{R}$, assume a prior measure
 \begin{equation}
    \label{glmprior} 
    \exp\left(-g(\theta)\right) d\theta
 \end{equation}
 and assume that the posterior exists and is characterized by 
 \begin{align}\label{glmposterior}
f(\theta\mid y,x)\propto \exp\left(\sum_{i=1}^{n}\left\langle\Pi(\theta,x_i), T(y_i)\right\rangle-\sum_{i=1}^{n}c(\theta,x_i)-g(\theta)\right).
\end{align}

Consider a RWMH chain with proposal satisfying assumption~\ref{assm:prop} having $f(\theta\mid y,x)$ as its invariant density. Developing constraints on its geometric rate of convergence requires the following assumptions.

\begin{assumption}\label{assm:glm:diff}
	All of $\Pi$, $c$ and $g$ are twice continuously differentiable in $\theta$.
\end{assumption}

\begin{assumption}\label{assm:glm:Pi:bdd}
	If $\| \cdot \|_{sp}$ denotes the spectral norm, there exists  $\lambda(x_1,x_2,\cdots,x_n) \in \mathbb{R}_+$, such that \[\Vert\nabla \Pi(\theta,x_i)\Vert_{sp} \le \lambda(x_1,x_2,\cdots,x_n)<\infty\] for all \(i,\theta\).
\end{assumption}

\begin{assumption}\label{assm:glm:cumulant}
	There exists \(K(x_1,x_2,\cdots,x_n) \in \mathbb{R}_+\) such that either, for all \(i,\theta\), \[\Vert\nabla c(\theta,x_i)\Vert\le K(x_1,x_2,\cdots,x_n)<\infty,\] or 
	\[0\le \nabla^2 c(\theta,x_i) \le K(x_1,x_2,\cdots,x_n) \, I .\]
\end{assumption}

\begin{assumption}\label{assm:prior:Lipschitz}
The gradient of the log-likelihood of the prior is Lipschitz with Lipschitz constant, $\lambda_2$, that is \[\left\Vert\nabla g(\theta_1)-\nabla g(\theta_2)\right\Vert \le \lambda_2 \left\Vert\theta_1-\theta_2\right\Vert \]
\end{assumption}

Finally, one of the following two assumptions will be required.

\begin{assumption}\label{assm:prior:cnvx}
Let $g(x)$ be  $\lambda_1$ strongly convex, that is, there exists $\lambda_1 >0$ such that
$g(\theta_1)-g(\theta_2)\ge \left<\nabla g(\theta_2),(\theta_1-\theta_2)\right> +\frac{\lambda_1}{2}\Vert\theta_1-\theta_2\Vert^2$.
\end{assumption}

\begin{assumption}\label{assm:prior:diss}
Let $g(x)$ be $(a_{\dag},b_{\dag})$-dissipative in which case
there is some $a_{\dag}>0$ and some $b_{\dag}\ge 0$ such that for any $x\in \mathbb{R}^p$,  $x^{\mathsf{T}}\nabla g(x)>a_{\dag}\Vert x\Vert^2-b_{\dag}$.
\end{assumption}

Some key quantities need to be defined before proceeding.  Consider any $\delta$ from  \eqref{delta:defn}. Define $\gamma=\lambda_1$ when $g(\cdot)$ is $\lambda_1$-strongly convex and $\gamma=a_{\dag}$ when $g(\cdot)$ is $(a_{\dag},b_{\dag})-$dissipative. The superexponential decay rate function $f_s(\cdot):\mathbb{R}_+\to \mathbb{R}_+$ is then
$f_s(u)=c \, u.$ Based on the properties of the cumulant function and the prior,  four cases are considered and the expressions for $C_1$ are given below: (i) if $c(\cdot,x)$ has bounded derivative and $g(\cdot)$ is strongly convex, then  
\begin{equation}
\label{eq:C_1_1}
C_1=\lambda(x_1,x_2,\cdots,x_n) \sum_{i=1}^{n}\left|T(y_i)\right|+n\, K(x_1,x_2,\cdots,x_n)+\left|\nabla g(0)\right|;
\end{equation}
(ii) if $c(\cdot,x)$ is convex and $g(\cdot)$ is strongly convex, then
\begin{equation}
\label{eq:C_1_2}
C_1 =\lambda(x_1,x_2,\cdots,x_n) \sum_{i=1}^{n}\left|T(y_i)\right| +n\max_{1\le i\le n} \left|\nabla c(0,x_i)\right|+\left|\nabla g(0)\right|;
\end{equation}
(iii) if $c(\cdot,x)$ has bounded derivative and $g(\cdot)$ is $(a_{\dag},b_{\dag})$-dissipative, then 
\begin{equation}
\label{eq:C_1_3}
C_1 =\lambda(x_1,x_2,\cdots,x_n) \sum_{i=1}^{n}\left|T(y_i)\right|+n\, K(x_1,x_2,\cdots,x_n)+b_{\dag};
\end{equation}
or (iv) if $c(\cdot,x)$ is convex and $g(\cdot)$ is $(a_{\dag},b_{\dag})$-dissipative, then
\begin{equation}
\label{eq:C_1_4}
C_1 =\lambda(x_1,x_2,\cdots,x_n) \sum_{i=1}^{n}\left|T(y_i)\right| +n\max_{1\le i\le n} \left|\nabla c(0,x_i)\right|+b_{\dag}.
\end{equation}
Also define
\begin{equation}\label{keyquantities:K}
    \begin{split}
	    K_1&=\max_{1\le i \le n} \left\Vert\Pi(0,x_i)\right\Vert_{sp}\sum_{i=1}^{n} \left\Vert T(y_i)\right\Vert+n\max_{1\le i \le n}\left\Vert c(0,x_i)\right\Vert+\left\Vert g(0)\right\Vert,\\
	    K_2&=\lambda(x_1,x_2,\cdots,x_n)\sum_{i=1}^{n} \left\Vert T(y_i)\right\Vert+\max_{1\le i \le n}\left\Vert\nabla c(0,x_i)\right\Vert+\left\Vert\nabla g(0)\right\Vert,\\
	    K_3&=\frac{1}{2}\left(\lambda_2+K(x_1,x_2,\cdots,x_n)\right),\\
	    f_{x,y}(u)&=K_1+K_2\, u+K_3\, u^2,\\
    \end{split}
\end{equation}
and 
\begin{equation}\label{keyquantities:radii}
    \begin{split}
	  M'_p&=\max\left[\frac{1}{\gamma-\eta \lambda_2} \left(C_1+\lambda \sum_{i=1}^{n} \left\Vert T(y_i)\right\Vert+\tilde{J}(x_1,x_2,\cdots,x_n)+\left\Vert\nabla g(0)\right\Vert\right),1\right],\\
    \mathcal{R}_{\epsilon}&=\max\left\{\frac{1}{\gamma}\left(C_1-\frac{1}{\delta}\log\epsilon\right)+K_{\epsilon},\ 2K^2_{\epsilon}\left[\left(\frac{\epsilon}{q(0)\, C_B(p) \delta}\right)^{1/p-1} -K_{\epsilon}\right]^{-1}+K_{\epsilon},\right.\\
& \left.\quad \quad \quad \frac{1}{\gamma}\left(C_1-\log \left(\exp\left\{-f_{x,y}\left(\frac{1}{\gamma}\left(C_1\vee 1\right)\right)\right\}/p^*\wedge 1\right)\right)+1, \ M^*+K_{\epsilon}\vee 1\right\}\\
\mathcal{R}_{\max}&=f_s^{-1}\left(C_1-\log\left(\frac{(1-\tilde\lambda)^2}{2\,p^* \,(2b+\epsilon_{\alpha})^2}\wedge 1\right)\right)\vee M^*
    \end{split}
\end{equation}
The constant  $b$ is defined in \eqref{drift:const}. Note that $M^*=M_p=M'_p$ and $\epsilon_{\alpha}=\eta$ for $0<\eta<\gamma/\lambda_2$. Note that here $\gamma=\lambda_1$ where the $g(\cdot)$ is strongly convex and $\gamma=a_{\dag}$ in the case $g(\cdot)$ is $(a_{\dag},b_{\dag})$-dissipative. Also note that $\epsilon$ is as defined in \eqref{eps_def_1}.

\begin{proposition}\label{prop1}
   Let Assumptions~\ref{assm:glm:diff}--\ref{assm:prior:Lipschitz} hold and let either Assumption~\ref{assm:prior:cnvx} or Assumption~\ref{assm:prior:diss} be true. Consider a RWMH algorithm with proposal satisfying assumption~\ref{assm:prop} with stationary distribution as the distribution corresponding to the density given in~\eqref{glmposterior}. Let
 \[\tilde{\eta}=\exp\left\{-2\left(K_1+K_2\mathcal{R}_{\max}+K_3\mathcal{R}_{\max}^2\right)\right\}q(\mathcal{R}_{\max})\, \mu^{Leb}(B(0,\mathcal{R}_{\max})),\] 
 then the RWMH is geometrically ergodic and  	\begin{align*}
	  1 -  \inf_{x \in \mathbb{R}^p} \left[ \int_{\mathbb{R}^p} \alpha(x,y) q(x,y) dy\right]\le\rho\le \max\left\{\left(1-\tilde\eta\right)^{r},\tilde{\alpha}^{-(1-r)}A^r\right\} .
\end{align*}
\end{proposition}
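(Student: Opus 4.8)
The plan is to obtain Proposition~\ref{prop1} by specializing Theorem~\ref{thm:main_result} to the posterior \eqref{glmposterior}. Concretely, it suffices to verify Assumptions~\ref{assm:cone}--\ref{assm:prop} for $f(\theta\mid y,x)$ and to identify the quantities $f_s,C_1,M'_p,\tilde f,b,\tilde\lambda,\mathcal R_{\epsilon},\mathcal R_{\max},A,\tilde\alpha$ with the expressions recorded in \eqref{drift:const} and \eqref{keyquantities:K}--\eqref{keyquantities:radii}; then the geometric ergodicity claim and the two-sided bound on $\rho$ follow immediately from Theorems~\ref{thm:rwmh_ge} and~\ref{thm:main_result}. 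Throughout, write $\ell(\theta)=\sum_{i=1}^n\langle\Pi(\theta,x_i),T(y_i)\rangle-\sum_{i=1}^n c(\theta,x_i)-g(\theta)$, so that $\log f(\theta\mid y,x)=\ell(\theta)-\log Z$ with $0<Z<\infty$ by properness of the posterior; Assumption~\ref{assm:glm:diff} makes $\ell$, hence $f$, twice continuously differentiable.

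First I would check Assumption~\ref{assm:super'}. Differentiating $\ell$, the likelihood score contributes at most $\lambda(x_1,\dots,x_n)\sum_{i=1}^n\|T(y_i)\|$ in absolute value to $\langle\theta/\|\theta\|,\nabla\log f\rangle$, by Assumption~\ref{assm:glm:Pi:bdd}, Cauchy--Schwarz and $\|\nabla\Pi(\theta,x_i)^{\mathsf T}T(y_i)\|\le\|\nabla\Pi(\theta,x_i)\|_{sp}\|T(y_i)\|$. For the cumulant term, Assumption~\ref{assm:glm:cumulant} gives either $\|\nabla c(\theta,x_i)\|\le K(x_1,\dots,x_n)$ directly, or $\nabla^2 c\ge 0$, whence $\langle\theta,\nabla c(\theta,x_i)\rangle\ge\langle\theta,\nabla c(0,x_i)\rangle$ and $-\langle\theta/\|\theta\|,\nabla c(\theta,x_i)\rangle\le\|\nabla c(0,x_i)\|$. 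For the prior term, $\lambda_1$-strong convexity gives $\langle\theta,\nabla g(\theta)\rangle\ge\lambda_1\|\theta\|^2-\|\nabla g(0)\|\,\|\theta\|$ and $(a_{\dag},b_{\dag})$-dissipativity gives $\langle\theta,\nabla g(\theta)\rangle\ge a_{\dag}\|\theta\|^2-b_{\dag}$, so in both cases $-\langle\theta/\|\theta\|,\nabla g(\theta)\rangle\le-\gamma\|\theta\|+(\mathrm{const})$ with $\gamma=\lambda_1$ or $\gamma=a_{\dag}$. Summing the three bounds yields $\langle\theta/\|\theta\|,\nabla\log f(\theta\mid y,x)\rangle\le C_1-\gamma\|\theta\|$, with $C_1$ the appropriate one of \eqref{eq:C_1_1}--\eqref{eq:C_1_4}; this is Assumption~\ref{assm:super'} with $f_s(u)=\gamma u$, and in particular it gives Assumption~\ref{assm:super}.

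Next I would verify the hypothesis of Proposition~\ref{prop:curvature1}. Assumption~\ref{assm:prior:Lipschitz} gives $\|\nabla g(\theta)\|\le\|\nabla g(0)\|+\lambda_2\|\theta\|$, and Assumptions~\ref{assm:glm:Pi:bdd}--\ref{assm:glm:cumulant} bound $\|\nabla\ell(\theta)+\nabla g(\theta)\|$ by an affine function of $\|\theta\|$; hence $\|\nabla\log f(\theta\mid y,x)\|\le A_0+A_1\|\theta\|$ for explicit $A_0,A_1$. Choosing $\eta\in(0,1)$ with $\eta<\gamma/\lambda_2$, so that $\gamma-\eta A_1>0$, and taking $M'_p$ as in \eqref{keyquantities:radii}, one has $f_s(\|\theta\|)-C_1=\gamma\|\theta\|-C_1\ge\eta(A_0+A_1\|\theta\|)\ge\eta\|\nabla\log f(\theta)\|$ for $\|\theta\|>M'_p$, so Proposition~\ref{prop:curvature1} yields Assumption~\ref{assm:cone} with $M_p=M'_p$ and $\epsilon_{\alpha}=\eta$, and hence Theorem~\ref{thm:rwmh_ge} gives geometric ergodicity. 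For Assumption~\ref{assm:envelope} I would integrate the gradient bounds: $\|\Pi(\theta,x_i)\|\le\|\Pi(0,x_i)\|_{sp}+\lambda\|\theta\|$, $|c(\theta,x_i)|\le\|c(0,x_i)\|+\|\nabla c(0,x_i)\|\,\|\theta\|+\tfrac12 K\|\theta\|^2$ (a linear bound in the bounded-gradient case), $|g(\theta)|\le\|g(0)\|+\|\nabla g(0)\|\,\|\theta\|+\tfrac12\lambda_2\|\theta\|^2$, and absorb $|\log Z|$ and $\log p^*$ into the constant term; this gives $|\log f(\theta\mid y,x)|\le f_{x,y}(\|\theta\|)=K_1+K_2\|\theta\|+K_3\|\theta\|^2$ with $K_1,K_2,K_3$ as in \eqref{keyquantities:K}, i.e.\ Assumption~\ref{assm:envelope} with $\tilde f=f_{x,y}$. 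Assumption~\ref{assm:prop} is imposed on the proposal. With Assumptions~\ref{assm:cone}--\ref{assm:prop} in hand, Theorem~\ref{thm:main_result} applies; since $\tilde f=f_{x,y}$ one has $\tilde f(\mathcal R_{\max})=K_1+K_2\mathcal R_{\max}+K_3\mathcal R_{\max}^2$, which gives the stated $\tilde\eta$, while $\mathcal R_{\epsilon},\mathcal R_{\max},b,\tilde\lambda,A,\tilde\alpha$ reduce to the expressions in \eqref{drift:const} and \eqref{keyquantities:radii}, and the displayed bounds on $\rho$ are exactly those of Theorem~\ref{thm:main_result}.

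The only substantive work is the case-by-case bookkeeping above: keeping the matrix/spectral-norm estimates for $\nabla\Pi$ correct, and checking that the linear term $\gamma\|\theta\|$ in $f_s$ genuinely dominates the affine growth of $\|\nabla\log f\|$ in every combination of Assumption~\ref{assm:glm:cumulant} with Assumptions~\ref{assm:prior:cnvx}/\ref{assm:prior:diss} — this is what forces $\eta<\gamma/\lambda_2$ and pins down $M'_p$ and the four forms of $C_1$. Once the hypotheses of Theorem~\ref{thm:main_result} are in place, no further obstacle remains.
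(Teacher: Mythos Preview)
Your proposal is correct and follows essentially the same route as the paper. The paper's proof simply cites Lemmas~\ref{glmrepslemma} and~\ref{glmr1lemma} together with Theorem~\ref{thm:main_result}; those lemmas in turn rest on Lemma~\ref{glmsuplem} (your verification of Assumption~\ref{assm:super'} with $f_s(u)=\gamma u$), Lemma~\ref{lemmabounded1} (your envelope bound $\tilde f=f_{x,y}$), Lemma~\ref{glmmplemma} (your application of Proposition~\ref{prop:curvature1} with $\eta<\gamma/\lambda_2$ to obtain Assumption~\ref{assm:cone}), and Proposition~\ref{prop:R_eps} (to read off $\mathcal R_\epsilon$ and $\mathcal R_{\max}$). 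You have reproduced this chain inline, with the same case split over Assumption~\ref{assm:glm:cumulant} and Assumptions~\ref{assm:prior:cnvx}/\ref{assm:prior:diss}, so there is no substantive difference.
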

\begin{proof}
    The proof is a direct consequence of Lemma~\ref{glmrepslemma}, Lemma~\ref{glmr1lemma}, and Theorem~\ref{thm:main_result}.
\end{proof}

Note that in Proposition~\ref{prop1} the rate of convergence is bounded by functions of the data.

\subsection{Bayesian Logistic Regression}

Consider Bayesian Logistic Regression with likelihood
\begin{equation}
\label{egdensity}
Y_i \mid X_i, \theta \stackrel{indep}{\sim} Ber\left(\frac{e^{x^{T}_i\theta}}{1+e^{x^{T}_i\theta}}\right)
\end{equation}
and prior measure
\begin{equation}
\label{egprior}
\theta  \sim N(0,I).
\end{equation}
 
The posterior  is characterized by 
\begin{align}
\label{logisticposterior}
f\left(\theta\mid y,x\right)& \propto \exp\left\{\sum_{i=1}^{n}y_ix^{T}_i\theta -\sum_{i=1}^{n}\log\left(1+e^{x^{T}_i\theta}\right)-\frac{1}{2}\Vert\theta\Vert^2 \right\}.
\end{align}
The goal here is to establish upper bounds for the rate of convergence for an RWMH. 
For any $\epsilon$ as defined in \eqref{eps_def_1}, $0<\eta<1/(1+\lambda_{max}(X^{\mathsf{T}}X)/4)$, $C_1$ as in \eqref{eq:C_1_1}-\eqref{eq:C_1_4} with $\lambda(x_1,x_2,\cdots,x_n)=n\, \max_{i} \|x_i\|$, $T(y_i)=y_i$, $K(x_1,x_2,\cdots,x_n)=\lambda(x_1,x_2,\cdots,x_n)$ and \[\delta=\frac{\epsilon}{2\, q(0)\, C_B(p)\, K^{p-1}_{\epsilon}}\] define
\[f_{x,y}(u)=\max_{1\le i\le n}\left\Vert x_i\right\Vert\left(n+\sum_{i=1}^{n}\left\Vert y_i\right\Vert\right)\, u+u^2\]
and 
\begin{equation}
\label{logisticregconstants}
    \begin{split}
    M'_p&=\max\left[\frac{1}{1-\eta\left(1+\lambda_{max}(X^{\mathsf{T}}X)/4\right)}\left(C_1\, + \max_{1\le i\le n}\left\Vert x_i\right\Vert\sum_{i=1}^{n}\left\Vert y_i\right\Vert+n \right),1\right]\\
    \mathcal{R}_{\epsilon}&=\max\left\{C_1-\frac{1}{\delta}\log \epsilon +K_{\epsilon}, 2\, K_{\epsilon}\left[2^{1/(p-1)}-1\right]^{-1}+K_{\epsilon},\right.\\
    &\left.\quad \quad C_1+f_{x,y}(C_1\vee 1)+\log(p^*\wedge 1)+1,M^*+K_{\epsilon}\vee 1\right\}\\
    b&=3\, \exp\left(\frac{1}{2}f_{x,y}(\mathcal{R}_{\epsilon})\right).
    \end{split}
\end{equation}
Note that in this setting,
$f_s(x)=x$ and $M^*=M'_p$ and hence  $\mathcal{R}_{\max}$ may be calculated using \eqref{keyquantities:radii}.

Recall that $A,\tilde{\alpha}$ are defined in \eqref{A:defn} and \eqref{tilde_alpha:defn}, respectively.

\begin{theorem}
\label{logisticthm}
If
\[
\tilde{\eta}=\exp\left\{-2\,\left[\max_{1\le i\le n}\left\Vert x_i\right\Vert\left(n+\sum_{i=1}^{n}\left\Vert y_i\right\Vert\right)\, \mathcal{R}_{\max}+\mathcal{R}_{\max}^2\right]\right\}\, q(\mathcal{R}_{\max}) \mu^{Leb}\left(B(0,\mathcal{R}_{\max})\right),
\]
then the RWMH algorithm with invariant density defined in \eqref{logisticposterior} is geometrically ergodic with 
\begin{align*}
    	  1 -  \inf_{x \in \mathbb{R}^p} \left[ \int_{\mathbb{R}^p} \alpha(x,y) q(x,y) dy\right]\le \rho\le \max\left\{\left(1-\tilde{\eta}\right)^r,\tilde{\alpha}^{-(1-r)}A^r\right\}.
\end{align*}
\end{theorem}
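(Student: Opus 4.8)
The plan is to obtain Theorem~\ref{logisticthm} as a direct specialization of Proposition~\ref{prop1}: recognize the posterior \eqref{logisticposterior} as a Bayesian GLM posterior \eqref{glmposterior}, check Assumptions~\ref{assm:glm:diff}--\ref{assm:prior:Lipschitz} together with Assumption~\ref{assm:prior:cnvx}, and then read off the constants. In the notation of Section~\ref{glm_type_families} take $m=1$, $\Pi(\theta,x_i)=x_i^{\mathsf{T}}\theta$, $T(y_i)=y_i$, $c(\theta,x_i)=\log\!\left(1+e^{x_i^{\mathsf{T}}\theta}\right)$, and the prior $g(\theta)=\frac12\|\theta\|^2$; the posterior is proper since the likelihood is bounded and the $N(0,I)$ prior is integrable.

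First I would verify the structural assumptions. Assumption~\ref{assm:glm:diff} is immediate. For Assumption~\ref{assm:glm:Pi:bdd}, $\nabla_\theta\Pi(\theta,x_i)=x_i$, whose spectral norm is $\|x_i\|\le n\max_{i}\|x_i\|=:\lambda(x_1,\dots,x_n)$. For Assumption~\ref{assm:glm:cumulant}, $\nabla_\theta c(\theta,x_i)=\frac{e^{x_i^{\mathsf{T}}\theta}}{1+e^{x_i^{\mathsf{T}}\theta}}x_i$ has norm $\le\|x_i\|\le n\max_i\|x_i\|=:K(x_1,\dots,x_n)$, and moreover $0\le\nabla_\theta^2 c(\theta,x_i)=\frac{e^{x_i^{\mathsf{T}}\theta}}{(1+e^{x_i^{\mathsf{T}}\theta})^2}x_ix_i^{\mathsf{T}}\le\frac14 x_ix_i^{\mathsf{T}}$, so $\sum_i\nabla_\theta^2 c(\theta,x_i)\le\frac14 X^{\mathsf{T}}X$. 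For Assumption~\ref{assm:prior:Lipschitz}, $\nabla g(\theta)=\theta$, hence Lipschitz with $\lambda_2=1$; and $g$ is $1$-strongly convex, so Assumption~\ref{assm:prior:cnvx} holds with $\lambda_1=1$, i.e.\ $\gamma=1$ and $f_s(u)=\gamma u=u$. For Assumption~\ref{assm:envelope}, $\log(1+e^u)\le|u|+\log 2$ and $|y_i|\le1$ give $|\log f(\theta\mid y,x)|\le\max_i\|x_i\|\!\left(n+\sum_i\|y_i\|\right)\!\|\theta\|+\|\theta\|^2+\mathrm{const}$, which is the $f_{x,y}$ of \eqref{logisticregconstants} up to an additive constant; the explicit $C_1$ is then \eqref{eq:C_1_1} (or \eqref{eq:C_1_2}) with the above $\lambda$, $K$, $T$.

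The one point that needs genuine care is the curvature-type inequality behind Proposition~\ref{prop:curvature1}, $f_s(\|\theta\|)-C_1\ge\eta\|\nabla\log f(\theta)\|$. Here I would use the uniform Hessian bound $I\le -\nabla^2\log f(\theta)=I+\sum_i\frac{e^{x_i^{\mathsf{T}}\theta}}{(1+e^{x_i^{\mathsf{T}}\theta})^2}x_ix_i^{\mathsf{T}}\le\!\left(1+\frac14\lambda_{max}(X^{\mathsf{T}}X)\right)\!I$, so that $\nabla\log f$ is Lipschitz with constant $L:=1+\frac14\lambda_{max}(X^{\mathsf{T}}X)$ and $\|\nabla\log f(\theta)\|\le\|\nabla\log f(0)\|+L\|\theta\|$. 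Combined with $f_s(\|\theta\|)=\|\theta\|$ this forces $\eta<1/L$ and yields the displayed $M'_p$, with $\epsilon_\alpha=\eta$ and $M^*=M'_p$; and for large $\|\theta\|$ the curvature condition \eqref{eq:curvature} holds as well since $\nabla\log f(\theta)\approx-\theta$, so Proposition~\ref{prop:key} supplies Assumption~\ref{assm:cone}.

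The remaining quantities $\delta$, $\mathcal{R}_\epsilon$, $b$, $\mathcal{R}_{\max}$, $A$, $\tilde\alpha$ are then exactly the specializations in \eqref{delta:defn}, \eqref{keyquantities:radii}, \eqref{drift:const}, \eqref{A:defn}, \eqref{tilde_alpha:defn} with $\gamma=1$, $f_s(u)=u$, and $f_{x,y}$ as above, and $\tilde\eta$ in the statement is the value of $\tilde\eta$ in Proposition~\ref{prop1} after substituting $K_1+K_2\mathcal{R}_{\max}+K_3\mathcal{R}_{\max}^2=f_{x,y}(\mathcal{R}_{\max})$. With all assumptions and constants matched, Proposition~\ref{prop1} applies and delivers geometric ergodicity together with the stated bounds on $\rho$. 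I expect the only real difficulty to be the bookkeeping of these constants — in particular tracking how $\frac14\lambda_{max}(X^{\mathsf{T}}X)$ enters through the Lipschitz constant of $\nabla\log f$ — rather than any new conceptual step.
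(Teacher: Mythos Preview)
Your proposal is correct and follows essentially the same route as the paper: identify \eqref{logisticposterior} as an instance of the GLM posterior \eqref{glmposterior} with $\Pi(\theta,x_i)=x_i^{\mathsf{T}}\theta$, $T(y_i)=y_i$, $c(\theta,x_i)=\log(1+e^{x_i^{\mathsf{T}}\theta})$, $g(\theta)=\tfrac12\|\theta\|^2$, verify Assumptions~\ref{assm:glm:diff}--\ref{assm:prior:Lipschitz} and~\ref{assm:prior:cnvx}, and then invoke Proposition~\ref{prop1}. Your bookkeeping is in fact slightly cleaner than the paper's: you correctly distinguish the Lipschitz constant of $\nabla g$ (which is $1$, as Assumption~\ref{assm:prior:Lipschitz} literally requires) from the Lipschitz constant $L=1+\tfrac14\lambda_{\max}(X^{\mathsf{T}}X)$ of the full $\nabla\log f$, which is what actually drives the constraint $\eta<1/L$ and the form of $M'_p$; the paper simply overloads the symbol $\lambda_2$ with this latter value in the proof.
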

\begin{proof}
    See Appendix~\ref{app:logisticthem}.
\end{proof}

\section{Convergence Lower Bounds Using Spectral Theory}
\label{sec:spectral_results}

Let $\mathbb{L}_2(F)$ be the Hilbert space of all functions that are square integrable with respect to $F$ and having inner product 
\[
\left\langle h,g \right\rangle_{2} = \int h(x) g(x) \, dF(x)
\] 
and associated norm denoted $\Vert \cdot \Vert_2$.  Also $\mathbb{L}_{2,0}(F)$ is the set of all functions in $\mathbb{L}_2(F)$ with $\int h\, dF=0$.  Recall that $P$, defined in \eqref{RWMH:kernel}, is the Markov kernel for the RWMH, which is reversible with respect to $F$ by construction.  For any $h \in \mathbb{L}_{2,0}(F)$, 
\[
Pf(x) := \int f(y) P(x, dy) \in \mathbb{L}_{2,0}(F)
\] 
and hence $P$ may be regarded as a positive linear operator on $\mathbb{L}_{2,0}(F)$ \citep{10.1214/ECP.v18-2507}. The spectrum of $P$ is the set 
\[
S(P)=\left\{\lambda \in \mathbb{C}: P-\lambda Id \ \text{is not invertible}\right\}
\]
and $S_0(P)$ is the spectrum under the restriction of the function class being $\mathbb{L}_{2,0}(F)$. The spectral gap of $P$ is
\[
\gamma_P=1-\sup \left|S_0(P)\right|
\]
and define $Gap_R(P)=1-\sup S_0(P)$.

If $h \in \mathbb{L}_2(F)$ and the  Dirichlet form for $P$ is  
\[
\mathcal{E}(P,h)=\left\langle h,h\right\rangle_{2}-\left\langle Ph,h\right\rangle_{2},
\]
then define
\[
Gap_R(P)=\inf_{h\in \mathbb{L}_{2,0},\, h\ne 0}\frac{\mathcal{E}(P,h)}{\|h\|^2_2}.
\]
Moreover, it can be shown that $\gamma_P=Gap_R(P)$.

There are well-known bounds for $Gap_R(P)$ in terms of the conductance of $P$, $\Phi^*_P=\Phi_P(1/2)$ \cite[Theorem 3.5]{lawler1988bounds} with
\begin{align*}
\Phi_P(v)=\inf \left\{\frac{\int_A P(x,A^c) F(dx)}{F(A)}: \, F(A)\le v\right\}, \quad v \in (0,1/2].
\end{align*}
Specifically, a fundamental result \cite[Theorem 3.5]{lawler1988bounds} yields
\[
 \frac{1}{2}\left[\Phi^*_P\right]^2\le Gap_R(P)\le \Phi^*_P.
 \]

The following assumptions are required for the results in this section.
\begin{assumption}
\label{spectral:strngcnvx}
For the measure $F(\cdot)$, $-\log \frac{dF}{d\mu^{Leb}_p}(x)=-\log f(x)=U(x)$ is a m-strongly convex function. Also, for any $x,y \in \mathbb{R}^p$, and some  $L \in \mathbb{R}_+$.
\[
 \left\|\nabla U(x)-\nabla U(y)\right\|\le L\left\|x-y\right\| .
 \]
\end{assumption}

\begin{assumption}\label{spectral:proposalassm0}
	$q(x),\, x \in \mathbb{R}$ has mode $0$ with 
	$-\log q(x)=V_{\dag}(x)$. Also either $V_{\dag}(x)$ is $m_1$-strongly convex with $m_1>0$ or $V_{\dag}''(0)>0$ with $V_{\dag}'''(\cdot)\ge 0$.
\end{assumption}

\begin{remark}
	Note that this assumption contains some abuse of notation and extension of the domain of a definition. That is, $q(x,y)=q(|x-y|)$ is the proposal density and the domain of definition of the function is $\{0\}\cup \mathbb{R}_+$. We extend this by defining $q(x)=q(-x)$.
\end{remark}

\begin{assumption}\label{spectral:proposalassm}
	$q(x),\, x \in \mathbb{R}$ has mode $0$ with 
	$-\log q(x)=V_{\dag}(x)$. Also,  assumption~\ref{spectral:strngcnvx} is true, then $V_{\dag}''(x)\ge m_1>-m$ for all $x \in \mathbb{R}$. Finally,
	\[\frac{\left(2\pi\right)^{p/2} e^{-V_{\dag}(0)}}{\left(m+m_1\right)^{p/2}}<1.\]
\end{assumption}

\begin{proposition}\label{Spectral:lwr:bnd1}
    Under assumptions~\ref{spectral:strngcnvx} and ~\ref{spectral:proposalassm0}, the RWMH algorithm with $F(\cdot)$ as the stationary distribution and $Q(\cdot,\cdot)$ as the proposal is $(M,\rho)$-geometrically ergodic, with
    \[\rho \ge 1-\frac{1}{2}\,L\,\frac{p\,e^{-V_{\dag}(0)}} {\tilde{J}^{p/2+1}}\]
\end{proposition}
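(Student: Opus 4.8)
The plan is to observe that geometric ergodicity is already in hand, then to reduce the lower bound on $\rho$ to an \emph{upper} bound on the reversible spectral gap, and finally to obtain that bound from a single Rayleigh quotient with a linear test function. Geometric ergodicity is immediate from Section~\ref{sec:GE_RWMH}: $m$-strong convexity of $U=-\log f$ makes $f$ log-concave, and monotonicity of $\nabla U$ gives $\langle x,\nabla U(x)\rangle\ge m\Vert x\Vert^{2}-\Vert\nabla U(0)\Vert\,\Vert x\Vert$, hence $\langle x/\Vert x\Vert,\nabla\log f(x)\rangle\le\Vert\nabla U(0)\Vert-m\Vert x\Vert\to-\infty$; with continuous differentiability of $f$ (from the Lipschitz hypothesis on $\nabla U$) this is Assumption~\ref{assm:super}, so Proposition~\ref{prop:key}(ii) yields Assumption~\ref{assm:cone} and Theorem~\ref{thm:rwmh_ge} yields geometric ergodicity. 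Only the lower bound on $\rho$ remains.

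For the rate, reversibility of $P$ with respect to $F$ implies that any admissible $\rho$ satisfies $\rho\ge 1-Gap_R(P)$, since for a reversible chain the total-variation geometric rate dominates the $\mathbb{L}_{2}(F)$ spectral-gap rate $1-\gamma_{P}$ (see \cite{roberts_tweedie_2001} and the discussion preceding the statement, where $\gamma_{P}=Gap_R(P)$). So it suffices to bound $Gap_R(P)=\inf_{h\in\mathbb{L}_{2,0}(F),\,h\ne 0}\mathcal{E}(P,h)/\Vert h\Vert_{2}^{2}$ from above by evaluating the quotient at one convenient $h_{0}$. I would take the linear function $h_{0}(x)=\langle v,x-\E_{F}[X]\rangle$ for an arbitrary unit vector $v$; it lies in $\mathbb{L}_{2,0}(F)$ because the strongly log-concave $F$ has finite second moments and $\int h_{0}\,dF=0$.

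For the numerator, reversibility (the holding part of the kernel contributing nothing) gives $\mathcal{E}(P,h_{0})=\tfrac12\iint\langle v,x-y\rangle^{2}\min\{f(x),f(y)\}\,q(\Vert x-y\Vert)\,dx\,dy$, using $\alpha(x,y)f(x)=\min\{f(x),f(y)\}$. Substituting $z=x-y$, $w=(x+y)/2$ (unit Jacobian), $m$-strong log-concavity gives $\min\{f(w+z/2),f(w-z/2)\}\le\sqrt{f(w+z/2)f(w-z/2)}\le f(w)\,e^{-m\Vert z\Vert^{2}/8}$, and Assumption~\ref{spectral:proposalassm0} makes $V_{\dag}$ $m_{1}$-strongly convex on $\mathbb{R}$ with minimizer $0$ (directly with constant $m_{1}$ in the first case; with $m_{1}=V_{\dag}''(0)$ in the second, by evenness and $V_{\dag}''$ nondecreasing on $\mathbb{R}_{+}$), whence $q(\Vert z\Vert)=e^{-V_{\dag}(\Vert z\Vert)}\le e^{-V_{\dag}(0)}e^{-m_{1}\Vert z\Vert^{2}/2}$. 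Then $\int f(w)\,dw=1$ and $\langle v,z\rangle^{2}\le\Vert z\Vert^{2}$ reduce the numerator to the Gaussian moment $\int_{\mathbb{R}^{p}}\Vert z\Vert^{2}e^{-a\Vert z\Vert^{2}}dz=\tfrac{p}{2a}(\pi/a)^{p/2}$ with $a=m/8+m_{1}/2$, giving $\mathcal{E}(P,h_{0})\le\tfrac{p\,e^{-V_{\dag}(0)}}{4a}(\pi/a)^{p/2}$. For the denominator, $\Vert\nabla U(x)-\nabla U(y)\Vert\le L\Vert x-y\Vert$ forces $\E_{F}[\nabla^{2}U]\preceq L\,I$, so the Cram\'er--Rao bound for the location family $\{f(\cdot-\theta)\}$ gives $\mathrm{Cov}_{F}(X)\succeq L^{-1}I$ and hence $\Vert h_{0}\Vert_{2}^{2}=v^{\mathsf{T}}\mathrm{Cov}_{F}(X)\,v\ge L^{-1}$. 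Dividing, and collecting the numerical constant and the powers of $\pi$ and $a$ into $\tilde{J}$ as defined above, yields $Gap_R(P)\le\tfrac12 L\,p\,e^{-V_{\dag}(0)}\,\tilde{J}^{-(p/2+1)}$, hence $\rho\ge 1-\tfrac12 L\,p\,e^{-V_{\dag}(0)}/\tilde{J}^{p/2+1}$.

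The step I expect to be the main obstacle is the reduction $\rho\ge 1-Gap_R(P)$ --- making rigorous, for this reversible geometrically ergodic kernel, that the $\mathbb{L}_{2}$ rate bounds the total-variation rate from below --- together with the covariance lower bound $\mathrm{Cov}_{F}(X)\succeq L^{-1}I$. Once those are in place, the change of variables, the two strong-log-concavity estimates (including the verification that $V_{\dag}$ is globally $m_{1}$-strongly convex under the second alternative of Assumption~\ref{spectral:proposalassm0}), and the Gaussian moment computation are all routine bookkeeping.
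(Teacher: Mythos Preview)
Your overall architecture matches the paper's: establish geometric ergodicity from log-concavity and superexponential decay, pass from $\rho$ to the spectral gap via the Roberts--Tweedie equivalence \cite{roberts_tweedie_2001}, and bound $Gap_R(P)$ from above by the Rayleigh quotient at a centered linear function, with the denominator handled by the Cram\'er--Rao inequality $\operatorname{Var}_F(\langle v,X\rangle)\ge \|v\|^2/L$.

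The one substantive difference is in the numerator. The paper does not exploit strong log-concavity of the \emph{target} at all here: it simply uses $\alpha(x,y)\le 1$ (and that the Dirac part contributes zero) to obtain
\[
\mathcal{E}(P,h_0)\;\le\;\tfrac12\int F(dx)\int q(\|y-x\|)\,\langle v,y-x\rangle^2\,dy
\;=\;\tfrac{\|v\|^2}{2}\int_{\mathbb{R}^p}\|z\|^2\,q(\|z\|)\,dz,
\]
after which only the quadratic lower bound $V_\dag(\|z\|)\ge V_\dag(0)+\tfrac{\tilde J}{2}\|z\|^2$ (with $\tilde J=m_1$ or $\tilde J=V_\dag''(0)$) is used to evaluate the last integral. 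Your detour through $\min\{f(x),f(y)\}\le\sqrt{f(x)f(y)}\le f(w)e^{-m\|z\|^2/8}$ and the change of variables $(x,y)\mapsto(z,w)$ is correct and yields a tighter Gaussian parameter $a=m/8+\tilde J/2$, but then your sentence ``collecting the numerical constant and the powers of $\pi$ and $a$ into $\tilde J$ as defined above'' is not accurate: the constant you actually produce involves $a$, not the $\tilde J\in\{m_1,V_\dag''(0)\}$ appearing in the statement. To reproduce the stated bound exactly, drop the $\min\le\sqrt{\cdot}$ step and just use $\alpha\le 1$; the $m$-strong convexity of $U$ is used only for the Cram\'er--Rao denominator (and for geometric ergodicity), not for the Dirichlet-form numerator.
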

\begin{proof}[Proof of Proposition~\ref{Spectral:lwr:bnd1}]
      The fact that the RWMH is geometrically ergodic follows from Theorem~\ref{expfam:ergdocitythm} with $x=0$ and $c(\theta,0)=0$. Therefore there exists a $\rho$ such that 
    \[\|P^n(x,\cdot)-F(\cdot)\|_{TV}\le M(x) \rho^n. \]
    From previous work~\citep[Theorem 3]{roberts_tweedie_2001}, we know that 
    \[\|P^n(x,\cdot)-F(\cdot)\|_{2}\le C(x) \rho^n\]
    for some function $C(x)>0$. We also know that $\rho=1-Gap_R(P)$. Hence using Lemma~\ref{prop:spectral:gap}, the result follows.
\end{proof}
\begin{proposition}\label{Spectral:lwr:bnd2}
	Under assumptions~\ref{spectral:strngcnvx} and~\ref{spectral:proposalassm}, the RWMH algorithm with $F(\cdot)$ as the stationary distribution and $Q(\cdot,\cdot)$ as the proposal is $(M,\rho)$-geometrically ergodic, with
    \[\rho \ge 1-\frac{\left(2\pi\right)^{p/2} e^{-V_{\dag}(0)}}{\left(m+m_1\right)^{p/2}}\]
\end{proposition}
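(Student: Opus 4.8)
The plan is to mirror the structure of the proof of Proposition~\ref{Spectral:lwr:bnd1}: establish geometric ergodicity, pass to the $\mathbb{L}_2$ picture so that $\rho = 1 - Gap_R(P)$, and then derive an explicit \emph{upper} bound $Gap_R(P) \le (2\pi)^{p/2}e^{-V_{\dag}(0)}/(m+m_1)^{p/2}$. For geometric ergodicity, Assumption~\ref{spectral:strngcnvx} makes $U = -\log f$ $m$-strongly convex --- hence $f$ log-concave --- with globally Lipschitz gradient; strong convexity gives $\langle x/\|x\|, \nabla \log f(x)\rangle = -\langle x/\|x\|, \nabla U(x)\rangle \le -m\|x\| + \|\nabla U(0)\| \to -\infty$, so Assumption~\ref{assm:super} holds, and then Proposition~\ref{prop:key}(ii) gives Assumption~\ref{assm:cone} and Theorem~\ref{thm:rwmh_ge} gives geometric ergodicity. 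Since $P$ is reversible, the $\mathbb{L}_2$-convergence result of \citet{roberts_tweedie_2001} applies and $\rho = 1 - Gap_R(P)$, so it remains to bound $Gap_R(P)$ from above.

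For the gap bound I would use the variational characterization $Gap_R(P) = \inf_{h \in \mathbb{L}_{2,0},\, h\ne 0} \mathcal{E}(P,h)/\|h\|_2^2$ together with the Metropolis form of the Dirichlet form, $\mathcal{E}(P,h) = \tfrac12 \iint (h(x)-h(y))^2 \min\{f(x),f(y)\}\,q(\|x-y\|)\,dx\,dy$. Let $x^*$ be the unique mode of $f$ (it exists since $U$ is strongly convex), put $B_{\epsilon} = B(x^*,\epsilon)$, and take the test functions $h_{\epsilon} = \mathbf{1}_{B_{\epsilon}} - F(B_{\epsilon}) \in \mathbb{L}_{2,0}$ for small $\epsilon>0$, for which $\|h_{\epsilon}\|_2^2 = F(B_{\epsilon})(1 - F(B_{\epsilon}))$. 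Since $(h_{\epsilon}(x)-h_{\epsilon}(y))^2 = \mathbf{1}_{B_{\epsilon}}(x)\mathbf{1}_{B_{\epsilon}^c}(y) + \mathbf{1}_{B_{\epsilon}^c}(x)\mathbf{1}_{B_{\epsilon}}(y)$, symmetry of the kernel and then enlarging the inner domain to $\mathbb{R}^p$ give
\[
\mathcal{E}(P,h_{\epsilon}) = \int_{B_{\epsilon}}\!\int_{B_{\epsilon}^c} \min\{f(x),f(y)\}\,q(\|x-y\|)\,dy\,dx \;\le\; \int_{B_{\epsilon}} f(x)\,a(x)\,dx \;\le\; \Big(\sup_{x\in B_{\epsilon}} a(x)\Big)F(B_{\epsilon}),
\]
where $a(x) = \int \alpha(x,y)\,q(\|x-y\|)\,dy = f(x)^{-1}\int \min\{f(x),f(y)\}q(\|x-y\|)\,dy$ is the acceptance probability at $x$. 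Hence $Gap_R(P) \le \sup_{x\in B_{\epsilon}} a(x)/(1 - F(B_{\epsilon}))$, and letting $\epsilon \downarrow 0$, using continuity of $a(\cdot)$ at $x^*$ (dominated convergence with $\alpha \le 1$) together with $F(B_{\epsilon})\to 0$, yields $Gap_R(P) \le a(x^*)$.

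Finally I would bound $a(x^*)$ via strong log-concavity. The map $y \mapsto f(y)\,q(\|x^*-y\|) = \exp\{-(U(y) + V_{\dag}(x^*-y))\}$ has Hessian $\succeq (m+m_1)I$ by Assumptions~\ref{spectral:strngcnvx} and~\ref{spectral:proposalassm}, with $m+m_1>0$; crucially both factors $f(\cdot)$ and $q(\|x^*-\cdot\|)$ attain their maxima at $x^*$, so the product attains its maximum $f(x^*)q(0)$ there. Therefore $f(y)q(\|x^*-y\|) \le f(x^*)q(0)\exp\{-\tfrac{m+m_1}{2}\|y-x^*\|^2\}$, and integrating gives $\int f(y)q(\|x^*-y\|)\,dy \le f(x^*)q(0)(2\pi/(m+m_1))^{p/2}$; dividing by $f(x^*)$ gives $a(x^*) \le q(0)(2\pi/(m+m_1))^{p/2} = (2\pi)^{p/2}e^{-V_{\dag}(0)}/(m+m_1)^{p/2}$, which is $<1$ by Assumption~\ref{spectral:proposalassm}. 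Combining, $\rho = 1 - Gap_R(P) \ge 1 - a(x^*) \ge 1 - (2\pi)^{p/2}e^{-V_{\dag}(0)}/(m+m_1)^{p/2}$. The main obstacle is the localization step: making rigorous that a family of test functions concentrating at the mode certifies $Gap_R(P) \le a(x^*)$ --- in particular the continuity of $a$ at $x^*$ and that $\|h_{\epsilon}\|_2^2$ stays positive while the ratio converges --- since the closing strong-log-concavity estimate for $a(x^*)$ is routine, and note it works precisely because $f$ and the proposal share a maximizer only at $x^*$.
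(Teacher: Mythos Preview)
Your argument is correct. Both you and the paper reduce to bounding $Gap_R(P)$ from above after invoking geometric ergodicity and Roberts--Tweedie, but the routes to that upper bound differ. The paper goes through conductance: it proves the auxiliary Lemma~\ref{prop:conductance}, bounding $\alpha(x)$ for \emph{every} $x$ by $\frac{(2\pi)^{p/2}e^{-V_{\dag}(0)}}{(m+m_1)^{p/2}}\exp\{\|\nabla U(x)\|^2/2(m+m_1)\}$, then takes the shrinking sets $B_{\rho_a}=\{x:\|\nabla U(x)\|\le \rho_a\sqrt{m+m_1}\}$ to bound $\Phi^*_P$ and finally applies the Cheeger-type inequality $Gap_R(P)\le \Phi^*_P$. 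Your indicator test functions $h_\epsilon=\mathbf{1}_{B_\epsilon}-F(B_\epsilon)$ in the variational formula for $Gap_R(P)$ are in effect the proof of that easy direction of Cheeger, so you are unpacking the same mechanism while avoiding the black-box citation; and by working directly at the mode $x^*$ (where $\nabla U=0$) you skip the uniform-in-$x$ acceptance bound and the extra exponential factor. Conversely, the paper's version has the mild advantage of producing a pointwise bound on $\alpha(x)$ that could be reused elsewhere. One small simplification for your last step: rather than asserting that the $p$-variate Hessian of $y\mapsto U(y)+V_{\dag}(\|x^*-y\|)$ is $\succeq (m+m_1)I$ (which requires checking the tangential eigenvalue $V_{\dag}'(r)/r\ge m_1$), you can add the two scalar quadratic lower bounds $U(y)\ge U(x^*)+\tfrac{m}{2}\|y-x^*\|^2$ and $V_{\dag}(s)\ge V_{\dag}(0)+\tfrac{m_1}{2}s^2$ directly, exactly as the paper does in Lemma~\ref{prop:conductance}.
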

\begin{proof}[Proof of Proposition~\ref{Spectral:lwr:bnd2}]
  The proof of Proposition~\ref{Spectral:lwr:bnd2} is identical to that of Proposition~\ref{Spectral:lwr:bnd1} with Lemma~\ref{prop:spectral:gap} being replaced by Lemma~\ref{prop:conductance} .
\end{proof}

\section{Final Remarks}
In addition to improving the sufficient conditions for geometric ergodicity, the first explicit constraints on the geometric rate of convergence for RWMH on $\mathbb{R}^{p}$ with a general proposal were developed.  These constraints allow one to begin to study how various features might impact the geometric convergence rate of RWMH. However, the computable upper bounds were obtained by appealing to well known results~\citep{rose:1995a}, which even though they are often conservative can produce practical, reasonable bounds  \citep{jone:hobe:2001, jone:hobe:2004, rose:1996}. 
 Moreover, the quality of the upper bounds depends on the quality of the drift and minorization conditions.  The approach taken here used the same drift function as previous work. Alternative drift functions for specific target distributions or improved inequalities could result in sharper upper bounds.    

The approach to establishing explicit drift and minorization conditions for RWMH appears to have potential for wider application to other MH algorithms.  For example, recent work established sufficient conditions for geometric ergodicity of Metropolis Adjusted Langevin Algorithms ~\cite{roy2021convergence} and the approach taken here would seem to apply in developing explicit constraints on its geometric rate of convergence.
 
\bibliographystyle{apalike}
\bibliography{mcref}

\begin{appendices}\label{appendix}
 
\section{Proofs for Section~\ref{sec:GE_RWMH}}
 
\subsection{Theorem~\ref{thm:rwmh_ge}}
\label{app:GE_RWMH}
Establishing Theorem~\ref{thm:rwmh_ge} concerns exploiting assumption~\ref{assm:cone} to obtain a bound independent of the current location of the algorithm.. 
Consider the Lyapunov function $V(x)=f(x)^{-1/2}$ and let $R(x)=A(x)^c$.  By assumption $f$ is superexponentially light and hence \citep[][p. 352]{jarn:hans:2000} 
\begin{align*}
\lim_{\|x\|\to \infty} \frac{PV(x)}{V(x)} = \lim_{\|x\|\to \infty} Q(x,R(x))
\end{align*}
Note that \(Q(x,R(x))=1-Q(x,A(x))\). Using assumption~\ref{assm:cone}, there exists an $M_p$ such that for all $x\in \mathbb{R}^p$ with $\|x\|>M_p$, $W(x) \subset A(x)$. Hence for any $x$ with $\|x\|>M_p$, $Q(x,A(x))\ge Q(x,W(x))$ where $Q(x,W(x))$ does not depend on $x$ by the assumption on the proposal kernel. Then  
\[
\lim_{\|x\|\to \infty} \frac{PV(x)}{V(x)} \le  \lim_{\|x\|\to \infty} 1-Q(x,W(x)) =1-Q(0,W(0)).
\]
Hence the drift condition is satisfied. The minorization follows since the proposal is unbounded off of compact sets and hence the proposal can be minorized using Lebesgue measure \citep[][Theorem 2.2]{robe:twee:1996}.

\subsection{Proposition~\ref{prop:key}}
\label{app:key}
We shall consider the two cases separately. In the first case, we exhibit a cone in the acceptance region given the curvature condition. The second case follows from two lemmas which are subsequently established.
\begin{enumerate}
  \item[(i)] Let $n(x)=x/\Vert x\Vert$ and $m(x)=\nabla f(x)/\Vert\nabla f(x)\Vert$.  The curvature condition~\eqref{eq:curvature} implies there exists $M_p$ such that \(n(x)\cdot m(x)\le -\eta\) for some $1\ge \eta>0$ and $\Vert x\Vert>M_p$.  Consider vectors $\xi \in \mathbb{S}^{p-1}$ such that $\Vert \xi -n(x)\Vert\le \eta/2$. Let the angle between $\xi$ and $n(x)$ be denoted by $\beta_{x,\xi}$.
  one can see that 
  \begin{align*}
      \left\Vert\xi -n(x)\right\Vert^2\le \eta^2/4
  \end{align*}
  which in turn implies 
  \[\Vert\xi\Vert^2+\Vert n(x)\Vert^2-2\Vert n(x)\Vert\Vert\xi\Vert\cos\beta_{\xi,x}\le \eta^2/4.\]
  This implies \[2-2\cos\beta_{x,\xi} \le \eta^2/4,\] which in turn implies,  \[0\le \beta_{x,\xi}\le \cos^{-1}\left(1-\eta^2/8\right)\]
	where $\cos^{-1}$ to be between $[0,\pi/2]$. Also note that there is an $M_p$ such that any $K>0$ can be accommodated, that is, choose new $M_p$ as $M_p+K$ and the results still hold. 

\item[(ii)]
The proof immediately follows from the following two Lemmas.  

\begin{lemma}\label{coneconvexlemma}
	If $\log f(x)$ is a concave function, then $A(x)$ is a convex set with nonempty interior and hence admits a cone of some radius $r>0$ and angle $\alpha$ at all boundary points
\end{lemma}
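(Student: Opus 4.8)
The plan is to establish the three asserted properties of $A(x)=\{y:f(y)\ge f(x)\}$ in sequence: convexity, nonempty interior, and the existence of an inscribed cone at every boundary point. First I would observe that since $\log f$ is concave, for any fixed $x$ the superlevel set $A(x)=\{y:\log f(y)\ge \log f(x)\}$ is exactly a superlevel set of a concave function, hence convex by the standard fact that $\{g\ge c\}$ is convex whenever $g$ is concave. For nonempty interior, I would invoke Assumption~\ref{assm:super}: the superexponential condition forces $f$ to decay to $0$ along every ray, so $f$ attains a maximum $p^*>f(x)$ at some point (when $\|x\|$ is large enough that $f(x)<p^*$, which is the only regime that matters for Assumption~\ref{assm:cone}), and by continuity $\{y:f(y)>f(x)\}$ is a nonempty open subset of $A(x)$; thus $A(x)$ has nonempty interior. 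I should note that the relevant $x$ are those with $\|x\|>M_p$ large, so assuming $f(x)<p^*$ is harmless.

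Next, for the cone: let $y_0\in\partial\overline{A(x)}$ be any boundary point and let $z_0$ be an interior point of $A(x)$ (which exists by the previous step), so there is a ball $B(z_0,r_0)\subset A(x)$ for some $r_0>0$. Since $A(x)$ is convex, the convex hull of $\{y_0\}\cup B(z_0,r_0)$ is contained in $\overline{A(x)}$; this convex hull is precisely a (solid, truncated) cone with apex $y_0$, axis along $z_0-y_0$, some positive half-angle $\alpha$ determined by $r_0$ and $\|z_0-y_0\|$, and some positive height. Its relative interior lies in the interior of $A(x)$ up to the boundary face near $y_0$, which is what is needed to match the definition of $W^K_{\epsilon_\alpha}$ in~\eqref{eq:cone} after a possible shrink of the half-angle and height. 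I would then remark that the parameters $r>0$ and $\alpha$ can be taken uniform: as $\|x\|\to\infty$ the interior ball can be centered near the global mode with radius bounded below, while the apex $y_0$ can be chosen on the segment from the mode in the direction of $x$, giving a direction $\xi$ close to $x/\|x\|$; this is what connects to Assumption~\ref{assm:cone}.

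The main obstacle I anticipate is not the convexity (which is immediate) but the \emph{uniformity} of the cone's angle and radius as $\|x\|\to\infty$, together with aligning the cone's axis with the radial direction $x/\|x\|$ so that the constructed cone actually has the form $W^K_{\epsilon_\alpha}(y)$ with a fixed $\epsilon_\alpha$ independent of $x$. Handling this cleanly requires using log-concavity more quantitatively: near the mode $\log f$ is bounded below on a fixed ball, and concavity then propagates a lower bound on $f$ along the chord from that ball toward $x$, which is exactly the mechanism that keeps the inscribed cone from degenerating. I expect the companion lemma (the ``second lemma'' referenced in the proof of Proposition~\ref{prop:key}(ii)) is precisely the one that extracts this uniform $\epsilon_\alpha$ and the explicit direction, so here I would only need to prove existence of \emph{some} cone at each boundary point and defer the uniformity bookkeeping to that lemma.
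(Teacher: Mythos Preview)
Your proposal is correct and follows the same approach as the paper: show convexity of $A(x)$ directly from concavity of $\log f$, then obtain the cone at a boundary point as the convex hull of that point with an interior ball. The paper's own proof is in fact terser than yours---it establishes convexity by the same one-line computation, asserts nonempty interior without argument, and simply states that a convex body admits a cone at every boundary point---so your version fills in details the paper leaves implicit; in particular your observation that the nonempty-interior claim only needs to hold for $\|x\|$ large (so that $f(x)<p^*$) is exactly the right reading, and your deferral of the uniformity of $\epsilon_\alpha$ and the alignment with $x/\|x\|$ to the companion lemma matches how the paper organizes the argument via Lemma~\ref{lemmingway}.
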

 \begin{proof}
 	Note that the region of sure acceptance is given by \[A(x)=\left\{y: f(y)\ge f(x)\right\}.\]
 	Note that this is equivalent to \[A(x)=\left\{y: \log f(y)\ge \log f(x)\right\}\]
 	since $f>0$ in $\mathbb{R}^p$. Consider any $0\le \kappa \le 1$ and note that for $y_1,y_2 \in A(x)$,
 	\[\log f(\kappa y_1+(1-\kappa)y_2)\ge \kappa \log f(y_1)+(1-\kappa)\log f(y_2)\ge \log f(x).\]
 	This implies $\kappa y_1+(-\kappa)y_2 \in A(x)$. Therefore $A(x)$ is a convex set with non-empty interior. Hence every boundary point of $A(x)$ admits a cone of some radius and some angle. 
 \end{proof} 
 
\begin{lemma}\label{lemmingway}
    If $f$ is superexponential and $A(x)$ is a convex set with nonempty interior for every $x$ with $\Vert x\Vert >\tilde{M}_p$, then assumption~\ref{assm:cone} is satisfied for some $M_p$.
\end{lemma}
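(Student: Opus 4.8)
The plan is to build the cone required by Assumption~\ref{assm:cone} directly, by anchoring it at a carefully chosen boundary point of $A(x)$ and spanning it by a large inscribed ball; superexponentiality will enter only through the fact that this inscribed ball grows without bound.

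First I would record the one consequence of Assumption~\ref{assm:super} that is actually used: the superexponential condition forces $f(x)\to 0$ as $\|x\|\to\infty$ (integrate $\langle x/\|x\|,\nabla\log f(x)\rangle$, which is eventually uniformly below any prescribed $-L$, along rays). Consequently, for every fixed $R>0$ one has $\overline{B(0,R)}\subset A(x)$ once $\|x\|$ is large enough that $f(x)<\min_{\|w\|\le R}f(w)$; hence $R(x):=\sup\{R:\overline{B(0,R)}\subset A(x)\}$ is finite and $R(x)\to\infty$ as $\|x\|\to\infty$. For $\|x\|$ large enough that also $A(x)$ is convex with nonempty interior (the Lemma's hypothesis, valid for $\|x\|>\tilde{M}_p$), a compactness argument produces a point $w=w(x)\in\partial\overline{A(x)}$ with $\|w\|=R(x)$: if the whole sphere $\{\|w\|=R(x)\}$ lay in the open set $\mathrm{int}\,A(x)$, a slightly larger closed ball would still fit inside $A(x)$, contradicting the definition of $R(x)$.

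Next, fix any admissible $\epsilon_\alpha$ as in \eqref{eps:alpha:defn} and any $K>0$, and set $\theta^{*}=\arccos(1-\epsilon_\alpha^{2}/8)$, so that, for $\xi\in\mathbb{S}^{p-1}$, the condition $\|\xi-w/\|w\|\|\le\epsilon_\alpha/2$ is exactly $\angle(\xi,w/\|w\|)\le\theta^{*}$. Since $\epsilon_\alpha<\tilde\epsilon_\alpha=(8-8\cos\alpha)^{1/2}$ with $\alpha<\cos^{-1}(7/8)$, we get $\epsilon_\alpha^{2}<1$, hence $1-\epsilon_\alpha^{2}/8>7/8$ and $\theta^{*}<\cos^{-1}(7/8)<\pi/6$ — this is where the numerical constant $7/8$ enters. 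By convexity, $\mathrm{conv}\big(\overline{B(0,R(x)-K)}\cup\{w\}\big)\subset A(x)$ once $R(x)>K$, and I claim this convex hull already contains the cone $W^{K}_{\epsilon_\alpha}(w)$ of \eqref{eq:cone} as soon as $R(x)$ also exceeds $K/\cos\theta^{*}$ and $K/(1-\sin\theta^{*})$, which holds for $\|x\|$ large because $R(x)\to\infty$. Indeed, for $0<a<K$ and $\xi$ with $\gamma:=\angle(\xi,w/\|w\|)\le\theta^{*}$, put $c=R(x)\cos\gamma$ and $b=w-c\,\xi$; then $\|b\|^{2}=R(x)^{2}-2cR(x)\cos\gamma+c^{2}=R(x)^{2}\sin^{2}\gamma\le R(x)^{2}\sin^{2}\theta^{*}\le(R(x)-K)^{2}$, so $b\in\overline{B(0,R(x)-K)}$, while $w-a\xi=(1-a/c)\,w+(a/c)\,b$ with $0<a/c<1$ (since $a<K<R(x)\cos\theta^{*}\le c$), so $w-a\xi$ lies in the convex hull, hence in $A(x)$. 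Taking $M_p$ larger than $\tilde{M}_p$ and large enough that $R(x)$ exceeds $K$, $K/\cos\theta^{*}$ and $K/(1-\sin\theta^{*})$ for $\|x\|>M_p$ then verifies Assumption~\ref{assm:cone}; together with Lemma~\ref{coneconvexlemma} this completes Proposition~\ref{prop:key}(ii).

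The only genuinely nontrivial decision is the choice of the anchor point: one is tempted to take $w$ to be the boundary point of $A(x)$ farthest from the origin, where the outward normal is radial and the curvature condition holds exactly; but then the ice‑cream cone spanned by a fixed inscribed ball has half‑angle tending to $0$ whenever the level set $A(x)$ becomes very elongated, and fitting a fixed‑aperture cone inside it requires a delicate "the level set flattens" estimate. Anchoring instead at a point at distance exactly $R(x)$ from the centre of the largest inscribed ball makes the spanning cone open with half‑angle $\arcsin\big((R(x)-K)/R(x)\big)\to\pi/2$, which comfortably accommodates the fixed aperture $\theta^{*}<\pi/6$ no matter how elongated $A(x)$ is; this is precisely the point at which $R(x)\to\infty$ (i.e.\ superexponentiality) does the work, and where the argument sharpens the corresponding step in \cite{jarn:hans:2000}.
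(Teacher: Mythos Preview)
Your argument is correct, and it is in fact cleaner than the paper's own proof of this lemma. The paper proceeds more indirectly: it fixes an arbitrary boundary point $z$ of $A(x)$ collinear with $0$ and an interior point, observes that the convex hull of $\{z\}$ with a small interior ball contains \emph{some} cone $W^K_\epsilon(z)$, and then argues by contradiction that the admissible $\epsilon$ cannot tend to $0$ as $\|x\|\to\infty$ (invoking that $A(x_n)$ is nested increasing and cannot collapse to a lower-dimensional set). The contradiction step in the paper is somewhat informal.

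Your route is different and more constructive: you single out a specific boundary point $w$, namely one on the sphere $\{\|w\|=R(x)\}$ bounding the largest origin-centred ball contained in $A(x)$, and then verify by a direct convex-combination computation that the fixed-aperture cone $W^K_{\epsilon_\alpha}(w)$ sits inside $\mathrm{conv}\big(\overline{B(0,R(x)-K)}\cup\{w\}\big)\subset A(x)$ once $R(x)$ exceeds an explicit threshold depending only on $K$ and $\theta^*$. The superexponentiality hypothesis is then used only through $R(x)\to\infty$, exactly as you note. This buys you a quantitative handle on $M_p$ (in terms of the rate at which $R(x)\to\infty$), avoids the paper's contradiction step entirely, and makes transparent why anchoring at the ``nearest'' boundary point rather than the farthest one sidesteps the elongation issue you describe in your final paragraph.
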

\begin{proof}
    Note that since $f$ is superexponentially light there exists an $R_{f}\in \mathbb{R}$ such that $0 \in A(x)\subset A(y)$ whenever $\Vert y\Vert >\Vert x\Vert >R_{f}$. Consider for each $z \in \partial A(x)$, the set  \[W^K_{\epsilon}(z)=\left\{y=z-a\xi : 0<a<K, \ \xi\in \mathbb{S}^{p-1}, \ \left\Vert\xi-n(z)\right\Vert\le \epsilon/2 \right\}.\]
    Note that by the two facts of $A(x)$ being convex and $f$ being continuous on $\mathbb{R}^p$, $W^K_{\epsilon}(z)\subset A(x)$. This is easy to see constructively by considering any point in $b_{int} \in A(x)^o$ (the interior of $A(x)$). Note that there exists an $r>0$ such that $B(b_{int},r)\subset A(x)$ (where $B(b_{int},r)$ is the $p$-dimensional open ball centered at $b_{int}$ with radius $r>0$) as $f$ is continuous and hence $A(x)^o$ is an open set. Join all points from $z$ to $B(b_{int},r)$ with $z,b_{int},0$ colinear. Note that such a $z$ always exists. Also note that by convexity this constructed set is in $A(x)$. It is easy to see that for some $\epsilon$ and $K$ this set contains $W^K_{\epsilon}(z)$. Next we prove that one always finds such a cone even if $\Vert x\Vert$ increases.
      Consider the case where the statement of the Lemma fails for each $z$ on the boundary of $A(x)$ i.e. $\epsilon \to 0$ as $\Vert x\Vert\to \infty$. By our hypothesis and \citep{jarn:hans:2000} there shall exist three points at the boundary points $\tilde{z}_1, \tilde{z}_2$ and $\tilde{z}_3$ such that each shall be the vertex of a cone $W^K_{\epsilon}(\tilde{z}_i)$ where $\epsilon$ in this case is the maximum permissible angle. This immediately implies that for some $\lambda$ there exists points $z_1,z_2$ such that $\lambda z_1+(1-\lambda)z_2 \not\in A(x)$. This is true as if it is not, we can indeed find a cone of larger angle than $\epsilon$ by joining all points of $W^K_{\epsilon}(\tilde{z}_i)$ by straight lines. Hence $A(x)$ is not convex and we have a contradiction. The only case that remains is when $A(x)$ converges to a lower dimensional subspace as $\Vert x\Vert\to \infty$. This is however again not true as $f$ is superexponentially light and hence there exist a sequence of $x_n$ such that $A(x_n) \subset A(x_{n+1})$. 
   \end{proof}
   \end{enumerate}

\subsection{Proof of Proposition~\ref{prop:curvature1}}
\label{app:curvature1}
The proof follows using by simple manipulations.
Note that 
	\begin{align*}
	\left\langle \frac{x}{\Vert x\Vert}, \frac{\nabla f(x)}{\Vert \nabla f(x)\Vert}\right\rangle &=	\left\langle \frac{x}{\Vert x\Vert}, \frac{\nabla \log f(x)}{\Vert \nabla \log f(x)\Vert}\right\rangle\\ 
	&\le \frac{C_1}{\Vert \nabla \log f(x)\Vert}-\frac{f_s(\vert x\vert)}{\left\Vert \nabla \log f(x)\right\Vert}\\
	&\le -\eta.
	\end{align*}
	This implies that 
	\[\limsup_{\|x\|\to\infty} \left\langle \frac{x}{\Vert x\Vert}, \frac{\nabla f(x)}{\Vert \nabla f(x)\Vert}\right\rangle \le -\eta.\]
	Hence, using Proposition~\ref{prop:key}, the proof is immediate.

\subsection{Proof of Theorem~\ref{expfam:ergdocitythm}}
\label{app:proof of expfam:ergdocitythm}
The following calculation will establish that the density in question is both superexponentially light and log-concave and hence the result will follow from Proposition~\ref{prop:key} and Theorem~\ref{thm:rwmh_ge}.
Note that 
    \begin{align*}
        \left\langle\frac{\theta}{\left\Vert\theta\right\Vert}, \nabla \log f(\theta\mid X_1,X_2,\cdots,X_n)\right\rangle &=        \left\langle\frac{\theta}{\left\Vert\theta\right\Vert}, \sum_{i=1}^{n}x_i-n\,\nabla c(\theta) -\nabla g(\theta)\right\rangle\\
        &=\frac{1}{\left\Vert\theta\right\Vert}\left(\sum_{i=1}^{n}\theta^{\mathsf{T}}x_i-n\,\theta^{\mathsf{T}}\nabla c(\theta)-\theta^{\mathsf{T}}\nabla g(\theta)\right)\\
        &=\frac{1}{\left\Vert\theta\right\Vert}\Bigg(\sum_{i=1}^{n}\theta^{\mathsf{T}}x_i-n\,\theta^{\mathsf{T}}\nabla c(0)- n\, \theta^{\mathsf{T}}\nabla^2 c(\xi_{1,\theta})\theta\\
        &\quad \quad -\theta^{\mathsf{T}}\nabla g(0)- \theta^{\mathsf{T}}\nabla^2 g(\xi_{2,\theta})\theta\Bigg)
    \end{align*}
    where $\xi_{1,\theta}, \xi_{2,\theta}$ are values in the line joining $0$ and $\theta$.
    Note that since $c(\theta)$ is convex \citep[Theorem 7.1]{barndorff2014information}, $\theta^{\mathsf{T}}\nabla^2 c(\xi_{1,\theta})\theta\ge 0$. Also since $g(\cdot)$ is strongly convex,  $\theta^{\mathsf{T}}\nabla^2 g(\xi_{2,\theta})\theta\ge m \, |\theta|^2$. Therefore
    \begin{align*}
     \left\langle\frac{\theta}{\left\Vert\theta\right\Vert}, \nabla \log f(\theta\mid X_1,X_2,\cdots,X_n)\right\rangle &\le    \frac{1}{\left\Vert\theta\right\Vert}\left(\sum_{i=1}^{n}\theta^{\mathsf{T}}x_i-n\,\theta^{\mathsf{T}}\nabla c(0)- \theta^{\mathsf{T}}\nabla g(0)- m\,\left\Vert\theta\right\Vert^2\right)\\
     &=\underset{I}{\underline{\frac{1}{\left\Vert\theta\right\Vert}\left(\sum_{i=1}^{n}\theta^{\mathsf{T}}x_i-n\,\theta^{\mathsf{T}}\nabla c(0)- \theta^{\mathsf{T}}\nabla g(0)\right)}}-\underset{II}{\underline{m\,\left\Vert\theta\right\Vert}}
    \end{align*}
    Note that 
    \begin{align*}
        \left\Vert I\right\Vert \le \sum_{i=1}^{n}\left\Vert x_i\right\Vert +n\left\Vert\nabla c(0)\right\Vert+\left\Vert\nabla g(0)\right\Vert
    \end{align*}
    and 
    $II \to \infty$ as $\Vert \theta\Vert\to \infty$.
    
    Thus,
    \[\limsup_{\Vert\theta\Vert\to \infty}    \left\langle\frac{\theta}{\left\Vert\theta\right\Vert}, \nabla \log f(\theta\mid X_1,X_2,\cdots,X_n)\right\rangle =-\infty.\]
    Hence the posterior is superexponentially light. Also it is easy to see 
    \[-\nabla^2 \log f(\theta\mid X_1,X_2,\cdots,X_n)=n\, \nabla^2 c(\theta)+\nabla^2 g(\theta) >0\]
    and hence the density is log-concave. Therefore the proof follows from Proposition~\ref{prop:key} and Theorem~\ref{thm:rwmh_ge}.

\section{Proofs for Section~\ref{sec:main_results}}

\subsection{Drift and Minorization for RWMH}
\label{sec:dm}

A preliminary result is required before turning to the proofs of Propositions~\ref{prop:driftJH} and~\ref{prop:minorJH}.
\begin{lemma}
\label{volumelemma}
Let Assumption~\ref{assm:cone} hold, with $K\le 1/3$ and $\epsilon_{\alpha}<\tilde{\epsilon}_{\alpha}\wedge 1/3$. Then for  
$$R_{\alpha}=K\left(1-\epsilon_{\alpha}^2/8\right)\left[1-\left(1-\epsilon_{\alpha}^2/8\right)^2\right]^{1/2}\left\{1+\left[1-\left(1-\epsilon_{\alpha}^2/8\right)^2\right]^{1/2}\right\}^{-1}$$
and $|x|>M_p$,
\begin{align*}
Q(x,A(x))\ge Q\left(0,\left[-\frac{R_{\alpha}}{\sqrt{p}}-1,\frac{R_{\alpha}}{\sqrt{p}}-1\right]\times \left[-\frac{R_{\alpha}}{\sqrt{p}},\frac{R_{\alpha}}{\sqrt{p}}\right]^{p-1}\right)>0.
\end{align*}
\end{lemma}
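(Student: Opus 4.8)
The plan is to exploit Assumption~\ref{assm:cone}, which guarantees that for $\|x\| > M_p$ there is a boundary point $y \in \partial\overline{A(x)}$ with $W^K_{\epsilon_\alpha}(y) \subset A(x)$, and then to bound $Q(x, A(x))$ from below by $Q(x, W^K_{\epsilon_\alpha}(y))$. Since the proposal is a symmetric random walk, $Q(x, A(x)) = \int_{A(x)} q(\|x - z\|)\, dz$, and after the translation $u = z - x$ this becomes $\int_{A(x) - x} q(\|u\|)\, du$. The first step is therefore to understand the geometry of $W^K_{\epsilon_\alpha}(y) - x$: it is a cone with apex at $y - x$, opening in the direction $-\xi$ for $\xi$ near $y/\|y\|$, with slant length up to $K$ and angular half-width controlled by $\epsilon_\alpha/2$. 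The key geometric step is to show that this translated cone contains an axis-aligned box, specifically (a translate of) $[-R_\alpha/\sqrt p, R_\alpha/\sqrt p]^p$, where $R_\alpha$ is the inradius-type quantity defined in the statement; the shift of the first coordinate by $-1$ presumably accommodates the fact that $y - x$ is at distance roughly $\|x\| - \|y\|$ from the origin, but one only needs a unit-scale slack because the proposal density $q(\|\cdot\|)$ is radially decreasing (Assumption~\ref{assm:prop}), so $q$ evaluated on the shifted box dominates $q$ on the centered box.

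The main computation is the claim that a ball of radius $R_\alpha$ fits inside the cone $W^K_{\epsilon_\alpha}(y)$ (with $y$ translated to sit at the apex). Writing $c = 1 - \epsilon_\alpha^2/8 = \cos\beta$ where $\beta$ is the half-angle of the cone (this identity is exactly the one derived in the proof of Proposition~\ref{prop:key}(i)), the largest ball inscribed in a right circular cone of slant height $K$ and half-angle $\beta$, tangent to the lateral surface and with center on the axis, has radius $r = K \sin\beta\,\cos\beta / (1 + \sin\beta)$; substituting $\cos\beta = c$ and $\sin\beta = (1 - c^2)^{1/2}$ gives precisely
\[
R_\alpha = K\,c\,(1 - c^2)^{1/2}\bigl\{1 + (1 - c^2)^{1/2}\bigr\}^{-1},
\]
matching \eqref{eq:geomradius}. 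The constraints $K \le 1/3$ and $\epsilon_\alpha < \tilde\epsilon_\alpha \wedge 1/3$ are there to keep $\beta$ small enough that $\cos^{-1}(1-\epsilon_\alpha^2/8)$ is a legitimate half-angle in $[0,\pi/2]$ and that the apex stays at distance $O(1)$ from the origin, so the unit shift in the first coordinate is enough. Once the ball $B(z_0, R_\alpha) \subset W^K_{\epsilon_\alpha}(y)$ is in hand, it contains the axis-aligned cube of half-side $R_\alpha/\sqrt p$ centered at $z_0$, since that cube has circumradius $R_\alpha$.

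Finally I would assemble the bound: translating by $-x$ and using that $q(\|\cdot\|)$ is positive everywhere and non-increasing in the radial argument, together with the fact that the translated cube can be taken to lie within the region $[-R_\alpha/\sqrt p - 1, R_\alpha/\sqrt p - 1] \times [-R_\alpha/\sqrt p, R_\alpha/\sqrt p]^{p-1}$ after recentering (the $-1$ absorbing the offset of the apex from the origin, using the geometry that $0 \in A(x)$ for $\|x\|$ large by superexponential lightness), one gets
\[
Q(x, A(x)) \ge \int_{\text{cube}} q(\|u\|)\, du \ge Q\!\left(0, \left[-\tfrac{R_\alpha}{\sqrt p} - 1, \tfrac{R_\alpha}{\sqrt p} - 1\right] \times \left[-\tfrac{R_\alpha}{\sqrt p}, \tfrac{R_\alpha}{\sqrt p}\right]^{p-1}\right),
\]
and this last quantity is strictly positive because $q > 0$ everywhere. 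The main obstacle I anticipate is the careful bookkeeping of \emph{where} the apex $y - x$ sits relative to the origin and justifying that a single unit of slack in one coordinate suffices uniformly in $x$; this is where the dimension-independent choice to shift only one coordinate (rather than inflating the whole box) needs the radial-monotonicity of $q$ and the containment $0 \in A(x)$ to do real work, and getting the constants to line up with \eqref{eq:geomradius} exactly requires the inscribed-ball computation to be done without slack.
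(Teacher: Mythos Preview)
Your high-level plan and your identification of $R_\alpha$ as the inradius of a cone of slant height $K$ and half-angle $\cos^{-1}(1-\epsilon_\alpha^2/8)$ are correct and match the paper. Where you go astray is in the bookkeeping of the apex and the source of the $-1$ shift. The paper simply takes the cone with apex at $x$ itself (this is how the proof reads Assumption~\ref{assm:cone}; note $x\in\partial\overline{A(x)}$ trivially), so there is no separate boundary point $y$ to track and your worry about ``the offset $y-x$'' evaporates. Since $Q(x,W^K_{\epsilon_\alpha}(x))$ depends only on $x/\|x\|$ by radial symmetry of $q$, one may rotate and set $x=e_1=(1,0,\dots,0)$.

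The step you are not articulating correctly is this: with $x=e_1$, the ball inscribed in the cone is centered at $(1-K(1-\epsilon_\alpha^2/8)+R_\alpha,0,\dots,0)$, at distance less than $K\le 1/3$ from $x$. The paper then uses radial monotonicity of $q$ (Assumption~\ref{assm:prop}, miscited in the proof as Assumption~\ref{assm:cone}) to replace this ball by the origin-centered ball $R_\alpha\,\mathbb D^p$, which is \emph{farther} (distance $1$) from $x$ and hence has smaller $Q(x,\cdot)$-mass. Only after this replacement is the cube $\mathbb A=[-R_\alpha/\sqrt p,R_\alpha/\sqrt p]^p$ inscribed and the translation performed:
\[
Q(x,\mathbb A)=Q(0,\mathbb A-e_1)=Q\!\left(0,\left[-\tfrac{R_\alpha}{\sqrt p}-1,\tfrac{R_\alpha}{\sqrt p}-1\right]\times\left[-\tfrac{R_\alpha}{\sqrt p},\tfrac{R_\alpha}{\sqrt p}\right]^{p-1}\right).
\]
So the $-1$ is literally the first coordinate of $x=e_1$; it has nothing to do with ``$0\in A(x)$'' or any apex offset. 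Your direct route (inscribe the cube in the inscribed ball, then translate) would center the cube at first coordinate $-K(1-\epsilon_\alpha^2/8)+R_\alpha$, not $-1$, and would therefore not produce the box in the statement without a further Anderson-type comparison---which is precisely what the paper carries out one step earlier, at the level of balls where it is elementary.
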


\begin{proof}[Proof of Lemma~\ref{volumelemma}]
The main idea of the proof is to find a geometric structure that is invariant as $A(x)$ increases; the volume of which can be easily calculated.
		Let $W^K_{\epsilon_{\alpha}}(x)$ be as in \eqref{eq:cone}. Then by assumption~\ref{assm:cone}, $W^K_{\epsilon_{\alpha}}(x)\subset A(x)$ when $|x|>M_p$ and hence \(Q(x,W^K_{\epsilon_{\alpha}}(x))\le Q(x,A(x))\). Note that the $Q$ volume of $W^K_{\epsilon_{\alpha}}(x)$ does not depend on $x$ due to the structure of $W^K_{\epsilon_{\alpha}}(x)$ and $Q$ being radially invariant.
		
		 Without loss of generality, choose $x=(1,0,0,\cdots,0)^{\mathsf{T}}$. 
		 Observe that $\mathbb{S}^{p-1}$ or $\mathbb{D}^p$ which are respectively the hollow sphere centred at the origin with radius $1$ and the closed ball with radius $1$ both pass through $x$. 
 Consider the set of $(\xi_1,\xi_2,\cdots,\xi_p)$ such that
\[
\left\{ 1-K\left(1-\epsilon_{\alpha}^2/8\right)<\xi_1<1, \ 
		  \xi_2^2+\xi_3^2+\cdots+\xi_p^2
		 \le \left(1-\xi_1\right)^2\left(1-\left(1-\epsilon_{\alpha}^2/8\right)^2\right)\left(1-\epsilon_{\alpha}^2/8\right)^{-1}\right\}.
\]
Call this set $\mathcal{C}$.
Note that $\mathcal{C} \subset W(x)$. Define 
		\[
		R_{\alpha}=K\left(1-\epsilon_{\alpha}^2/8\right)\left[1-\left(1-\epsilon_{\alpha}^2/8\right)^2\right]^{1/2}\left\{1+\left[1-\left(1-\epsilon_{\alpha}^2/8\right)^2\right]^{1/2}\right\}^{-1}.
		\]
		Define the ball centered at $1-K(1-\epsilon_{\alpha}^2/8)+R_{\alpha}$ with radius $R_{\alpha}$ as \(B(1-K(1-\epsilon_{\alpha}^2/8)+R_{\alpha},R_{\alpha})\). Consider the closed sphere $R_{\alpha}\cdot \mathbb{D}^{p}$ with radius $R_{\alpha}$ centered at the origin. By assumption~\ref{assm:cone}, 
		\[
		Q(x,R_{\alpha}\cdot \mathbb{D}^{p}) \le Q(x,B(1-K(1-\epsilon_{\alpha}^2/8)+R_{\alpha},R_{\alpha}) ).
		\] 
		Hence, 
		\[
		Q(x,R_{\alpha}\cdot \mathbb{D}^{p}) \le Q(x, B(1-K(1-\epsilon_{\alpha}^2/8)+R_{\alpha},R_{\alpha}))\le Q(x,C) \le Q(x,W(x)).
		\] 
		Consider a $p$-dimensional cube 
		\[
		\mathbb{A}=\left[-\frac{R_{\alpha}}{\sqrt{p}},\frac{R_{\alpha}}{\sqrt{p}}\right]^p
		\]
		so that $\mathbb{A} \subset R_{\alpha}\cdot \mathbb{D}^{p}$. This implies
		\[
		Q(x,\mathbb{A})\le Q(x,R_{\alpha}\cdot \mathbb{D}^{p}) \le Q(x, C)\le Q(x,W(x)).
		\]
		Now, from the definition of the proposal, one has
		\begin{align*}
		Q(x,\mathbb{A}) &=  \int_{-\frac{R_{\alpha}}{\sqrt{p}}}^{\frac{R_{\alpha}}{\sqrt{p}}}\int_{-\frac{R_{\alpha}}{\sqrt{p}}}^{\frac{R_{\alpha}}{\sqrt{p}}}\cdots \int_{-\frac{R_{\alpha}}{\sqrt{p}}}^{\frac{R_{\alpha}}{\sqrt{p}}} q\left(\left(\left(\xi_1-1\right)^2+\xi^2_2+\cdots\xi^2_p\right)^{1/2}\right) d\xi_1 d\xi_2 \cdots d\xi_p\\
		&= \int_{-\frac{R_{\alpha}}{\sqrt{p}}-1}^{\frac{R_{\alpha}}{\sqrt{p}}-1}\int_{-\frac{R_{\alpha}}{\sqrt{p}}}^{\frac{R_{\alpha}}{\sqrt{p}}}\cdots \int_{-\frac{R_{\alpha}}{\sqrt{p}}}^{\frac{R_{\alpha}}{\sqrt{p}}} q\left(\left(\xi_1^2+\xi^2_2+\cdots\xi^2_p\right)^{1/2}\right) d\xi_1 d\xi_2 \cdots d\xi_p\\
		&=Q\left(0,\left[-\frac{R_{\alpha}}{\sqrt{p}}-1,\frac{R_{\alpha}}{\sqrt{p}}-1\right]\times \left[-\frac{R_{\alpha}}{\sqrt{p}},\frac{R_{\alpha}}{\sqrt{p}}\right]^{p-1}\right).
		\end{align*}
		By assumption \ref{assm:cone} this quantity is greater than zero as the Lebesgue measure of this set is positive. Since \(Q(x,A(x))\ge Q(x,W(x)) \) we are done.
	\end{proof}
\begin{remark}
    Note that taking $K\le 1/3$ does not effect the generality of the proof as if $K>1/3$, we can just select $K=1/3$ and then we are done.
\end{remark}
\begin{proposition}
\label{prop:driftJH}
Let Assumptions~\ref{assm:super}, \ref{assm:cone}, \ref{assm:envelope} and ~\ref{assm:prop} hold. Then, for any $\epsilon$ as defined in \eqref{eps_def_1}, there exists $\mathcal{R}_{\epsilon} > M_p$ such that if $V(x)=\left[f(x)\right]^{-1/2}$ and $b=3\exp\{\tilde{f}(\mathcal{R}_{\epsilon})/2\}$, then
	\begin{align*}
	PV(x)\le \tilde{\lambda}V(x)+b .
	\end{align*}
\end{proposition}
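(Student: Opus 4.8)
The goal is the geometric drift inequality $PV(x)\le\tilde\lambda V(x)+b$ for $V(x)=f(x)^{-1/2}$, which by the standard theory requires: (i) a bound of the form $PV(x)\le\tilde\lambda V(x)$ outside a large ball $\{\|x\|>\mathcal R_\epsilon\}$, and (ii) a uniform bound $PV(x)\le b$ on the compact set $\{\|x\|\le\mathcal R_\epsilon\}$, so that adding the two pieces yields the claimed inequality everywhere with the stated constants. I would organize the proof around these two regimes.

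\textbf{Step 1: the tail estimate.} For $\|x\|$ large I would start from the exact expression
\[
\frac{PV(x)}{V(x)}=\int_{A(x)}q(x,y)\,dy+\int_{R(x)}\sqrt{\frac{f(x)}{f(y)}}\,\frac{f(y)}{f(x)}q(x,y)\,dy+\int_{R(x)}\Bigl(1-\frac{f(y)}{f(x)}\Bigr)q(x,y)\,dy,
\]
where $R(x)=A(x)^c$, following the decomposition already used in the proof of Theorem~\ref{thm:rwmh_ge}. The superexponential condition (Assumption~\ref{assm:super}) forces the two integrals over the rejection/rejection-weighted regions to vanish as $\|x\|\to\infty$, exactly as cited from \cite{jarn:hans:2000}; the surviving term is $Q(x,A(x))$. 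By Lemma~\ref{volumelemma}, for $\|x\|>M_p$ this is bounded below by $Q\bigl(0,[-R_\alpha/\sqrt p-1,R_\alpha/\sqrt p-1]\times[-R_\alpha/\sqrt p,R_\alpha/\sqrt p]^{p-1}\bigr)$, a constant independent of $x$. Hence $PV(x)/V(x)\le 1-Q(0,\cdot)+o(1)$; the extra slack $1/(1+\epsilon_\alpha)$ and the additive $o(1)$ term (controlled using Assumption~\ref{assm:envelope} to turn ``superexponentially light'' into a quantitative rate via $f_s$ and Assumption~\ref{assm:super'}) are absorbed to give precisely $PV(x)\le\tilde\lambda V(x)$ once $\|x\|>\mathcal R_\epsilon$, where $\mathcal R_\epsilon$ is chosen exactly so that all the error terms are below the prescribed $\epsilon$-thresholds — this is where the various maxima in the definition \eqref{R:eps:def_1} of $\mathcal R_\epsilon$ come from (each branch controls one of the competing error terms, including the rejection integral, the ``weighted acceptance outside $A(x)$'' piece, and the requirement $\mathcal R_\epsilon>M^*$).

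\textbf{Step 2: the compact estimate.} On $\{\|x\|\le\mathcal R_\epsilon\}$ I would bound $PV(x)\le\sup_{\|x\|\le\mathcal R_\epsilon}\int\sqrt{f(x)/f(y)}\,\alpha(x,y)q(x,y)\,dy+\sup V(x)$ crudely. Using $|\log f|\le\tilde f(\|\cdot\|)$ (Assumption~\ref{assm:envelope}), on the ball $f(x)\ge\exp\{-\tilde f(\mathcal R_\epsilon)\}$, so $V(x)=f(x)^{-1/2}\le\exp\{\tilde f(\mathcal R_\epsilon)/2\}$; the acceptance-weighted integral is likewise dominated because $\alpha(x,y)\le1$ and $\alpha(x,y)\sqrt{f(x)/f(y)}=\sqrt{f(y)/f(x)}\wedge\sqrt{f(x)/f(y)}\le\sqrt{f(x)/f(y)}$ does not help directly, so instead I would split $R^p$ into $A(x)$ (where the weighted integrand is $\le\sqrt{f(x)/f(y)}\le1$, contributing $\le1$) and $R(x)$ (where $\alpha(x,y)=f(y)/f(x)<1$ and the integrand is $\sqrt{f(y)/f(x)}\le1$, again contributing $\le1$ in $q$-measure), giving $PV(x)\le 2+\exp\{\tilde f(\mathcal R_\epsilon)/2\}\le 3\exp\{\tilde f(\mathcal R_\epsilon)/2\}=b$ since $\tilde f\ge0$ makes the exponential $\ge1$. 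Combining Steps 1 and 2 over the two regions yields $PV(x)\le\tilde\lambda V(x)+b$ for all $x$.

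\textbf{Main obstacle.} The genuine work is Step 1: making the qualitative ``$PV/V\to Q(x,A(x))$'' statement of \cite{jarn:hans:2000} quantitative, i.e. exhibiting an \emph{explicit} radius $\mathcal R_\epsilon$ past which every error term is below a prescribed level. This forces one to control $\int_{R(x)}\bigl(1-f(y)/f(x)\bigr)q(x,y)\,dy$ and $\int_{R(x)}\sqrt{f(y)/f(x)}\,q(x,y)\,dy$ by explicit functions of $\|x\|$, which is where Assumption~\ref{assm:super'} (the rate $f_s$, constant $C_1$) and Assumption~\ref{assm:envelope} (the envelope $\tilde f$, hence bounds on $p^*=\max f$) are both needed, and why $\mathcal R_\epsilon$ in \eqref{R:eps:def_1} is a maximum of four competing expressions — each controlling a distinct source of error (the exponential-tail decay of the rejection contribution via $f_s^{-1}$, the geometric cone volume via $\delta$ and $C_B(p)$, the normalization against $p^*$, and the baseline requirement $\mathcal R_\epsilon>M^*$). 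Everything else is bookkeeping with constants that have been pre-packaged in the statement.
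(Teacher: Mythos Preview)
Your two-regime skeleton (tail estimate on $\{\|x\|>\mathcal R_\epsilon\}$, crude bound on $\{\|x\|\le\mathcal R_\epsilon\}$) is exactly what the paper does. But you are proving considerably more than the statement asks, and in doing so you invoke an assumption that is \emph{not} among the hypotheses.

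Read the proposition again: it says ``there \emph{exists} $\mathcal R_\epsilon>M_p$ such that \dots'', and its hypotheses are Assumptions~\ref{assm:super}, \ref{assm:cone}, \ref{assm:envelope}, \ref{assm:prop}. Assumption~\ref{assm:super'} (the quantitative rate $f_s$, constant $C_1$) is \emph{not} assumed here. The paper's proof of this proposition is accordingly much shorter than your plan: it simply quotes the Jarner--Hansen fact $\limsup_{\|x\|\to\infty}PV(x)/V(x)\le\limsup_{\|x\|\to\infty}(1-Q(x,A(x)))$, so for the given $\epsilon$ there \emph{exists} some $\mathcal R_\epsilon$ with $PV(x)/V(x)\le 1-Q(x,A(x))+\epsilon$ once $\|x\|>\mathcal R_\epsilon$; then Lemma~\ref{volumelemma} and the choice of $\epsilon$ give $PV(x)/V(x)\le\tilde\lambda$. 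No explicit formula for $\mathcal R_\epsilon$ is produced here. The explicit $\mathcal R_\epsilon$ of \eqref{R:eps:def_1} that you are trying to derive is the content of the separate Proposition~\ref{prop:R_eps}, which \emph{does} add Assumption~\ref{assm:super'}. So your ``Main obstacle'' paragraph is real work, but it belongs to a different proposition; for this one it is unnecessary, and as written your argument relies on $f_s,C_1$ that you do not have.

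Two smaller points. First, your ``exact expression'' for $PV(x)/V(x)$ has the wrong integrand over $A(x)$: it should be $\int_{A(x)}(f(x)/f(y))^{1/2}q(x,y)\,dy$, not $\int_{A(x)}q(x,y)\,dy$; and the ``surviving term'' as $\|x\|\to\infty$ is $Q(x,R(x))=1-Q(x,A(x))$, not $Q(x,A(x))$ (you arrive at the correct inequality $PV/V\le 1-Q(0,\cdot)+o(1)$ anyway, so the confusion is only in the narration). Second, in Step~2 you oscillate between bounding $PV(x)$ and $PV(x)/V(x)$: the clean route, which the paper takes, is to observe directly from
\[
\frac{PV(x)}{V(x)}=\int_{A(x)}\Bigl(\tfrac{f(x)}{f(y)}\Bigr)^{1/2}q(x,y)\,dy+\int_{R(x)}\Bigl[1+\Bigl(\tfrac{f(y)}{f(x)}\Bigr)^{1/2}-\tfrac{f(y)}{f(x)}\Bigr]q(x,y)\,dy
\]
that $PV(x)/V(x)\le 3$ for every $x$, and then multiply by $\sup_{\|x\|\le\mathcal R_\epsilon}V(x)\le\exp\{\tilde f(\mathcal R_\epsilon)/2\}$ from Assumption~\ref{assm:envelope} to get $b$.
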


\begin{proof}
The main idea is to exploit a previous result in obtaining key quantities using which we can establish asymptotic bounds quantitatively.
From previous work \cite{jarn:hans:2000}, 
\[
\limsup_{\|x\|\to \infty} \frac{PV(x)}{V(x)} \le \limsup_{\|x\|\to \infty}\left( 1-Q(x,A(x))\right).
\]
Thus for any $\epsilon>0$, there exists $\mathcal{R}_{\epsilon}$ such that if $\|x\| > \mathcal{R}_{\epsilon}$, then
\[
\frac{PV(x)}{V(x)}\le  \limsup_{\|x\|\to \infty} \left(1-Q(x,A(x))\right)+\epsilon.
\]
Notice that we can and will take $\mathcal{R}_{\epsilon} \ge M_p$.

On the set $\|x\|>\mathcal{R}_{\epsilon}$ using Lemma~\ref{volumelemma} in the last inequality, obtain
\begin{align*}
\frac{PV(x)}{V(x)} &\le \limsup_{\|x\|\to \infty} \left(1-Q(x,A(x))\right)+\epsilon\\
&\le \sup_{\|x\|>\mathcal{R}_{\epsilon}} \left(1-Q(x,A(x))\right)+\epsilon\\
&\le \sup_{\|x\|>M_p} \left(1-Q(x,A(x))\right)+\epsilon \\
& \le 1- Q\left(0,\left[-\frac{R_{\alpha}}{\sqrt{p}}-1,\frac{R_{\alpha}}{\sqrt{p}}-1\right]\times \left[-\frac{R_{\alpha}}{\sqrt{p}},\frac{R_{\alpha}}{\sqrt{p}}\right]^{p-1}\right) + \epsilon
\end{align*}

Now choose 
\[
\epsilon=\frac{\epsilon_{\alpha}}{1+\epsilon_{\alpha}}\,Q\left(0,\left[-\frac{R_{\alpha}}{\sqrt{p}}-1,\frac{R_{\alpha}}{\sqrt{p}}-1\right]\times \left[-\frac{R_{\alpha}}{\sqrt{p}},\frac{R_{\alpha}}{\sqrt{p}}\right]^{p-1}\right)\]
and 
\[
\tilde{\lambda} = 1 - \frac{1}{1 + \epsilon_{\alpha}} Q\left(0, \left[-\frac{R_{\alpha}}{\sqrt{p}}-1, \frac{R_{\alpha}}{\sqrt{p}}-1 \right] \times \left[-\frac{R_{\alpha}}{\sqrt{p}},\frac{R_{\alpha}}{\sqrt{p}}\right]^{p-1}\right)
\]
so that for $\|x\|>\mathcal{R}_{\epsilon}$,
\[
PV(x) \le \tilde{\lambda} V(x).
\]

Consider $\|x\| \le \mathcal{R}_{\epsilon}$. We know that \citep[][p. 348]{jarn:hans:2000}  by defining
\[
b  =\sup_{x \in B(0,\mathcal{R}_{\epsilon})} V(x) \cdot \sup \frac{PV(x)}{V(x)},
\]
 on $\|x\| \le \mathcal{R}_{\epsilon}$, 
\[
PV(x) \le b .
\]
Hence, for all $x$,
\[
PV(x) \le \tilde{\lambda}V(x)+b.
\]

By assumption~\ref{assm:envelope}
\begin{align*}
		\exp\left(-\frac{1}{2}\tilde{f}(\Vert x \Vert)\right) \le V(x) \le \exp\left(\frac{1}{2} \tilde{f}(\Vert x \Vert)\right).
\end{align*}
and hence
\[
\sup_{x \in B(0,\mathcal{R}_{\epsilon})} V(x) \le \exp\left(\frac{1}{2} \tilde{f}(\mathcal{R}_{\epsilon})\right)
\]

Note that 
\begin{align*}
    \frac{PV(x)}{V(x)}=\int_{A(x)} \left(\frac{f(x)}{f(y)}\right)^{1/2} q(x,y) dy +\int_{R(x)} \left[1+\left(\frac{f(y)}{f(x)}\right)^{1/2}-\frac{f(y)}{f(x)}\right] q(x,y)dy.
\end{align*}
Hence $PV(x)/V(x)\le 3$ and as a consequence,
\[
b \le 3\exp\left(\frac{1}{2} \tilde{f}(\mathcal{R}_{\epsilon})\right),
\]
which completes the proof.
\end{proof}

\begin{proposition}
\label{prop:minorJH}
If Assumptions~\ref{assm:super}, \ref{assm:cone}, \ref{assm:envelope} and ~\ref{assm:prop} hold, there exists $\mathcal{R}_{\max}>0$ such that if  $V(x)=\left[f(x)\right]^{-1/2}$ and
	\[
	\tilde{\eta}=\exp\left\{-2 \,\tilde{f}(\mathcal{R}_{\max})\right\} q(\mathcal{R}_{\max}) \,\mu^{Leb}(B(0,\mathcal{R}_{\max})),
	\]
	then, there is a probability measure $\nu$ such that
	\begin{align*}
	P(x,\cdot)\ge \tilde{\eta} \,\nu(\cdot) \quad \quad ~x \in \{ V(x) \le (2b+\epsilon_{\alpha})/(1-\tilde{\lambda})\}.
	\end{align*}
\end{proposition}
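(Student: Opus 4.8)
The plan is to establish the minorization by localizing to the small set $C=\{x:V(x)\le (2b+\epsilon_{\alpha})/(1-\tilde\lambda)\}$ and bounding the RWMH kernel below on $C$ by a fixed multiple of the uniform distribution on a Euclidean ball. Since $V(x)=f(x)^{-1/2}$, the set $C$ is exactly the superlevel set $\{x:f(x)\ge((1-\tilde\lambda)/(2b+\epsilon_{\alpha}))^{2}\}$. Assumption~\ref{assm:super} forces $\log f(x)\to-\infty$ as $\|x\|\to\infty$ (integrating $\langle x/\|x\|,\nabla\log f(x)\rangle$ along a ray gives $\log f(x)\le c_0-\|x\|$ for $\|x\|$ large), and $f$ is continuous, so $C$ is compact. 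I would then choose $\mathcal{R}_{\max}>0$ large enough that $C\subseteq B(0,\mathcal{R}_{\max})$ and, simultaneously, large enough to dominate $M_p$ and the radius $\mathcal{R}_{\epsilon}$ of Proposition~\ref{prop:driftJH}, so that the constants $b$, $\tilde\lambda$, $\epsilon_{\alpha}$ there carry over unchanged. The minorizing measure is taken to be normalized Lebesgue measure on that ball, $\nu(\cdot)=\mu^{Leb}(\cdot\cap B(0,\mathcal{R}_{\max}))/\mu^{Leb}(B(0,\mathcal{R}_{\max}))$.

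The estimate is then short. For $x\in C$ and an arbitrary measurable set $A$, discard the holding part of the kernel \eqref{RWMH:kernel} and restrict the proposal integral to $A\cap B(0,\mathcal{R}_{\max})$:
\[
P(x,A)\ \ge\ \int_{A\cap B(0,\mathcal{R}_{\max})}\Big(1\wedge\tfrac{f(y)}{f(x)}\Big)\,q(\|x-y\|)\,dy .
\]
For the Hastings ratio, Assumption~\ref{assm:envelope} together with the monotonicity of $\tilde f$ gives $f(x)\le\exp\{\tilde f(\|x\|)\}\le\exp\{\tilde f(\mathcal{R}_{\max})\}$ and $f(y)\ge\exp\{-\tilde f(\|y\|)\}\ge\exp\{-\tilde f(\mathcal{R}_{\max})\}$ whenever $x,y\in B(0,\mathcal{R}_{\max})$, hence $1\wedge f(y)/f(x)\ge\exp\{-2\tilde f(\mathcal{R}_{\max})\}$ (which is at most $1$ because $\tilde f\ge0$). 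For the proposal density, Assumption~\ref{assm:prop} says $q$ is radially decreasing, so as long as $\mathcal{R}_{\max}$ has been fixed so that $\|x-y\|\le\mathcal{R}_{\max}$ for all $x\in C$ and $y$ in the support of $\nu$, we get $q(\|x-y\|)\ge q(\mathcal{R}_{\max})$. Combining,
\[
P(x,A)\ \ge\ \exp\{-2\tilde f(\mathcal{R}_{\max})\}\,q(\mathcal{R}_{\max})\,\mu^{Leb}\big(A\cap B(0,\mathcal{R}_{\max})\big)\ =\ \tilde\eta\,\nu(A),
\]
which is the asserted minorization with $\tilde\eta=\exp\{-2\tilde f(\mathcal{R}_{\max})\}\,q(\mathcal{R}_{\max})\,\mu^{Leb}(B(0,\mathcal{R}_{\max}))$; if $C$ is empty there is nothing to prove.

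The one genuinely substantive point is the compactness of $C$, which relies only on superexponential lightness and continuity of $f$. Everything else is the bookkeeping of $\mathcal{R}_{\max}$: it must be chosen at once to (i) contain $C$, (ii) validate the triangle-inequality bound $\|x-y\|\le\mathcal{R}_{\max}$ (so that $q(\|x-y\|)\ge q(\mathcal{R}_{\max})$) for $x\in C$ and $y$ in the support of $\nu$, and (iii) exceed $M_p$ and $\mathcal{R}_{\epsilon}$ so that the drift constants of Proposition~\ref{prop:driftJH} are unaffected. When Assumption~\ref{assm:super'} is also available, this $\mathcal{R}_{\max}$ can be written explicitly as in \eqref{R_1:def_1} via $f_s^{-1}$, and the resulting drift–minorization pair (with drift function $V=f^{-1/2}$ and starting measure $\delta_x$) is exactly what Theorem~\ref{thm:main_result} feeds into Rosenthal's bound.
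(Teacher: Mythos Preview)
Your argument is essentially the paper's: localize the small set $\{V\le(2b+\epsilon_{\alpha})/(1-\tilde\lambda)\}=\{f\ge((1-\tilde\lambda)/(2b+\epsilon_{\alpha}))^{2}\}$ inside a Euclidean ball using superexponential lightness, take $\nu$ to be normalized Lebesgue measure on that ball, drop the holding part of the kernel, and bound $1\wedge f(y)/f(x)$ via the envelope Assumption~\ref{assm:envelope} and $q(\|x-y\|)$ via the radial monotonicity of Assumption~\ref{assm:prop}. The paper does exactly this, with the only cosmetic difference that it first enlarges the small set to the closed ball $C=\overline{B(0,\mathcal{R}_{\max})}$ and establishes the minorization for all $x\in C$ rather than just for $x$ in the original sublevel set.

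One point worth flagging: your condition (ii), that $\|x-y\|\le\mathcal{R}_{\max}$ for $x\in C$ and $y\in\operatorname{supp}\nu=B(0,\mathcal{R}_{\max})$, cannot be arranged by simply taking $\mathcal{R}_{\max}$ large, since both sets scale with $\mathcal{R}_{\max}$ and the diameter is $2\mathcal{R}_{\max}$. The paper's proof has the same slip (it asserts $\inf_{x,y\in C}q(x,y)\ge q(\mathcal{R}_{\max})$ where $q(2\mathcal{R}_{\max})$ is what the triangle inequality actually gives). This is a harmless constant in the minorization and does not affect the structure of either argument.
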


\begin{proof}
The current result relies on establishing a minorization on a ball with a radius large enough to contain the set \[\left\{ V(x) \le\frac{ (2b+\epsilon_{\alpha})}{(1-\tilde{\lambda})}\right\}.\]
If $d>2b / (1-\tilde{\lambda})$, we establish a  minorization on $\left\{V(x)<d\right\}$. Note that for any such $d$, $B(0,\mathcal{R}_{\epsilon}) \subset \left\{V(x)<d\right\}$. Choose $d=(2b+\epsilon_{\alpha})/(1-\tilde{\lambda})$ and note that
\[
\left\{V(x)<\frac{2b+\epsilon_{\alpha}}{1-\tilde{\lambda}}\right\}=\left\{\frac{\left(1-\tilde{\lambda}\right)^2}{\left(2b+\epsilon_{\alpha}\right)^2}<f(x)\right\}.
\]
By assumption $f(x)$ is superexponentially light so the boundary of this set has a polar representation where radial function is continuous \cite{jarn:hans:2000}. Hence there exists $\mathcal{R}_{\max}$ such that 
\[
\left\{\frac{\left(1-\tilde{\lambda}\right)^2}{\left(2b+\epsilon_{\alpha}\right)^2}<f(x)\right\}\subset B(0,\mathcal{R}_{\max}).
\] 

Indeed, one can choose a $0<\delta<(1-\tilde{\lambda})^2 / (2b+\epsilon_{\alpha})^2$ such that
\[\left\{\frac{\left(1-\tilde{\lambda}\right)^2}{\left(2b+\epsilon_{\alpha}\right)^2}<f(x)\right\} \subset \left\{\delta \le f(x)\right\} .\]  

Note that $B(0,\mathcal{R}_{\epsilon}) \subset B(0,\mathcal{R}_{\max})$. Let  $C=\overline{B(0,\mathcal{R}_{\max})}$ be the closed ball of radius $\mathcal{R}_{\max}$ with center at $0$. Take any $A\subset C$. Take any $x\in C$.
Note that 
\begin{align*}
    P(x,A) &\ge \int_A q(x,y) \left\{\frac{f(y)}{f(x)}\wedge 1\right\} dy\\
    &\ge \frac{\inf_{x \in C} f(x) }{\sup_{x \in C} f(x) } \int_A q(x,y) dy\\
    &\ge   \frac{\inf_{x \in C} f(x) }{\sup_{x \in C} f(x) } \inf_{x,y \in C} q(x,y) \int_A dy\\
    &\ge \frac{\inf_{x \in C} f(x) }{\sup_{x \in C} f(x) } \inf_{x,y \in C} q(x,y) \mu^{Leb}(A)\\
    &= \frac{\inf_{x \in C} f(x) }{\sup_{x \in C} f(x) }  \inf_{x,y \in C} q(x,y) \,\mu^{Leb}(C) \,\frac{\mu^{Leb}(A)}{\mu^{Leb}(C)}.
\end{align*}
Now, using assumption~\ref{assm:envelope},
\begin{align*}
\inf_{x \in C} f(x) \ge \exp\left(-\tilde{f}(\mathcal{R}_{\max})\right)
\end{align*}
and,
\begin{align*}
\sup_{x \in C} f(x) \le \exp\left( \tilde{f}(\mathcal{R}_{\max})\right)
\end{align*}
This implies 
\begin{align*}
P(x,A)\ge \exp\left\{-2 \, \tilde{f}(\mathcal{R}_{\max})\right\} \inf_{x,y \in C} q(x,y) \,\mu^{Leb}(C) \,\frac{\mu^{Leb}(A)}{\mu^{Leb}(C)}.
\end{align*}
Note that by assumption~\ref{assm:cone}, $\inf_{x,y \in C} q(x,y) \ge q(\mathcal{R}_{\max})$ since $q$ is radially decreasing.\\
Define the measure $\nu(\cdot)$ as 
\begin{align*}
    \nu(B)&=\frac{\mu^{Leb}(B\cap C)}{\mu^{Leb}(C)} 
\end{align*}
for any $B \in \mathcal{B}$, the Borel sigma field in question.\\
So, for any $A$, measurable, 
\begin{align*}
    P(x,A) &= P(x,A\cap C)+P(x,A\cap C^c)\\
    &\ge \exp\left\{-2 \,\tilde{f}(\mathcal{R}_{\max})\right\} q(\mathcal{R}_{\max}) \,\mu^{Leb}(C) \,\nu (A)
\end{align*}
Hence considering $\tilde{\eta}=\exp\left\{-2 \,\tilde{f}(\mathcal{R}_{\max})\right\} q(\mathcal{R}_{\max}) \,\mu^{Leb}(C)$ suffices.
\end{proof}

\subsection{Preliminary results for Section~\ref{sec:main_results}}

Next we present important Lemmas critical to obtain the rates for geometric ergodicity under assumptions~\ref{assm:cone}-\ref{assm:super'}.
\begin{lemma}\label{lemmadecay1}
 Under assumption~\ref{assm:super'}, 
 \begin{align*}
     \frac{f\left(x+t\frac{x}{\|x\|}\right)}{f(x)} < \epsilon 
 \end{align*}
 for any $t>0$ if \(\Vert x\Vert >\max (f_s^{-1}(C_1-\log \epsilon/t),M_s)\) where $M_s$ is as defined in assumption~\ref{assm:super'}.
\end{lemma}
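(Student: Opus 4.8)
The plan is to reduce the statement to a one-dimensional estimate along the ray emanating from $x$ in the direction $x/\|x\|$. Set $x_s = x + s\,\dfrac{x}{\|x\|} = \bigl(1 + s/\|x\|\bigr)x$ for $s \in [0,t]$, and define $\phi(s) = \log f(x_s)$; the ratio to be bounded is exactly $\exp\{\phi(t) - \phi(0)\}$. Since $s \ge 0$ and $\|x\| > 0$, each $x_s$ is a positive multiple of $x$, so $x_s/\|x_s\| = x/\|x\|$ and $\|x_s\| = \|x\| + s$.

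First I would differentiate: by the chain rule $\phi'(s) = \bigl\langle \nabla \log f(x_s),\, x/\|x\| \bigr\rangle = \bigl\langle x_s/\|x_s\|,\, \nabla \log f(x_s) \bigr\rangle$. Provided $\|x\| > M_s$ we have $\|x_s\| = \|x\| + s > M_s$ for every $s \in [0,t]$, so Assumption~\ref{assm:super'} applies at each $x_s$ and yields $\phi'(s) \le C_1 - f_s(\|x\| + s) \le C_1 - f_s(\|x\|)$, the second inequality using that $f_s$ is increasing. Integrating over $[0,t]$ gives $\phi(t) - \phi(0) \le t\bigl(C_1 - f_s(\|x\|)\bigr)$, hence
\[
\frac{f\!\left(x + t\,\frac{x}{\|x\|}\right)}{f(x)} \le \exp\Bigl\{ t\bigl(C_1 - f_s(\|x\|)\bigr) \Bigr\}.
\]
The conclusion follows by requiring the exponent to be strictly below $\log \epsilon$, i.e. $f_s(\|x\|) > C_1 - (\log\epsilon)/t$; since $f_s$ is continuous and strictly increasing this is equivalent to $\|x\| > f_s^{-1}\!\bigl(C_1 - (\log\epsilon)/t\bigr)$, and intersecting with $\|x\| > M_s$ produces the threshold stated in the lemma.

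There is no real obstacle here; the one point requiring a word of care is the well-definedness of $f_s^{-1}$ at the argument $C_1 - (\log\epsilon)/t$, which is harmless because Assumption~\ref{assm:super'}, being a strengthening of the superexponential condition, forces $f_s(u)\to\infty$, so $f_s(\|x\|)$ eventually exceeds any fixed level. Alternatively one could state the conclusion directly as holding whenever $f_s(\|x\|) > C_1 - (\log\epsilon)/t$ and $\|x\| > M_s$, which avoids inverting $f_s$ altogether.
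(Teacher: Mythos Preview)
Your proof is correct and follows essentially the same approach as the paper: both parametrize the ray $s\mapsto x+s\,x/\|x\|$, observe that the unit direction is preserved and the norm grows as $\|x\|+s$, apply Assumption~\ref{assm:super'} pointwise along the ray, and integrate to obtain the bound $\exp\{t(C_1-f_s(\|x\|))\}$. The only cosmetic difference is that the paper parametrizes over $u\in[0,1]$ with $x+ut\,n(x)$ whereas you use $s\in[0,t]$; your added remark on the well-definedness of $f_s^{-1}$ is in fact handled in the paper by a later convention.
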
 
\begin{proof}
 Denote by $n(x)$ the quantity $x/\Vert x\Vert$.
  We have 
  \begin{align*}
  \frac{f(x+t\, n(x))}{f(x)}&=\exp\left\{\log f(x+t\, n(x))-\log f(x)\right\}\\
  &=\exp\left\{\int_{0}^{1}t\left\langle n(x),\nabla \log f((1-u)\,x+u(x+t\,n(x)))\right\rangle du\right\}\\
  &=\exp\left\{\int_{0}^{1}t\left\langle n(x),\nabla \log f(x+ut\,n(x))\right\rangle du\right\}.
  \end{align*}
  Note that \(y=x+ut\,n(x)=(1+ut/\Vert x\Vert)\,x\). This implies that \(\Vert y\Vert=\Vert x\Vert +ut\). It can also be seen that  \(n(y)=y/\Vert y\Vert=x/\Vert x\Vert=n(x)\). Note that if \(\Vert x\Vert>M_s\), then \(\Vert y\Vert >M_s\) since $\|y\|=\|x\|+ut$. Hence 
  \begin{align*}
  \frac{f(x+t\, n(x))}{f(x)}&=\exp\left\{\int_{0}^{1}t\left\langle n(x),\nabla \log f(x+ut\,n(x))\right\rangle du\right\}\\
  &\le \exp\left\{\int_{0}^{1}t\left(C_1-f_s(\Vert x\Vert + ut)\right) du\right\}.
  \end{align*}
  Using the facts that $f_s(\cdot)$ is increasing and $t>0$,
  \begin{align*}
  \frac{f(x+t\, n(x))}{f(x)} &\le \exp\left\{\int_{0}^{1}t\left(C_1-f_s(\Vert x\Vert + ut)\right) du\right\}\\
  &\le \exp\left\{t\,C_1- t\,f_s(\Vert x\Vert)\right\}.
  \end{align*}
  Hence \(\Vert x\Vert >\max (f_s^{-1}(C_1-\log \epsilon/t),M_s)\),
  implies \(t\,C_1-t\, f_s(\Vert x\Vert)<\log \epsilon\). Hence we are done.
\end{proof}
\begin{lemma}\label{lemmadecay2}
Under assumption~\ref{assm:super'}, $f(x)$ is a decreasing function in every direction subject to $\|x\|>\max(f_s^{-1}(C_1),M_s)$.
\end{lemma}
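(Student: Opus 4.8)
The plan is to reuse the integral representation of the log-density ratio already derived in the proof of Lemma~\ref{lemmadecay1}. Fix any $x$ with $\|x\| > \max(f_s^{-1}(C_1), M_s)$ and any unit vector $v \in \mathbb{S}^{p-1}$; I want to show that $t \mapsto f(x + tv)$ is decreasing at $t = 0$, or more robustly that $f(x+tv) < f(x)$ for all $t>0$ small. The subtlety is that Assumption~\ref{assm:super'} only controls the radial derivative $\langle x/\|x\|, \nabla \log f(x)\rangle$, not the derivative in an arbitrary direction $v$. So the honest reading of "decreasing in every direction" must be that $f$ decreases along rays emanating from the origin through $x$ — i.e., the function $t \mapsto f((1+t)x)$ for $t \ge 0$ — which is exactly the quantity bounded in Lemma~\ref{lemmadecay1}. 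I would state the proof in those terms.

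First I would write, as in Lemma~\ref{lemmadecay1} with $n(x) = x/\|x\|$,
\[
\log f(x + t\,n(x)) - \log f(x) = \int_0^1 t\,\bigl\langle n(x), \nabla \log f(x + ut\,n(x))\bigr\rangle\, du .
\]
Then I would observe, exactly as in that proof, that $y = x + ut\,n(x) = (1 + ut/\|x\|)x$ satisfies $\|y\| = \|x\| + ut \ge \|x\| > M_s$ and $n(y) = n(x)$, so Assumption~\ref{assm:super'} applies at each such $y$, giving
\[
\bigl\langle n(x), \nabla \log f(x + ut\,n(x))\bigr\rangle = \bigl\langle n(y), \nabla \log f(y)\bigr\rangle \le C_1 - f_s(\|x\| + ut) \le C_1 - f_s(\|x\|),
\]
using monotonicity of $f_s$. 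Since $\|x\| > f_s^{-1}(C_1)$ and $f_s$ is strictly increasing, $C_1 - f_s(\|x\|) < 0$, so the integrand is strictly negative for every $t > 0$, hence $\log f(x + t\,n(x)) - \log f(x) < 0$. This shows $f$ is strictly decreasing along the outward radial ray through $x$; the same computation with $t < 0$ (as long as $\|x + t\,n(x)\| = \|x\| + t$ stays above the threshold) shows it is increasing as one moves inward, so the claim holds in the intended radial sense.

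The main obstacle is purely interpretive rather than technical: one must be careful that "every direction" cannot literally mean all $v \in \mathbb{S}^{p-1}$, since a superexponential tail bound on the radial derivative says nothing about tangential behavior; the statement should be — and in the proof is — about the radial direction $n(x)$ (equivalently, $f$ is eventually radially unimodal along rays from the origin). No delicate estimates are needed beyond the monotonicity of $f_s$ and the elementary observation that scaling $x$ by a positive factor preserves the direction $n(x)$ and increases the norm. I would also note for later use (it is what Lemma~\ref{lemmadecay1} and the minorization arguments actually need) that this gives $0 \in A(x)^c$-complement-type nesting: $A(x) \subset A(x')$ whenever $x'$ lies on the outward ray through $x$ beyond the threshold radius.
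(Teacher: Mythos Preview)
Your proof is correct and follows essentially the same route as the paper. The paper's proof is a one-liner: it invokes Lemma~\ref{lemmadecay1} and sets $\epsilon=1$, so that the threshold $f_s^{-1}(C_1-\log\epsilon/t)$ collapses to $f_s^{-1}(C_1)$ and the conclusion $f(x+t\,n(x))/f(x)<1$ follows for all $t>0$. You instead rewrite the integral representation from scratch and bound the integrand by $C_1-f_s(\|x\|)<0$ directly, which is exactly the content of Lemma~\ref{lemmadecay1} at $\epsilon=1$; your interpretive remark that ``every direction'' must mean the radial direction $n(x)$ (since Assumption~\ref{assm:super'} gives no tangential control) is a worthwhile clarification that the paper leaves implicit.
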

\begin{proof}
    Using Lemma~\ref{lemmadecay1}, we have $\frac{f\left(x+t\frac{x}{\|x\|}\right)}{f(x)}<\epsilon$ whenever $\|x\|>\max(f_s^{-1}(C_1-\log \epsilon /t),M_s)$. Taking $\epsilon=1$  we have 
    $$\frac{f\left(x+t\frac{x}{\|x\|}\right)}{f(x)}<1$$ Hence we are done.
\end{proof}
Define $C_{f(x)}$ as the set $\left\{y:f(y)=f(x)\right\}$ for some fixed $x$. Also note that $f(x)$ has a maximum which is attained at some point of $\mathbb{R}^p$. This is easy to see using Lemma~\ref{lemmadecay2}. Denote this value by $p^*$. 
\begin{lemma}\label{lemmadecay3}
Under assumptions~\ref{assm:super'}, for $\|x\|>f_s^{-1}(C_1-\log (\epsilon/p^* \wedge 1))\vee M_s+1$,
$$f(x)<\epsilon.$$
\end{lemma}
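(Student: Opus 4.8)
The plan is to reduce the claim to Lemma~\ref{lemmadecay1} by comparing the value of $f$ at $x$ with its value at the point one unit closer to the origin along the same ray, and then bounding that value by the global maximum $p^*$.

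First I would fix $x$ with $\|x\| > f_s^{-1}(C_1-\log(\epsilon/p^* \wedge 1))\vee M_s+1$ and set $y=(\|x\|-1)\,x/\|x\|$, so that $\|y\|=\|x\|-1$ and $y/\|y\|=x/\|x\|$; consequently $x = y + 1\cdot y/\|y\|$. Writing $\epsilon_0=\epsilon/p^* \wedge 1>0$, the hypothesis on $\|x\|$ gives exactly $\|y\|>\max(f_s^{-1}(C_1-\log \epsilon_0),M_s)$, so Lemma~\ref{lemmadecay1}, applied with the point $y$ in place of $x$ and step size $t=1$, yields $f(x)/f(y)=f(y+y/\|y\|)/f(y)<\epsilon_0$.

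Next, since $p^*=\max_{z}f(z)$ (which exists by Lemma~\ref{lemmadecay2}), we have $f(y)\le p^*$, and therefore $f(x)<\epsilon_0\,p^*\le(\epsilon/p^*)\,p^*=\epsilon$, where the last inequality uses $\epsilon_0\le\epsilon/p^*$. When instead $\epsilon\ge p^*$ one has $\epsilon_0=1$ and the inequality $f(x)<\epsilon$ follows directly from $f(x)<f(y)\le p^*\le\epsilon$, so the bound holds in all cases. This completes the argument.

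The only point requiring care is ensuring that the argument $C_1-\log(\epsilon/p^* \wedge 1)$ lies in the range of $f_s$ so that $f_s^{-1}$ is well defined; this is precisely why the truncation $\wedge 1$ is imposed, and it is harmless because for $\epsilon\ge p^*$ the conclusion is immediate anyway. I expect the only mild obstacle to be the bookkeeping between the regimes $\epsilon<p^*$ and $\epsilon\ge p^*$; the analytic content is entirely supplied by Lemma~\ref{lemmadecay1}.
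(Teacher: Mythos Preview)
Your proof is correct and follows essentially the same route as the paper's: define the point $y$ one unit closer to the origin along the ray through $x$, apply Lemma~\ref{lemmadecay1} at $y$ with $t=1$ and tolerance $\epsilon/p^*\wedge 1$, and then bound $f(y)$ by $p^*$. Your explicit handling of the two regimes $\epsilon<p^*$ and $\epsilon\ge p^*$ is in fact slightly tidier than the paper's exposition.
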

\begin{proof}
    Using Lemma~\ref{lemmadecay1} we know that $\|x\|>f_s^{-1}(C_1-\frac{1}{t}\log \epsilon/p^*)$ implies 
    \begin{align*}
        \frac{f\left(x+t\frac{x}{\|x\|}\right)}{f(x)} \le \epsilon/p^*.
    \end{align*}
    Since \(p^*=\max_{x\in\mathbb{R}^p} f(x)\), this implies that $$f\left(x+t\frac{x}{\|x\|}\right)< \epsilon.$$
    Now this holds for any $t>0$. Consider $t=1$ and any $x$ such that $\|x\|>f_s^{-1}(C_1-\log (\epsilon/p^* \wedge 1))\vee M_s+1$. Define \(y=x-\frac{x}{\|x\|}\) which yields \(\|y\|=\|x\|(1-1/\Vert x\Vert)=\|x\|-1\). This also implies \(x=y+\frac{y}{\|y\|}\) as \[x=y+\frac{x}{\|x\|}\] and \(\frac{y}{\|y\|}=\frac{x(1-1/\|x\|)}{\|x\|-1}=\frac{x}{\|x\|}\). Now as $\|x\|>f_s^{-1}(C_1-\log (\epsilon/p^* \wedge 1))\vee M_s+1$,
    \[\left\Vert y\right\Vert+1 \ge f_s^{-1}(C_1-\log (\epsilon/p^* \wedge 1) )\vee M_s+1.\]This implies $\|y\|>f_s^{-1}(C_1-\log (\epsilon/p^* \wedge 1))\vee M_s$ and hence $\frac{f(y+\frac{y}{\|y\|})}{f(y)}<\epsilon/p^*$. This implies 
    \[\frac{f(x)}{f(y)} \le \epsilon/p^* \] which in turn implies 
    $f(x)<\epsilon$ as $p^*$ is maximum of the density $f(\cdot)$.
\end{proof}

\begin{lemma}\label{lemmapartition}
   Under assumptions~\ref{assm:super'} and~\ref{assm:envelope}, with $\|x\|>f_s^{-1}(C_1-\log (\exp\{- \tilde{f}(f_s^{-1}(C_1)\vee M_s)\}/p^*\wedge 1))\vee M_s+1$ , $C_{f(x)}$ partitions $\mathbb{R}^d$ into $A(x)$ and $R(x)$ with $C_{f(x)}$ as the boundary of both $A(x)$ and $R(x)$.
\end{lemma}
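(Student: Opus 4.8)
The plan is to use the fact that the lower bound on $\|x\|$ has been calibrated so that $f(x)$ is forced to be small; the envelope Assumption~\ref{assm:envelope} then pushes the entire level set $C_{f(x)}$ out into the region where $f$ is strictly decreasing along outward radial rays (Lemmas~\ref{lemmadecay1}--\ref{lemmadecay2}), and radial monotonicity immediately exhibits each point of $C_{f(x)}$ as a boundary point of both $A(x)$ and $R(x)$.

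I would first dispose of the routine inclusion. Since $f$ is continuous, $A(x)=\{y:f(y)\ge f(x)\}$ is closed, $R(x)=A(x)^c=\{y:f(y)<f(x)\}$ is open, and $\mathbb{R}^p=A(x)\sqcup R(x)$; moreover $\partial A(x)=\partial R(x)$ (complementary sets have the same boundary) and, by continuity, $\partial A(x)\subseteq A(x)\cap\overline{R(x)}$, on which $f\ge f(x)$ and $f\le f(x)$ respectively, so $\partial A(x)\subseteq C_{f(x)}$. Everything therefore reduces to the reverse inclusion $C_{f(x)}\subseteq\partial A(x)$, i.e.\ to showing that every $y$ with $f(y)=f(x)$ is a limit of points of $R(x)$.

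Abbreviate $\rho_0=f_s^{-1}(C_1)\vee M_s\ (\ge 0)$, so the hypothesis reads $\|x\|>f_s^{-1}\!\bigl(C_1-\log(e^{-\tilde f(\rho_0)}/p^*\wedge 1)\bigr)\vee M_s+1$. Applying Lemma~\ref{lemmadecay3} with $\epsilon=e^{-\tilde f(\rho_0)}$ gives $f(x)<e^{-\tilde f(\rho_0)}$. Now fix $y\in C_{f(x)}$, so $f(y)=f(x)<e^{-\tilde f(\rho_0)}$. Were $\|y\|\le\rho_0$, Assumption~\ref{assm:envelope} and monotonicity of $\tilde f$ would give $|\log f(y)|\le\tilde f(\|y\|)\le\tilde f(\rho_0)$, hence $f(y)\ge e^{-\tilde f(\rho_0)}$ --- a contradiction. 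So $\|y\|>\rho_0=\max(f_s^{-1}(C_1),M_s)\ge 0$; in particular $y\neq 0$. Finally apply Lemma~\ref{lemmadecay1} at $y$ with $\epsilon=1$: for every $t>0$ we have $\|y\|>\max(f_s^{-1}(C_1),M_s)$, so $f\bigl(y+t\,y/\|y\|\bigr)<f(y)=f(x)$, i.e.\ $y_t:=y+t\,y/\|y\|\in R(x)$, and $y_t\to y$ as $t\downarrow 0$. Hence $y\in\overline{R(x)}$; since also $y\in A(x)$, we get $y\in\partial A(x)$. This proves $C_{f(x)}=\partial A(x)=\partial R(x)$ while $\mathbb{R}^p=A(x)\sqcup R(x)$, which is the assertion.

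There is no conceptual obstacle; the only thing requiring care is matching the threshold. One must check that the multiply-nested expression bounding $\|x\|$ below is precisely the hypothesis of Lemma~\ref{lemmadecay3} for the value $\epsilon=e^{-\tilde f(\rho_0)}$, and that it is this particular $\epsilon$ --- small enough to beat the envelope $e^{-\tilde f(\rho_0)}$ on the ball $\{\|\cdot\|\le\rho_0\}$ --- that forces $C_{f(x)}$ outside $\{\|\cdot\|\le\max(f_s^{-1}(C_1),M_s)\}$, after which Lemmas~\ref{lemmadecay1}--\ref{lemmadecay2} close the argument.
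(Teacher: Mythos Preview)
Your proof is correct and follows essentially the same approach as the paper: use Lemma~\ref{lemmadecay3} to force $f(x)<e^{-\tilde f(\rho_0)}$, invoke Assumption~\ref{assm:envelope} to push every point of $C_{f(x)}$ outside the ball of radius $\rho_0=f_s^{-1}(C_1)\vee M_s$, and then use radial monotonicity (Lemma~\ref{lemmadecay1} with $\epsilon=1$, i.e.\ Lemma~\ref{lemmadecay2}) to exhibit each such point as a limit from $R(x)$. The paper's proof is considerably terser and leaves the boundary argument implicit, whereas you spell out the inclusion $C_{f(x)}\subseteq\partial A(x)$ carefully.
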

\begin{proof}
    Note that $f(x)$ is decreasing if $\|x\|>f_s^{-1}(C_1)\vee M_s$ by Lemma~\ref{lemmadecay2}. Consider the set $E=\left\{x: \|x\|\le f_s^{-1}(C_1)\vee M_s \right\}$ which happens to be compact.\\
    Now, by assumption~\ref{assm:envelope} $$\inf_{y \in E} f(y) \ge \exp\left\{-\tilde{f}(f_s^{-1}(C_1)\vee M_s)\right\}.$$ Hence for any $x$ such that $f(x)<\exp\left\{-\tilde{f}(f_s^{-1}(C_1)\vee M_s)\right\}$ , $C_{f(x)}$ will partition the space into $A(x)$ and $R(x)$. By Lemma~\ref{lemmadecay3} taking any $x$ such that $\|x\|>f_s^{-1}(C_1-\log (\exp\{- \tilde{f}(f_s^{-1}(C_1)\vee M_s)\}/p^*\wedge 1))\vee M_s+1$ we are done.
\end{proof}
We define a set on $\mathbb{R}^p$ that is fundamental to the next few steps. Define 
\[C_{f(x)}(\delta)=\left\{y+s\frac{y}{\|y\|}: y \in C_{f(x)}, -\delta \le s \le \delta \right\}.\]
\begin{lemma}\label{lemmaproposal11}
    Fix any $\epsilon>0$. Under assumption~\ref{assm:prop} with $\delta <\frac{\epsilon}{q(0)\, C_B(p) K_{\epsilon}^{p-1}}$ and \[\|x\|>2K^2_{\epsilon}\left[\left(\frac{\epsilon}{q(0)\, C_B(p) \delta}\right)^{1/p-1} -K_{\epsilon}\right]^{-1}+K_{\epsilon}\]
    we have \[Q(x,C_{f(x)}(\delta)\cap B(x,K_{\epsilon})) < \epsilon\]
    where $K_{\epsilon}$ is such that $Q(0,B(0,K_{\epsilon})^c)<\epsilon$.
\end{lemma}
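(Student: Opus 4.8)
The plan is to reduce the estimate to a Lebesgue-volume bound for the thin collar $S:=C_{f(x)}(\delta)\cap B(x,K_\epsilon)$. Since the proposal is radially non-increasing (Assumption~\ref{assm:prop}), $q(\|x-y\|)\le q(0)$ for every $y$, so
\[
Q\bigl(x,\,C_{f(x)}(\delta)\cap B(x,K_\epsilon)\bigr)=\int_{S}q(\|x-y\|)\,dy\le q(0)\,\mu^{Leb}(S),
\]
and it suffices to show $\mu^{Leb}(S)<\epsilon/q(0)$. The intersection with $B(x,K_\epsilon)$ is what makes this a finite, indeed small, quantity: $C_{f(x)}(\delta)$ is an unbounded shell, but $q$ places almost all of its mass inside $B(x,K_\epsilon)$ by the defining property of $K_\epsilon$, so it is the bounded part that must be controlled here.

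First I would record the geometry of $C_{f(x)}(\delta)$. Writing a generic element as $z=y+s\,y/\|y\|$ gives $\|z-y\|=|s|\le\delta$, and in polar coordinates $z=t\xi$ the set $C_{f(x)}(\delta)$ is the collection of radii $t$, over $\xi\in\mathbb{S}^{p-1}$, lying within $\delta$ of a radius at which the ray $\{\rho\xi\}$ meets $C_{f(x)}$. For $\|x\|$ beyond the threshold of Lemma~\ref{lemmadecay2}, $f$ is strictly decreasing along every ray from the origin, and every point of $B(x,K_\epsilon)$ has norm at least $\|x\|-K_\epsilon$, which exceeds that threshold; hence each ray meets $C_{f(x)}$ at most once inside $B(x,K_\epsilon)$, so in a fixed direction $\xi$ the set of admissible radii has one-dimensional Lebesgue measure at most $2\delta$.

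Next I would localize the directions. A ray $\{t\xi:t>0\}$ meets $B(x,K_\epsilon)$ only if the distance $\|x\|\sin\theta$ from $x$ to the ray is at most $K_\epsilon$, where $\theta$ is the angle between $\xi$ and $x/\|x\|$; thus the admissible directions form a spherical cap $\Xi$ of half-angle $\arcsin(K_\epsilon/\|x\|)$ whose surface measure $\sigma(\Xi)$ satisfies, after integrating $\sin^{p-2}\theta$ over the cap and bounding $\tan\theta$ there by $K_\epsilon/(\|x\|-K_\epsilon)$, a bound of the form $\sigma(\Xi)\le C_B(p-1)\bigl(K_\epsilon/(\|x\|-K_\epsilon)\bigr)^{p-1}$. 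On $S$ one has $t\le\|x\|+K_\epsilon$, so passing to polar coordinates,
\[
\mu^{Leb}(S)\le 2\delta\,(\|x\|+K_\epsilon)^{p-1}\,\sigma(\Xi)\le C\,\delta\,K_\epsilon^{\,p-1}\Bigl(\frac{\|x\|+K_\epsilon}{\|x\|-K_\epsilon}\Bigr)^{p-1}
\]
for an explicit dimensional constant $C$. Finally I would feed in the hypotheses: writing $\beta:=\bigl(\epsilon/(q(0)\,C_B(p)\,\delta)\bigr)^{1/(p-1)}$, the bound $\delta<\epsilon/\bigl(q(0)\,C_B(p)\,K_\epsilon^{p-1}\bigr)$ says exactly $\beta>K_\epsilon$, and the lower bound $\|x\|>2K_\epsilon^{2}(\beta-K_\epsilon)^{-1}+K_\epsilon$ rearranges to $K_\epsilon\,(\|x\|+K_\epsilon)/(\|x\|-K_\epsilon)<\beta$; raising both sides to the power $p-1$ gives $K_\epsilon^{\,p-1}\bigl((\|x\|+K_\epsilon)/(\|x\|-K_\epsilon)\bigr)^{p-1}<\epsilon/(q(0)\,C_B(p)\,\delta)$, and substituting into the displayed volume bound yields $q(0)\,\mu^{Leb}(S)<\epsilon$, which is the claim.

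The main obstacle is the volume estimate for $S$, together with the bookkeeping of the dimensional constant $C$ so that it is absorbed into $C_B(p)$ after the rearrangement above. The crucial structural input is that super-exponential lightness --- through Lemma~\ref{lemmadecay2} --- makes $C_{f(x)}$ radially single-valued over the region swept out by $B(x,K_\epsilon)$, which turns an a priori wild level set into an honest graph of radial thickness $2\delta$; the remaining delicacy is weighing the narrow solid angle of the viewing cone against the radial weight $t^{p-1}$, and it is precisely this balance that produces the factor $\bigl((\|x\|+K_\epsilon)/(\|x\|-K_\epsilon)\bigr)^{p-1}$, hence the explicit lower bound required on $\|x\|$.
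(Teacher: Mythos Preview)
Your approach is essentially the paper's: bound $q(\|x-y\|)\le q(0)$ by radial monotonicity, reduce to a Lebesgue-volume estimate for $C_{f(x)}(\delta)\cap B(x,K_\epsilon)$, and then verify algebraically that the hypothesis on $\|x\|$ forces the bound below $\epsilon/q(0)$. Your rearrangement of the $\|x\|$ condition to $K_\epsilon\,\dfrac{\|x\|+K_\epsilon}{\|x\|-K_\epsilon}<\beta$ is exactly what is used.

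The only difference is that the paper does not derive the volume bound but simply quotes it from \cite[Theorem~4.1]{jarn:hans:2000}:
\[
\mu^{Leb}\bigl(C_{f(x)}(\delta)\cap B(x,K_\epsilon)\bigr)\le \delta\,C_B(p)\,K_\epsilon^{p-1}\left(\frac{\|x\|+K_\epsilon}{\|x\|-K_\epsilon}\right)^{p-1}.
\]
Your polar-coordinate sketch of this bound is structurally right (thin radial slab over a narrow cap of directions), but you leave the dimensional constant imprecise and explicitly defer the ``bookkeeping'' to absorb your $2\,C_B(p-1)$-type prefactor into $C_B(p)$; the cited result pins this constant down. Also, your appeal to Lemma~\ref{lemmadecay2} to guarantee that each ray meets $C_{f(x)}$ only once inside $B(x,K_\epsilon)$ imports Assumption~\ref{assm:super'}, which is not among the stated hypotheses of the lemma; the paper avoids this by outsourcing the geometric estimate wholesale to Jarner--Hansen, whose Theorem~4.1 handles the collar volume directly.
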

\begin{proof}
   From previous works~{\citep[Theorem 4.1]{jarn:hans:2000}}, one observes
   \[\mu^{Leb}(C_{f(x)}(\delta)\cap B(x,K_{\epsilon}))\le \delta \,C_B(p) \,K_{\epsilon}^{p-1} \left(\frac{\|x\|+K_{\epsilon}}{\|x\|-K_{\epsilon}}\right)^{p-1}.\]
   Using this,
   \begin{align*}
       Q(x,C_{f(x)}(\delta)\cap B(x,K_{\epsilon})) = \int_{C_{f(x)}(\delta)\cap B(x,K_{\epsilon})} q(x,y) dy.
   \end{align*}
   Invoking assumption~\ref{assm:prop}, $q(0) \ge q(\|x\|)$ for any $x \in \mathbb{R}^p$. Hence
   \begin{align*}
       Q(x,C_{f(x)}(\delta)\cap B(x,K_{\epsilon})) &\le q(0) \int_{C_{f(x)}(\delta)\cap B(x,K_{\epsilon})}  dy \\
       & \le q(0) \,\mu^{Leb}(C_{f(x)}(\delta)\cap B(x,K_{\epsilon}))\\
       & \le q(0)\,\delta \,C_B(p) \,K_{\epsilon}^{p-1} \left(\frac{\|x\|+K_{\epsilon}}{\|x\|-K_{\epsilon}}\right)^{p-1}.
   \end{align*}
   Noting \[\|x\|>2K^2_{\epsilon}\left[\left(\frac{\epsilon}{q(0)\, C_B(p) \delta}\right)^{1/p-1} -K_{\epsilon}\right]^{-1}+K_{\epsilon}\]
   one immediately sees that 
   \[Q(x,C_{f(x)}(\delta)\cap B(x,K_{\epsilon})) <\epsilon.\]
   Also the condition on $\delta$ ensures the bound on $\|x\|$. Hence, we are done.
\end{proof}
\subsection{Main results for Section~\ref{sec:main_results}}
\begin{proposition}\label{prop2}
 Fix any $\epsilon>0$. Under assumptions~\ref{assm:super'}-\ref{assm:prop}, 
 \[\frac{PV(x)}{V(x)} <   Q(x,R(x)) + 8\epsilon\]
    if 
	\begin{align*}
	\|x\|&>\max\Bigg\{f^{-1}_s\left(C_1-\frac{1}{\delta}\log \epsilon\right),f^{-1}_s\left(C_1-\frac{1}{K_{\epsilon}}\log \epsilon\right),\\
	&\quad \quad \quad 2K^2_{\epsilon}\left[\left(\frac{\epsilon}{q(0)\, C_B(p) \delta}\right)^{1/p-1} -K_{\epsilon}\right]^{-1}+K_{\epsilon},\\
	& \quad \quad \quad f_s^{-1}\left(C_1-\log \left(\exp\left\{-\tilde f\left(f_s^{-1}\left(C_1\right)\vee M_s\right)\right\}/p^*\wedge 1\right)\right)+1,\ M_s+K_{\epsilon}\vee 1\Bigg\}.
	\end{align*}
\end{proposition}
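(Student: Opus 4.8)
The plan is to argue directly from the exact expression for $PV(x)/V(x)$ recorded in the proof of Proposition~\ref{prop:driftJH}: since $V=f^{-1/2}$,
\[
\frac{PV(x)}{V(x)} = Q(x,R(x)) + \int_{A(x)}\left(\frac{f(x)}{f(y)}\right)^{1/2} q(x,y)\,dy + \int_{R(x)}\left[\left(\frac{f(y)}{f(x)}\right)^{1/2}-\frac{f(y)}{f(x)}\right]q(x,y)\,dy .
\]
Both of the last two integrals are nonnegative --- the first integrand is at most $1$ on $A(x)$, and $u^{1/2}-u\in[0,1/4]$ for $u\in[0,1]$ --- so it suffices to show that their sum, call it $I_A+I_R$, is strictly below $8\epsilon$ once $\|x\|$ exceeds the stated threshold.

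First I would split $\mathbb{R}^p$ into $E_1=B(x,K_\epsilon)^c$, $E_2=B(x,K_\epsilon)\cap C_{f(x)}(\delta)$, and $E_3=B(x,K_\epsilon)\setminus C_{f(x)}(\delta)$, and bound the contribution of each (intersected with $A(x)$ or $R(x)$) to $I_A$ and $I_R$. On $E_1$, radial invariance of $q$ and the defining property of $K_\epsilon$ give $Q(x,E_1)=Q(0,B(0,K_\epsilon)^c)<\epsilon$, and since the two integrands are bounded by $1$ and $1/4$, $E_1$ contributes at most $\epsilon$ to $I_A$ and $\epsilon/4$ to $I_R$. On $E_2$, the hypotheses on $\delta$ and on $\|x\|$ assumed here are exactly those of Lemma~\ref{lemmaproposal11}, which yields $Q(x,E_2)\le Q\big(x,C_{f(x)}(\delta)\cap B(x,K_\epsilon)\big)<\epsilon$, so $E_2$ likewise contributes at most $\epsilon$ to $I_A$ and $\epsilon/4$ to $I_R$.

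The substantive step is the bulk region $E_3$, where I would use the radial decay of $f$. For $\|x\|$ past the thresholds $f_s^{-1}\big(C_1-\log(\exp\{-\tilde f(f_s^{-1}(C_1)\vee M_s)\}/p^*\wedge 1)\big)+1$ and $M_s$, Lemma~\ref{lemmapartition} makes $C_{f(x)}$ the common boundary of $A(x)$ and $R(x)$ and Lemma~\ref{lemmadecay2} makes $f$ radially decreasing near $x$; since $x\in C_{f(x)}$, for $y\in E_3$ the ray through $y$ meets $C_{f(x)}$ at a point $z$ whose norm differs from $\|y\|$ by more than $\delta$ but only by $O(K_\epsilon)$ (because $\|y-x\|<K_\epsilon$). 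Feeding this into Lemma~\ref{lemmadecay1} along that ray --- at base point $y$ with displacement $\|z\|-\|y\|$ when $y\in A(x)$, and at base point $z$ with displacement $\delta$ when $y\in R(x)$, which is precisely where the thresholds $f_s^{-1}(C_1-\tfrac1\delta\log\epsilon)$ and $f_s^{-1}(C_1-\tfrac1{K_\epsilon}\log\epsilon)$ are consumed --- forces $f(y)>\epsilon^{-1}f(x)$ on $A(x)\cap E_3$ and $f(y)<\epsilon f(x)$ on $R(x)\cap E_3$. Hence on $A(x)\cap E_3$ the integrand $(f(x)/f(y))^{1/2}$ is small and on $R(x)\cap E_3$ the integrand $(f(y)/f(x))^{1/2}-f(y)/f(x)$ is small; integrating against $q$ (total mass $\le 1$), adding the three regions, and using $u^{1/2}-u\le 1/4$ for the crude pieces, everything collects to $I_A+I_R<8\epsilon$.

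The hard part is the geometric bookkeeping in $E_3$: one must verify that ``$y$ lies outside the radial shell $C_{f(x)}(\delta)$'' is exactly the condition under which Lemma~\ref{lemmadecay1} applies with displacement at least $\delta$ --- using that $C_{f(x)}$ separates $A(x)$ from $R(x)$ (Lemma~\ref{lemmapartition}), that $x$ itself lies on $C_{f(x)}$, and the radial-versus-Euclidean comparison of \cite{jarn:hans:2000} to control $\|z\|-\|y\|$ by $O(K_\epsilon)$ --- and then check that each of the five terms in the lower bound on $\|x\|$ is precisely a hypothesis demanded by one of Lemmas~\ref{lemmadecay1}--\ref{lemmapartition} or by Lemma~\ref{lemmaproposal11}. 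Everything else (bounding $q$ by its value at the origin via Assumption~\ref{assm:prop}, and the elementary inequalities) is routine.
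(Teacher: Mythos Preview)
Your proposal is correct and follows essentially the same route as the paper's own proof. Both start from the exact expression for $PV(x)/V(x)$, isolate $Q(x,R(x))$, and then cut the residual integrals over $A(x)$ and $R(x)$ into the same three geometric pieces: the complement of $B(x,K_\epsilon)$, the thin radial shell $C_{f(x)}(\delta)\cap B(x,K_\epsilon)$, and the bulk $B(x,K_\epsilon)\setminus C_{f(x)}(\delta)$; both invoke Lemma~\ref{lemmaproposal11} for the shell, the defining property of $K_\epsilon$ for the far piece, and Lemma~\ref{lemmadecay1} (together with Lemma~\ref{lemmapartition}) for the bulk. The only cosmetic differences are that the paper splits the far region once more (into $C_{f(x)}(\delta)$ and its complement, bounding each by $\epsilon$ separately to reach $4\epsilon$ on each of $A(x)$ and $R(x)$), and that the paper drops the $-f(y)/f(x)$ term in the $R(x)$ integrand at the outset, whereas you retain it and use $u^{1/2}-u\le 1/4$; neither change affects the argument or the final $8\epsilon$.
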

\begin{proof}
    This is a fundamental results as it allows one to obtain a rate for the drift. The idea of the proof is to divide the space in to multiple sections with respect to the boundary of $A(x)$ and consider them separately. Consider any $x$ such that $C_{f(x)}$ partitions the space into $A(x)$ and $R(x)$. This is fundamental to the proof as it helps one break the cases in $A(x)$ and $R(x)$ by two categories i.e. points in $A(x)$ is of the form $z-s\frac{z}{\|z\|}$ for some $z$ such that $f(z)=f(x)$ and $s>0$. Similarly any point in $R(x)$ is of the form $z+s\frac{z}{\|z\|}$ such that $f(z)=f(x)$ and $s>0$. So,
    \[\|x\|>f_s^{-1}\left(C_1-\log \left(\exp\left\{-\tilde f\left(f_s^{-1}\left(C_1\right)\vee M_s\right)\right\}/p^*\wedge 1\right)\right)\vee M_s+1.\]
    It is easy to see that 
    $$\frac{PV(x)}{V(x)}=\int_{A(x)} q(x,y) \left(\frac{f(x)}{f(y)}\right)^{1/2} dy +\int_{R(x)} q(x,y) \left(1-\frac{f(y)}{f(x)}+\left(\frac{f(y)}{f(x)}\right)^{1/2}\right) dy.$$
    Therefore
    \begin{align*}
       \frac{PV(x)}{V(x)} \le  \int_{A(x)} q(x,y) \left(\frac{f(x)}{f(y)}\right)^{1/2} dy + Q(x,R(x)) + \int_{R(x)} q(x,y) \left(\frac{f(y)}{f(x)}\right)^{1/2} dy.
    \end{align*}
    Now, 
    \begin{align*}
        \int_{R(x)} q(x,y) \left(\frac{f(y)}{f(x)}\right)^{1/2} dy &=\int_{R(x)\cap C_{f(x)}(\delta) \cap B(x,K_{\epsilon})}q(x,y) \left(\frac{f(y)}{f(x)}\right)^{1/2} dy \\
       & \quad +\int_{R(x)\cap C_{f(x)}(\delta)\cap B(x,K_{\epsilon})^c} q(x,y) \left(\frac{f(y)}{f(x)}\right)^{1/2} dy\\
       & \quad +\int_{R(x)\cap C_{f(x)}(\delta)^c \cap B(x,K_{\epsilon})}q(x,y) \left(\frac{f(y)}{f(x)}\right)^{1/2} dy\\ &\quad +\int_{R(x)\cap C_{f(x)}(\delta)^c \cap B(x,K_{\epsilon})^c}q(x,y) \left(\frac{f(y)}{f(x)}\right)^{1/2} dy
    \end{align*}
    Note that on $R(x)$,$f(y)/f(x)<1$. Hence
    \begin{align*}
        \int_{R(x)\cap C_{f(x)}(\delta)\cap B(x,K_{\epsilon})^c} q(x,y) \left(\frac{f(y)}{f(x)}\right)^{1/2} dy &\le Q(x, B(x,K_{\epsilon})^c)\\
        &=Q(0,B(0,K_{\epsilon})\\
        &<\epsilon
    \end{align*}
    We can argue similarly for \[\int_{R(x)\cap C_{f(x)}(\delta)^c \cap B(x,K_{\epsilon})^c}q(x,y) \left(\frac{f(y)}{f(x)}\right)^{1/2} dy.\] 
    Now, for the first term,
    \begin{align*}
        \int_{R(x)\cap C_{f(x)}(\delta) \cap B(x,K_{\epsilon})}q(x,y) \left(\frac{f(y)}{f(x)}\right)^{1/2} dy  &\le Q(x,R(x)\cap C_{f(x)}(\delta) \cap B(x,K_{\epsilon}))\\
        &\le Q(x,C_{f(x)}(\delta) \cap B(x,K_{\epsilon})) \\
        &<\epsilon
    \end{align*}
    if $$\|x\|>2K^2_{\epsilon}\left[\left(\frac{\epsilon}{q(0)\, C_B(p) \delta}\right)^{1/p-1} -K_{\epsilon}\right]^{-1}+K_{\epsilon}$$
    where the last line follows from Lemma~\ref{lemmaproposal11}.
    For the remaining term note that if $y \in R(x)\cap C_{f(x)}(\delta)^c \cap B(x,K_{\epsilon})$, then there exists a $z$ such that $f(z)=f(x)$ and $y=z+s\frac{z}{\|z\|}$ such that $\delta <s< K_{\epsilon}$. 
    $$\frac{f(y)}{f(x)}=\frac{f(z+s\frac{z}{\|z\|})}{f(z)}<\epsilon$$
    if $$\|x\|>f^{-1}_s\left(C_1-\frac{1}{s}\log \epsilon\right)\vee M_s$$
    by Lemma~\ref{lemmadecay1}.
    
    Hence choosing
	\begin{align*}
	\|x\|&>\max\Bigg\{f^{-1}_s\left(C_1-\frac{1}{\delta}\log \epsilon\right),  f_s^{-1}\left(C_1-\log \left(\exp\left\{-\tilde f\left(f_s^{-1}\left(C_1\right)\vee M_s\right)\right\}/p^*\wedge 1\right)\right)+1,M_s+1\Bigg\}
	\end{align*}
 	implies 
	$$\int_{R(x)\cap C_{f(x)}(\delta)^c \cap B(x,K_{\epsilon})}q(x,y) \left(\frac{f(y)}{f(x)}\right)^{1/2} dy <\epsilon$$ which in turn gives 
	$$\int_{R(x)} q(x,y) \left(\frac{f(y)}{f(x)}\right)^{1/2} dy<4\epsilon$$
	if 
	\begin{align*}
	\|x\|&>\max\Bigg\{f^{-1}_s\left(C_1-\frac{1}{\delta}\log \epsilon\right),\ 2K^2_{\epsilon}\left[\left(\frac{\epsilon}{q(0)\, C_B(p) \delta}\right)^{1/p-1} -K_{\epsilon}\right]^{-1}+K_{\epsilon},\\
	& \quad \quad \quad f_s^{-1}\left(C_1-\log \left(\exp\left\{-f\left(f_s^{-1}\left(C_1\right)\vee M_s\right)\right\}/p^*\wedge 1\right)\right)+1,M_s+1\Bigg\}.
	\end{align*}
	One can similarly argue and conclude that 
	$$\int_{A(x)} q(x,y) \left(\frac{f(x)}{f(y)}\right)^{1/2} dy<4\epsilon$$
	under the same condition on $x$. 
	This is indeed true as the only case we have to consider is $A(x)\cap C_{f(x)}(\delta)^c \cap B(x,K_{\epsilon})$. For this set \(y=z-s\,z/\|z\|\) for some $z$ with $f(z)=f(x)$. Note that in this case as well $\delta <s<K_\epsilon$. Also note that for this particular case \[\frac{f(x)}{f(y)}=\frac{f(z)}{f(z-s\,z/\|z\|)}.\] Denote by $n(z)=z/\|z\|$. This implies 
	\begin{align*}
	\frac{f(x)}{f(y)}&=\exp\left\{\log(f(z))-\log(f(z-s\,n(z)))\right\}\\
	&=\exp\left\{\log(f(z))-\log(f(z))+s\int_{0}^{1}\left\langle \frac{z}{\|z\|},\nabla \log f(u\,z+(1-u)(z-s\,n(z)))\right\rangle du\right\}\\
	& \le \exp \left\{s\int_{0}^{1}\left(C_1-f_s(\|z-(1-u)s\,n(z)\|)\right) du\right\}\\
	&\le \exp \left\{s\,C_1-s\,\int_{0}^{1} f_s(\|z\|-(1-u)s) du\right\}\\
	&\le \exp \left\{s\left(C_1-f_s(\|z\|-s)\right)\right\}.
	\end{align*}
	Arguing as in Lemma~\ref{lemmadecay1}, this expression is less than $\epsilon$ when 
	\[\left\|z\right\|>f_s^{-1}\left(C_1-\frac{1}{\delta}\log\epsilon\right)\vee M_s+K_{\epsilon}.\]
	Hence 
	\begin{align*}
	\frac{PV(x)}{V(x)} <   Q(x,R(x)) + 8\epsilon
	\end{align*}
	if 
	\begin{align*}
	\|x\|&>\max\Bigg\{f_s^{-1}\left(C_1-\frac{1}{\delta}\log\epsilon\right)+K_{\epsilon},\ 2K^2_{\epsilon}\left[\left(\frac{\epsilon}{q(0)\, C_B(p) \delta}\right)^{1/p-1} -K_{\epsilon}\right]^{-1}+K_{\epsilon},\\
	& \quad \quad \quad f_s^{-1}\left(C_1-\log \left(\exp\left\{-f\left(f_s^{-1}\left(C_1\right)\vee M_s\right)\right\}/p^*\wedge 1\right) +\right)+1, \ M_s+K_{\epsilon}\vee 1\Bigg\}.
	\end{align*}
\end{proof}

\begin{proposition} 
\label{prop:R_eps}
 Suppose Assumptions~\ref{assm:cone}-\ref{assm:prop} hold, let $p^*=\max_{x}f(x)$,  and for any $\epsilon_{\alpha}$,
\begin{align}
    \epsilon\le \frac{1}{8}\frac{\epsilon_{\alpha}}{1+\epsilon_{\alpha}}\,Q\left(0,\left[-\frac{R_{\alpha}}{\sqrt{p}}-1,\frac{R_{\alpha}}{\sqrt{p}}-1\right]\times \left[-\frac{R_{\alpha}}{\sqrt{p}},\frac{R_{\alpha}}{\sqrt{p}}\right]^{p-1}\right).
\end{align}
Let
\[
\delta <\frac{\epsilon}{q(0)\,C_B(p)\, K^{p-1}_{\epsilon}},
\]
where $C_B(p)$ is is the Lebesgue volume of the unit ball and $K_{\epsilon}$ is such that $Q(0,B(0,K_{\epsilon})^c)\le \epsilon$ with $M^*=\max\{M_s,M_p\}$.
Then
\begin{equation}\label{convergenceradius}
\begin{split}
\mathcal{R}_{\epsilon}&=\max\Bigg\{f_s^{-1}\left(C_1-\frac{1}{\delta}\log\epsilon\right)+K_{\epsilon},\ 2K^2_{\epsilon}\left[\left(\frac{\epsilon}{q(0)\, C_B(p) \delta}\right)^{1/p-1} -K_{\epsilon}\right]^{-1}+K_{\epsilon},\\
& \quad \quad \quad f_s^{-1}\left(C_1-\log \left(\exp\left\{-\tilde f\left(f_s^{-1}\left(C_1\right)\vee M^*\right)\right\}/p^*\wedge 1\right)\right)+1, \ M^*+K_{\epsilon}\vee 1\Bigg\}.
\end{split}
\end{equation}
Also   $$\left\{x:V(x)<d\right\} \subset \overline{B(0,\mathcal{R}_{\max})}$$ for $d=(2b+\epsilon_{\alpha})(1+\tilde\lambda)^{-1}$
  where \[\mathcal{R}_{\max}=f_s^{-1}\left(C_1-\log\left(\frac{(1-\tilde\lambda)^2}{2\,p^* \,(2b+\epsilon_{\alpha})^2}\wedge 1\right)\right)\vee M^*.\]

\end{proposition}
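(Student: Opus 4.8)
The statement packages two facts: that the explicit radius $\mathcal{R}_{\epsilon}$ in \eqref{convergenceradius} is a legitimate choice for the (previously only implicitly specified) drift radius of Proposition~\ref{prop:driftJH}, so that $PV(x)\le\tilde{\lambda}V(x)$ whenever $\|x\|>\mathcal{R}_{\epsilon}$; and that the super-level set entering the minorization of Proposition~\ref{prop:minorJH} lies inside the closed ball $\overline{B(0,\mathcal{R}_{\max})}$. The plan is to get the first by pushing the explicit thresholds of Lemma~\ref{lemmaproposal11} and Proposition~\ref{prop2} through the particular choices of $\epsilon$ and $\delta$ fixed in the hypotheses and then invoking Lemma~\ref{volumelemma}, and the second by a direct application of Lemma~\ref{lemmadecay3}.

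For the drift radius, fix $x$ with $\|x\|>\mathcal{R}_{\epsilon}$. Since $f_s$ is strictly increasing, $\tilde f$ is non-decreasing, and $M^{*}=\max\{M_s,M_p\}\ge M_s$, each of the four quantities inside the maximum in the conclusion of Proposition~\ref{prop2} is dominated by one of the four terms defining $\mathcal{R}_{\epsilon}$ in \eqref{convergenceradius}: the first two match verbatim, and the third and fourth only increase when $M_s$ is replaced by $M^{*}$. The hypothesis $\delta<\epsilon\,[q(0)\,C_B(p)\,K_{\epsilon}^{p-1}]^{-1}$ is precisely what keeps $(\epsilon/(q(0)\,C_B(p)\,\delta))^{1/(p-1)}-K_{\epsilon}$ strictly positive, so the second term of $\mathcal{R}_{\epsilon}$ — equivalently, the threshold of Lemma~\ref{lemmaproposal11} feeding into Proposition~\ref{prop2} — is well defined. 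Hence Proposition~\ref{prop2} gives $PV(x)/V(x)<Q(x,R(x))+8\epsilon$. Since $\mathcal{R}_{\epsilon}\ge M^{*}\ge M_p$ and the standing restrictions $K\le 1/3$, $\epsilon_{\alpha}<\tilde{\epsilon}_{\alpha}\wedge 1/3$ are in force, Lemma~\ref{volumelemma} applies and yields $Q(x,R(x))=1-Q(x,A(x))\le 1-Q(0,I_{\alpha})$, where $I_{\alpha}=[-R_{\alpha}/\sqrt{p}-1,R_{\alpha}/\sqrt{p}-1]\times[-R_{\alpha}/\sqrt{p},R_{\alpha}/\sqrt{p}]^{p-1}$. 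Plugging in the hypothesis $8\epsilon\le\frac{\epsilon_{\alpha}}{1+\epsilon_{\alpha}}Q(0,I_{\alpha})$,
\[
\frac{PV(x)}{V(x)}<1-Q(0,I_{\alpha})+\frac{\epsilon_{\alpha}}{1+\epsilon_{\alpha}}Q(0,I_{\alpha})=1-\frac{1}{1+\epsilon_{\alpha}}Q(0,I_{\alpha})=\tilde{\lambda},
\]
which is the drift inequality outside $\mathcal{R}_{\epsilon}$; the compact part is handled exactly as in Proposition~\ref{prop:driftJH}, giving $b=3\exp\{\tilde f(\mathcal{R}_{\epsilon})/2\}$ via Assumption~\ref{assm:envelope}.

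For the super-level set, note $V(x)=f(x)^{-1/2}$, so $\{x:V(x)<d\}=\{x:f(x)>d^{-2}\}$, and with $d=(2b+\epsilon_{\alpha})/(1-\tilde{\lambda})$ (the set of Proposition~\ref{prop:minorJH}) this equals $\{x:f(x)>(1-\tilde{\lambda})^2/(2b+\epsilon_{\alpha})^2\}$. Apply Lemma~\ref{lemmadecay3} with $\epsilon=\tfrac12(1-\tilde{\lambda})^2/(2b+\epsilon_{\alpha})^2$: every $x$ with $\|x\|>f_s^{-1}(C_1-\log(\tfrac{(1-\tilde{\lambda})^2}{2\,p^{*}(2b+\epsilon_{\alpha})^2}\wedge 1))\vee M^{*}$ then satisfies $f(x)<\tfrac12(1-\tilde{\lambda})^2/(2b+\epsilon_{\alpha})^2<d^{-2}$, where the factor $\tfrac12$ supplies the slack absorbing both the additive $1$ and the replacement of $M_s$ by $M^{*}$ in Lemma~\ref{lemmadecay3}. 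Contrapositively, $f(x)>d^{-2}$ forces $\|x\|\le\mathcal{R}_{\max}$, so $\{x:V(x)<d\}\subset\overline{B(0,\mathcal{R}_{\max})}$, the ball being taken closed to cover the boundary level set.

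The analytic content is almost immediate once the constants are aligned; the main obstacle is the bookkeeping — checking that the four-way maximum in \eqref{convergenceradius} genuinely dominates every threshold produced along the chain Lemma~\ref{lemmadecay1}$\,\to\,$Lemma~\ref{lemmadecay3}$\,\to\,$Lemma~\ref{lemmapartition}$\,\to\,$Lemma~\ref{lemmaproposal11}$\,\to\,$Proposition~\ref{prop2}, and that the three interlocking choices ($\epsilon$ via $\epsilon_{\alpha}$ and $Q(0,I_{\alpha})$; $\delta$ via $\epsilon$ and $K_{\epsilon}$; $K_{\epsilon}$ via $\epsilon$) are mutually consistent with all denominators strictly positive.
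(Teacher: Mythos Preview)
Your argument is correct and follows the paper's own proof essentially step for step: invoke Proposition~\ref{prop2} to get $PV(x)/V(x)<Q(x,R(x))+8\epsilon$, bound $Q(x,R(x))$ via Lemma~\ref{volumelemma}, and choose $\epsilon$ so that the sum is $\tilde\lambda$; then for $\mathcal{R}_{\max}$ apply Lemma~\ref{lemmadecay3} with $\epsilon=\tfrac12(1-\tilde\lambda)^2/(2b+\epsilon_{\alpha})^2$ and take contrapositives. One small remark: your claim that the factor $\tfrac12$ ``absorbs the additive $1$'' from Lemma~\ref{lemmadecay3} is not literally how it works---the $+1$ is simply dropped in the definition of $\mathcal{R}_{\max}$ in both the paper and your argument, and the $\tfrac12$ only serves to turn the open super-level set into a closed one---but this is the same (harmless) looseness present in the paper's proof.
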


\begin{proof}[Proof of Proposition~\ref{prop:R_eps}]
This is the main proposition for the calculation of the upper bound for the rate of convergence. This result quantifies $\mathcal{R}_{\epsilon}$ and $\mathcal{R}_{\max}$ which were introduced in Propositions~\ref{prop:driftJH} and~\ref{prop:minorJH}.
Note that by Proposition~\ref{prop2},
	\begin{align*}
	\|x\|&>\max\Bigg\{f_s^{-1}\left(C_1-\frac{1}{\delta}\log\epsilon\right)+K_{\epsilon},\ 2K^2_{\epsilon}\left[\left(\frac{\epsilon}{q(0)\, C_B(p) \delta}\right)^{1/p-1} -K_{\epsilon}\right]^{-1}+K_{\epsilon},\\
	& \quad \quad \quad f_s^{-1}\left(C_1-\log \left(\exp\left\{-f\left(f_s^{-1}\left(C_1\right)\vee M_s\right)\right\}/p^*\wedge 1\right) +\right)+1, \ M_s+1\Bigg\}.
	\end{align*}
implies
     \begin{align*}
       \frac{PV(x)}{V(x)} <   Q(x,R(x)) + 8\epsilon
    \end{align*}
We know $Q(x,A(x))\ge Q(x,W(x))$ from assumption~\ref{assm:cone}.
 By Lemma~\ref{volumelemma} we know that
 	\begin{align*}
	Q(x,W(x))\ge Q\left(0,\left[-\frac{R_{\alpha}}{\sqrt{p}}-1,\frac{R_{\alpha}}{\sqrt{p}}-1\right]\times \left[-\frac{R_{\alpha}}{\sqrt{p}},\frac{R_{\alpha}}{\sqrt{p}}\right]^{p-1}\right)>0,
	\end{align*}
	Thus using the proof of Proposition~\ref{prop:driftJH},
	\[\frac{PV(x)}{V(x)}\le \tilde \lambda\]
	if 
	\begin{align*}
	\|x\|&>\max\Bigg\{f_s^{-1}\left(C_1-\frac{1}{\delta}\log\epsilon\right)+K_{\epsilon},\ 2K^2_{\epsilon}\left[\left(\frac{\epsilon}{q(0)\, C_B(p) \delta}\right)^{1/p-1} -K_{\epsilon}\right]^{-1}+K_{\epsilon},\\
	& \quad \quad \quad f_s^{-1}\left(C_1-\log \left(\exp\left\{-f\left(f_s^{-1}\left(C_1\right)\vee M_s\right)\right\}/p^*\wedge 1\right) +\right)+1, \ M_s+1,\, M_p\Bigg\}.
	\end{align*}
Therefore 
\begin{align*}
    \mathcal{R}_{\epsilon}&=\max\Bigg\{f_s^{-1}\left(C_1-\frac{1}{\delta}\log\epsilon\right)+K_{\epsilon},\ 2K^2_{\epsilon}\left[\left(\frac{\epsilon}{q(0)\, C_B(p) \delta}\right)^{1/p-1} -K_{\epsilon}\right]^{-1}+K_{\epsilon},\\
	& \quad \quad \quad f_s^{-1}\left(C_1-\log \left(\exp\left\{-f\left(f_s^{-1}\left(C_1\right)\vee M^*\right)\right\}/p^*\wedge 1\right) +\right)+1, \ M^*+K_{\epsilon}\vee 1\Bigg\}
\end{align*}
suffices.
Hence we are done with the first part.
     As in Proposition~\ref{prop:minorJH} take $d=\frac{2b+\epsilon_{\alpha}}{(1-\tilde\lambda)}$.  From the proof of Proposition~\ref{prop:minorJH}, one observes
     \begin{align*}
         \left\{x: V(x)<\frac{2b+\epsilon_{\alpha}}{(1-\tilde\lambda)}\right\}=\left\{x:\frac{(1-\tilde\lambda)^2}{(2b+\epsilon_{\alpha})^2}<f(x)\right\} \subset \left\{x:\frac{(1-\tilde\lambda)^2}{2(2b+\epsilon_{\alpha})^2}\le f(x)\right\}.
     \end{align*}
 	Note that by Lemma~\ref{lemmadecay3}  $\|x\|>f_s^{-1}\left(C_1-\log\left(\frac{(1-\tilde\lambda)^2}{2\,p^* \,(2b+\epsilon_{\alpha})^2}\wedge 1\right)\right)\vee M^*$ implies $f(x)<\frac{(1-\tilde\lambda)^2}{2(2b+\epsilon_{\alpha})^2}$ as $M^*\ge M_s$.
	Therefore
	\begin{align*}
	\left\{x:\|x\|>f_s^{-1}\left(C_1-\log\left(\frac{(1-\tilde\lambda)^2}{2\,p^* \,(2b+\epsilon_{\alpha})^2}\wedge 1\right)\right)\vee M^*\right\} \subset \left\{x: f(x)<\frac{(1-\tilde\lambda)^2}{2(2b+\epsilon_{\alpha})^2} \right\}.
	\end{align*}
	This in turn implies 
	\begin{align*}
	\left\{x: f(x)\ge \frac{(1-\tilde\lambda)^2}{2(2b+\epsilon_{\alpha})^2} \right\} \subset \left\{x:\|x\|\le f_s^{-1}\left(C_1-\log\left(\frac{(1-\tilde\lambda)^2}{2\,p^* \,(2b+\epsilon_{\alpha})^2}\wedge 1\right)\right)\vee M^*\right\}=\overline{B(0,\mathcal{R}_{\max})}
	\end{align*}
	with \[\mathcal{R}_{\max}=f_s^{-1}\left(C_1-\log\left(\frac{(1-\tilde\lambda)^2}{2\,p^* \,(2b+\epsilon_{\alpha})^2}\wedge 1\right)\right)\vee M^*.\]
 \end{proof}

\begin{remark}
By convention, if $x\notin Range(f_s)$, i.e., $x< \inf Range(f_s)$, then $f_s^{-1}(x)=\inf Domain(f_s)$.
\end{remark}

\subsection{Proposition~\ref{prop:more_lowerbounds}}
\label{sec:more_lb}
Suppose there exists $B(x) < \infty$ such that $q(x,y) \le B(x)$.  Then
\begin{equation*}
 \int_{\mathbb{R}^p} \alpha(x,y) q(x,y) dy   = \int_{\mathbb{R}^p} \left[ \frac{q(x,y)}{f(x)} \wedge \frac{q(x,y)}{f(y)}\right] f(y) dy
  \le \frac{B(x)}{f(x)} 
\end{equation*}
and the claim follows from Theorem~\ref{thm:main_result}.

If $f$ has at most countably many modes, then for mode $x^*$,
\begin{align*}
\int_{\mathbb{R}^p} \alpha(x^*,y) q(x^*,y) dy & =\int_{f(y)<f(x^*)} \frac{f(y)}{f(x^*)} q(x^*,y) dy +\int_{f(y)=f(x^*)} q(x^*,y) dy \\
    &= \int \frac{f(y)}{f(x^*)} q(x^*,y) dy
\end{align*}
and the claim follows from Theorem~\ref{thm:main_result}.

\subsection{Proof of Proposition~\ref{prop:class}}
\label{app:prop_class}

We analyze the two cases separately and establish assumptions~\ref{assm:cone}, ~\ref{assm:super'} and~\ref{assm:envelope} for both cases. Note that the aforementioned assumptions are sufficient for the upper bound of the rate of the convergence of RWMH algorithm subject to the proposal satisfying assumption~\ref{assm:prop}.
    Consider the first case with $f=f_1\cdot f_2$. This implies $\log f=\log f_1+\log f_2$. Now, 
    \begin{align*}
        \left|\log f\right|&\le \left|\log f_1\right|+ \left|\log f_2\right|\\
        &\le \tilde{f}_1(\|x\|)+\tilde{f}_2(\|x\|)
    \end{align*}
    where $\tilde{f}_1,\tilde{f}_2$ are the functions as in assumption~\ref{assm:envelope} for $f_1, f_2$ respectively.
    Note that since $f_1$ and $f_2$ satisfy  assumption~\ref{assm:super'}, 
    \[\left\langle \frac{x}{\left\|x\right\|}, \nabla \log f_1\right\rangle \le C^*_1 -f_{1s}(\|x\|)\]
    and 
    \[\left\langle \frac{x}{\left\|x\right\|}, \nabla \log f_2\right\rangle \le C^*_2 -f_{2s}(\|x\|)\]
    for constants $C^*_1$ and $C^*_2$ and $f_{1s}, f_{2s}$ continuous increasing functions.
    This implies 
    \begin{align*}
        \left\langle \frac{x}{\left\|x\right\|}, \nabla \log f\right\rangle &\le \left\langle \frac{x}{\left\|x\right\|}, \nabla \log f_1\right\rangle +\left\langle \frac{x}{\left\|x\right\|}, \nabla \log f_2\right\rangle \\
        &\le C^*_1 -f_{1s}(\|x\|)+C^*_2 -f_{2s}(\|x\|)
    \end{align*}
    Hence $f$ satisfies assumption~\ref{assm:super'} with $C^*_3=C^*_1+C^*_2$ and $f_s=f_{1s}+f_{2s}$. Lastly note that since $f_1$ and $f_2$ satisfy \eqref{eq:curvature:exact}, 
    \begin{align*}
        \left\langle\frac{x}{\|x\|},\frac{\nabla f(x)}{\left\|\nabla f(x)\right\|}\right\rangle&=       \left\langle\frac{x}{\|x\|},\frac{\nabla \log f(x)}{\left\|\nabla \log f(x)\right\|}\right\rangle\\
        &=\left\langle\frac{x}{\|x\|},\frac{\nabla \log f_1(x)+\log f_2(x)}{\left\|\nabla \log f_1(x)+\log f_2(x)\right\|}\right\rangle\\
        &=\left\langle \frac{x}{\|x\|}, \frac{\nabla \log f_1(x)}{\left\|\nabla \log f_1(x)\right\|}\right\rangle \frac{\left\|\nabla \log f_1(x)\right\|}{\left\|\nabla \log f_1(x)+\log f_2(x)\right\|}\\
        &\quad +\left\langle \frac{x}{\|x\|}, \frac{\nabla \log f_2(x)}{\left\|\nabla \log f_2(x)\right\|}\right\rangle \frac{\left\|\nabla \log f_2(x)\right\|}{\left\|\nabla \log f_1(x)+\log f_2(x)\right\|}.
    \end{align*}
    For $x$ with $|x|>M_{1p}\vee M_{2p}$,  
    \begin{align*}
         \left\langle\frac{x}{\|x\|},\frac{\nabla f(x)}{\left\|\nabla f(x)\right\|}\right\rangle &\le -\eta_1 \, \frac{\left\|\nabla \log f_2(x)\right\|}{\left\|\nabla \log f_1(x)+\nabla \log f_1(x)\right\|}   -\eta_2\, \frac{\left\|\nabla \log f_2(x)\right\|}{\left\|\nabla \log f_1(x)+\nabla \log f_2(x)\right\|} \\
         &\le -\eta_1\wedge\eta_2
    \end{align*}
For the second case, note that 
\[\left|\log \left(a_1\,f_1+a_2\,f_2\right)\right|\le \log \left(\exp\left(\tilde{f}_1+\tilde{f}_2\right)+1\right).\]
Hence assumption~\ref{assm:envelope} is satisfied.
Also, 
\begin{align*}
    \left\langle \frac{x}{\left\|x\right\|},\nabla \log f(x)\right\rangle &=    \left\langle \frac{x}{\left\|x\right\|},\nabla \log f(x)\right\rangle\\
    &=\left\langle \frac{x}{\left\|x\right\|},\nabla \log \left(a_1\,f_1(x)+a_2\,f_2(x)\right)\right\rangle\\
    &=\left\langle \frac{x}{\left\|x\right\|},\frac{a_1\,\nabla f_1(x)+a_2\,\nabla f_2(x)}{a_1\,f_1(x)+a_2\,f_2(x)}\right\rangle\\
    &=\frac{a_1\, f_1(x)}{a_1\,f_1(x)+a_2\,f_2(x)}\left\langle\frac{x}{\left\|x\right\|},\frac{\nabla f_1(x)}{f_1(x)}\right\rangle +\frac{a_2\, f_2(x)}{a_1\,f_1(x)+a_2\,f_2(x)}\left\langle\frac{x}{\left\|x\right\|},\frac{\nabla f_2(x)}{f_2(x)}\right\rangle\\
    &=\frac{a_1\, f_1(x)}{a_1\,f_1(x)+a_2\,f_2(x)}\left\langle\frac{x}{\left\|x\right\|},\nabla \log f_1(x)\right\rangle +\frac{a_2\, f_2(x)}{a_1\,f_1(x)+a_2\,f_2(x)}\left\langle\frac{x}{\left\|x\right\|},\nabla \log f_2(x)\right\rangle\\
    &\le \frac{a_1\, f_1(x)}{a_1\,f_1(x)+a_2\,f_2(x)} \left(C^*_1-f_{1s}(\|x\|)\right)+\frac{a_2\, f_2(x)}{a_1\,f_1(x)+a_2\,f_2(x)}\left(C^*_2-f_{2s}(\|x\|)\right)\\
    &\le C^*_1\vee C^*_2-f_{1s}\wedge f_{2s}(\|x\|)
\end{align*}
It is easy to see that $f_s=f_{1s}\wedge f_{2s}$ is an increasing continuous function.
Finally, one has for $x$ with $\|x\|>M_{1p}\vee M_{2p}$,
\begin{align*}
    \left\langle\frac{x}{\left\|x\right\|},\frac{\nabla f(x)}{\left\|\nabla f(x)\right\|}\right\rangle&=    \left\langle\frac{x}{\left\|x\right\|},\frac{ a_1\,\nabla f_1(x)+a_2\,\nabla f_2(x)}{\left\| a_1\,\nabla f_1(x)+a_2\,\nabla f_2(x)\right\|}\right\rangle\\
    &=\frac{\left\|a_1\, \nabla f_1(x)\right\|}{\left\| a_1\,\nabla f_1(x)+a_2\,\nabla f_2(x)\right\|}\left\langle\frac{x}{\left\|x\right\|},\frac{\nabla f_1(x)}{\left\|\nabla f_1(x)\right\|}\right\rangle \\
    &\quad +\frac{\left\|a_2\, \nabla f_2(x)\right\|}{\left\| a_1\,\nabla f_1(x)+a_2\,\nabla f_2(x)\right\|}\left\langle\frac{x}{\left\|x\right\|},\frac{\nabla f_2(x)}{\left\|\nabla f_2(x)\right\|}\right\rangle\\
    &\le -\eta_1\wedge \eta_2.
\end{align*}
Hence we are done.

\section{Proofs for GLM Type Families}
 \begin{lemma}\label{glmsuplem}
	Let Assumptions~\ref{assm:glm:diff}-\ref{assm:glm:cumulant} and Assumptions~\ref{assm:prior:cnvx} or Assumption~\ref{assm:prior:diss} hold. On the set $|\theta|\ge 1$, the family of distributions with density as defined in \eqref{glmposterior} is superexponentially light with rate function \[f_s(x)= c x\]  where $c=\lambda_1$  under Assumption~\ref{assm:prior:cnvx} and $c=a_{\dag}$ under Assumption~\ref{assm:prior:diss}. 
\end{lemma}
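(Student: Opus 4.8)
The plan is to verify Assumption~\ref{assm:super'} directly for the posterior in \eqref{glmposterior} with $f_s(u)=c\,u$ and $M_s=1$. Differentiating $\log f(\theta\mid y,x)$ and pairing with the unit vector $\theta/\|\theta\|$ gives
\[
\left\langle \frac{\theta}{\|\theta\|},\nabla \log f(\theta\mid y,x)\right\rangle = \frac{1}{\|\theta\|}\sum_{i=1}^{n}\left\langle \nabla\Pi(\theta,x_i)\,\theta,\,T(y_i)\right\rangle - \frac{1}{\|\theta\|}\sum_{i=1}^{n}\theta^{\mathsf T}\nabla c(\theta,x_i) - \frac{1}{\|\theta\|}\theta^{\mathsf T}\nabla g(\theta),
\]
where Assumption~\ref{assm:glm:diff} ensures all these gradients exist and are continuous (here $\nabla\Pi(\theta,x_i)$ denotes the $m\times p$ Jacobian in $\theta$, so that $\nabla_\theta\langle\Pi(\theta,x_i),T(y_i)\rangle=\nabla\Pi(\theta,x_i)^{\mathsf T}T(y_i)$). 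I would then bound the three groups of terms separately so that the first two contribute only a bounded constant while the last produces the $-c\|\theta\|$ decay.

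For the $\Pi$-term, Cauchy--Schwarz together with the spectral-norm bound of Assumption~\ref{assm:glm:Pi:bdd} gives $|\langle\nabla\Pi(\theta,x_i)\theta,T(y_i)\rangle|\le \|\nabla\Pi(\theta,x_i)\|_{sp}\,\|\theta\|\,\|T(y_i)\|\le \lambda(x_1,\dots,x_n)\,\|\theta\|\,\|T(y_i)\|$, hence after dividing by $\|\theta\|$ this sum is at most $\lambda(x_1,\dots,x_n)\sum_{i=1}^n\|T(y_i)\|$. For the cumulant term I would split along the dichotomy in Assumption~\ref{assm:glm:cumulant}: if $\|\nabla c(\theta,x_i)\|\le K(x_1,\dots,x_n)$ then $|\theta^{\mathsf T}\nabla c(\theta,x_i)|/\|\theta\|\le K(x_1,\dots,x_n)$, contributing at most $n\,K(x_1,\dots,x_n)$; if instead $0\le\nabla^2 c(\theta,x_i)\le K(x_1,\dots,x_n)\,I$, then monotonicity of the gradient of the convex function $c(\cdot,x_i)$ along the segment from $0$ to $\theta$ gives $\theta^{\mathsf T}(\nabla c(\theta,x_i)-\nabla c(0,x_i))\ge 0$, so $-\theta^{\mathsf T}\nabla c(\theta,x_i)/\|\theta\|\le \|\nabla c(0,x_i)\|$, contributing at most $n\max_{1\le i\le n}\|\nabla c(0,x_i)\|$.

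The prior term is where the strict decay arises. Under Assumption~\ref{assm:prior:cnvx}, $\lambda_1$-strong convexity gives $\theta^{\mathsf T}(\nabla g(\theta)-\nabla g(0))\ge \lambda_1\|\theta\|^2$, hence $-\theta^{\mathsf T}\nabla g(\theta)/\|\theta\|\le \|\nabla g(0)\|-\lambda_1\|\theta\|$; under Assumption~\ref{assm:prior:diss}, dissipativity gives $\theta^{\mathsf T}\nabla g(\theta)>a_{\dag}\|\theta\|^2-b_{\dag}$, hence $-\theta^{\mathsf T}\nabla g(\theta)/\|\theta\|<-a_{\dag}\|\theta\|+b_{\dag}/\|\theta\|\le -a_{\dag}\|\theta\|+b_{\dag}$ once $\|\theta\|\ge 1$. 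Collecting the bounds over the four pairings of the cumulant case with the prior case yields $\langle\theta/\|\theta\|,\nabla\log f(\theta\mid y,x)\rangle\le C_1-c\|\theta\|$ for all $\|\theta\|\ge 1$, with $c=\lambda_1$ (resp. $c=a_{\dag}$) and $C_1$ exactly as in \eqref{eq:C_1_1}--\eqref{eq:C_1_4}; this is precisely Assumption~\ref{assm:super'} with $f_s(u)=c\,u$ and $M_s=1$, establishing the lemma.

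I expect the only real subtlety to be the bookkeeping around $\nabla\Pi$, which is an $m\times p$ Jacobian rather than a gradient vector, so one must check that the hypothesis $\|\nabla\Pi(\theta,x_i)\|_{sp}\le\lambda$ is applied to the correct object and that $\langle\Pi(\theta,x_i),T(y_i)\rangle$ differentiates as claimed. The remaining estimates are routine convexity and Cauchy--Schwarz arguments, needing only the twice-differentiability from Assumption~\ref{assm:glm:diff} to justify the mean-value and segment-integration steps.
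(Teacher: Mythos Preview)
Your proposal is correct and follows essentially the same route as the paper: compute $\langle\theta/\|\theta\|,\nabla\log f(\theta\mid y,x)\rangle$, then bound the $\Pi$-term via the spectral-norm assumption, the cumulant term via the dichotomy in Assumption~\ref{assm:glm:cumulant}, and the prior term via strong convexity or dissipativity to extract the linear decay $-c\|\theta\|$. The only cosmetic difference is that you invoke the first-order (monotone-gradient) characterizations of convexity and strong convexity, whereas the paper writes out the mean-value/Taylor expansion with the Hessian at an intermediate point; these are equivalent and yield the same constants $C_1$ in \eqref{eq:C_1_1}--\eqref{eq:C_1_4}.
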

\begin{proof}[Proof of Lemma~\ref{glmsuplem}]
	The proof is based on finding upper bound estimates for the posterior as defined in \eqref{glmposterior} by dividing the problem into two parts based on the prior being strongly convex or dissipative. First consider the case where $g(\cdot)$ is strongly convex. Note that this shall have two sub-cases based on whether $c(\theta,x)$ is bounded in $\theta$ or whether $c(\theta,x)$ is convex with bounded curvature. Consider the first sub-case. Note that 
	\begin{align*}
	\nabla \log f(\theta \mid y,x)=\sum_{i=1}^{n}\nabla \Pi(\theta,x_i)\cdot T(y_i)-\sum_{i=1}^{n}\nabla c(\theta,x_i)-\nabla g(\theta).
	\end{align*}
	Hence,
	\begin{align*}
	&\frac{1}{\left\|\theta\right\|}\theta^{\mathsf{T}}\nabla \log f(\theta \mid y,x)\\&=\frac{1}{\left\|\theta\right\|}\sum_{i=1}^{n}\theta^{\mathsf{T}}\nabla \Pi(\theta,x_i)\cdot T(y_i)-\frac{1}{\left\|\theta\right\|}\sum_{i=1}^{n}\theta^{\mathsf{T}}\nabla c(\theta,x_i)-\frac{1}{\left\|\theta\right\|}\theta^{\mathsf{T}}\nabla g(\theta)\\
	&\le \lambda(x_1,x_2,\cdots,x_n) \sum_{i=1}^{n}\left\|T(y_i)\right\|+n\, K(x_1,x_2,\cdots,x_n)+\left\|\nabla g(0)\right\|-\frac{1}{\|\theta\|}\theta^{\mathsf{T}}\nabla^2 g(\xi_{\theta})\theta.
	\end{align*}	
	Note that since $g$ is strongly convex , $\lambda_1>0$ such that \(\theta^{\mathsf{T}}\nabla^2 g(x)\theta >\lambda_1\) for all $x$ and $\theta$ such that $\|\theta\|=1$. Therefore,
		\begin{align*}
		\frac{1}{\left\|\theta\right\|}\theta^{\mathsf{T}}\nabla \log f(\theta \mid y,x)
		&\le \lambda(x_1,x_2,\cdots,x_n) \sum_{i=1}^{n}\left\|T(y_i)\right\|+n\, K(x_1,x_2,\cdots,x_n)+\left\|\nabla g(0)\right\|-\lambda_1 \left\|\theta\right\|.
		\end{align*}
	Therefore the first sub-case is established. Now, consider the case where $c(\cdot,x)$ is convex with bounded curvature. In this regime,
	\begin{align*}
	&\frac{1}{\left\|\theta\right\|}\theta^{\mathsf{T}}\nabla \log f(\theta \mid y,x)\\
	&\le \lambda(x_1,x_2,\cdots,x_n) \sum_{i=1}^{n}\left\|T(y_i)\right\|-\sum_{i=1}^{n}\frac{1}{\left\|\theta\right\|}\theta^{\mathsf{T}}\left(\nabla c(0,x_i)+\nabla^2 c(\xi_{\theta,x},x)\theta\right)-\frac{1}{\left\|\theta\right\|} \theta^{\mathsf{T}}\nabla g(\theta)\\
	&\le  \lambda(x_1,x_2,\cdots,x_n) \sum_{i=1}^{n}\left\|T(y_i)\right\| +n\max_{1\le i\le n} \left\|\nabla c(0,x_i)\right\|+\left\|\nabla g(0)\right\|\\
	&\quad \quad -\frac{1}{\|\theta\|}\theta^{\mathsf{T}}\left(\nabla^2 c(\xi_{\theta,x},x)+\nabla^2 g(\xi_{\theta})\right)\theta\\
	&\le \lambda(x_1,x_2,\cdots,x_n) \sum_{i=1}^{n}\left\|T(y_i)\right\| +n\max_{1\le i\le n} \left\|\nabla c(0,x_i)\right\|+\left\|\nabla g(0)\right\|-\lambda_1\left\|\theta\right\|.
	\end{align*} 
	Next consider the case where $g(\cdot)$ is $(a,b)$-dissipative where divide the problem again, in two cases- where $c(\cdot,x)$ is bounded, and $c(\cdot,x)$ is convex and the curvature is bounded. In the first regime,
	\begin{align*}
	\frac{1}{\left\|\theta\right\|}\theta^{\mathsf{T}}\nabla \log f(\theta \mid y,x)&=\frac{1}{\left\|\theta\right\|}\sum_{i=1}^{n}\theta^{\mathsf{T}}\nabla \Pi(\theta,x_i)\cdot T(y_i)-\frac{1}{\left\|\theta\right\|}\sum_{i=1}^{n}\theta^{\mathsf{T}}\nabla c(\theta,x_i)-\frac{1}{\left\|\theta\right\|}\theta^{\mathsf{T}}\nabla g(\theta)\\
	&\le \lambda(x_1,x_2,\cdots,x_n) \sum_{i=1}^{n}\left\|T(y_i)\right\|+n\, K(x_1,x_2,\cdots,x_n)-a_{\dag}\left\|\theta\right\|+\frac{b_{\dag}}{\left\|\theta\right\|}\\
	&\le \lambda(x_1,x_2,\cdots,x_n) \sum_{i=1}^{n}\left\|T(y_i)\right\|+n\, K(x_1,x_2,\cdots,x_n)-a_{\dag}\left\|\theta\right\|+b_{\dag}.
	\end{align*}
	In the regime where $c(\cdot,x)$ is convex with bounded curvature,
	\begin{align*}
	\frac{1}{\left\|\theta\right\|}\theta^{\mathsf{T}}\nabla \log f(\theta \mid y,x)&=\frac{1}{\left\|\theta\right\|}\sum_{i=1}^{n}\theta^{\mathsf{T}}\nabla \Pi(\theta,x_i)\cdot T(y_i)-\frac{1}{\left\|\theta\right\|}\sum_{i=1}^{n}\theta^{\mathsf{T}}\nabla c(\theta,x_i)-\frac{1}{\left\|\theta\right\|}\theta^{\mathsf{T}}\nabla g(\theta)\\
	&\le \lambda(x_1,x_2,\cdots,x_n) \sum_{i=1}^{n}\left\|T(y_i)\right\|+n\, \max_{1\le i\le n}\left\|\nabla c(0,x_i)\right\|-a_{\dag}\left\|\theta\right\|+\frac{b_{\dag}}{\left\|\theta\right\|}\\
	&\le \lambda(x_1,x_2,\cdots,x_n) \sum_{i=1}^{n}\left\|T(y_i)\right\|+n\, \max_{1\le i\le n}\left\|\nabla c(0,x_i)\right\|-a_{\dag}\left\|\theta\right\|+b_{\dag}.
	\end{align*}
	Hence we are done.		
\end{proof}
Note that $C_1$ has a different expression in four separate cases as- bounded derivative and strongly convex $g(\cdot)$,  convex $c(\cdot,x)$ and strongly convex $g(\cdot)$, bounded derivative and dissipative $g(\cdot)$, convex $c(\cdot,x)$ and dissipative $g(\cdot)$ as
\begin{align*}
        C_1&=\lambda(x_1,x_2,\cdots,x_n) \sum_{i=1}^{n}\left\|T(y_i)\right\|+n\, K(x_1,x_2,\cdots,x_n)+\left\|\nabla g(0)\right\|\\
        C_1&=\lambda(x_1,x_2,\cdots,x_n) \sum_{i=1}^{n}\left\|T(y_i)\right\| +n\max_{1\le i\le n} \left\|\nabla c(0,x_i)\right\|+\left\|\nabla g(0)\right\|\\
        C_1&=\lambda(x_1,x_2,\cdots,x_n) \sum_{i=1}^{n}\left\|T(y_i)\right\|+n\, K(x_1,x_2,\cdots,x_n)+b_{\dag}\\
        C_1&=\lambda(x_1,x_2,\cdots,x_n) \sum_{i=1}^{n}\left\|T(y_i)\right\| +n\max_{1\le i\le n} \left\|\nabla c(0,x_i)\right\|+b_{\dag}
\end{align*}
respectively.
\begin{lemma}\label{lemmabounded1}
	Let the Assumptions~\ref{assm:glm:diff}-~\ref{assm:prior:Lipschitz} hold. Then for the posterior  $f(\theta\mid y,x)$, as defined in \eqref{glmposterior}, there exists a quadratic function $f_{y,x}:\mathbb{R}\to \mathbb{R}_{+}$ such that
	\begin{align*}
	\left|\log f(\theta\mid y,x)\right|\le f_{y,x}(\left\|\theta\right\|),
	\end{align*} 
	for all $\theta \in \mathbb{R}^p$ where 
	\begin{align*}
	    f_{y,x}(\left\|\theta\right\|)=K_1+K_2\left\|\theta\right\|+K_3\left\|\theta\right\|^2.
	\end{align*}
	For  the case where $\nabla c(\theta,x)$ is bounded
	\begin{align*}
	    K_1&=\max_{1\le i \le n} \left\|\Pi(0,x_i)\right\|\sum_{i=1}^{n} \left\|T(y_i)\right\|+n\max_{1\le i \le n}\left\|c(0,x_i)\right\|+\left\|g(0)\right\|,\\
	    K_2&=\lambda(x_1,x_2,\cdots,x_n)\sum_{i=1}^{n} \left\|T(y_i)\right\|+K(x_1,x_2\cdots,x_n)+\left\|\nabla g(0)\right\|,\\
	    K_3&=\frac{1}{2}\lambda_2.
	\end{align*}
	 and for the case that $c(\theta,x)$ is convex in $\theta$ and $\nabla^2 c(\theta,x)$ is bounded
		\begin{align*}
	    K_1&=\max_{1\le i \le n} \left\|\Pi(0,x_i)\right\|\sum_{i=1}^{n} \left\|T(y_i)\right\|+n\max_{1\le i \le n}\left\|c(0,x_i)\right\|+\left\|g(0)\right\|\\
	    K_2&=\lambda(x_1,x_2,\cdots,x_n)\sum_{i=1}^{n} \left\|T(y_i)\right\|+\max_{1\le i \le n}\left\|\nabla c(0,x_i)\right\|+\left\|\nabla g(0)\right\|,\\
	    K_3&=\frac{1}{2}\left(\lambda_2+K(x_1,x_2,\cdots,x_n)\right).
	\end{align*}
.
\end{lemma}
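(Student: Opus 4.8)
The plan is to bound the log-posterior by treating each of the three additive pieces of the exponent in \eqref{glmposterior}---the linear-predictor term $\sum_{i=1}^n\langle\Pi(\theta,x_i),T(y_i)\rangle$, the cumulant term $\sum_{i=1}^n c(\theta,x_i)$, and the log-prior term $g(\theta)$---separately, controlling each in absolute value by an affine or quadratic function of $\|\theta\|$, and then adding the bounds. Since the posterior is assumed proper, the normalizing constant in \eqref{glmposterior} is finite and positive, so it shifts $\log f(\theta\mid y,x)$ by a constant that can be absorbed into $K_1$; I would work with the exponent throughout.

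First I would bound the linear-predictor term. By Assumption~\ref{assm:glm:diff} each $\Pi(\cdot,x_i)$ is $C^1$, and by Assumption~\ref{assm:glm:Pi:bdd} its Jacobian has spectral norm at most $\lambda(x_1,\cdots,x_n)$; integrating $\nabla\Pi$ along the segment from $0$ to $\theta$ gives $\|\Pi(\theta,x_i)-\Pi(0,x_i)\|\le\lambda(x_1,\cdots,x_n)\|\theta\|$, hence $\|\Pi(\theta,x_i)\|\le\|\Pi(0,x_i)\|+\lambda(x_1,\cdots,x_n)\|\theta\|$. Cauchy--Schwarz and summation then bound $\bigl|\sum_{i=1}^n\langle\Pi(\theta,x_i),T(y_i)\rangle\bigr|$ by $\max_{1\le i\le n}\|\Pi(0,x_i)\|\sum_{i=1}^n\|T(y_i)\|+\lambda(x_1,\cdots,x_n)\|\theta\|\sum_{i=1}^n\|T(y_i)\|$, which supplies the first summand of $K_1$ and the $T$-dependent summand of $K_2$ in both cases.

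Next I would handle the cumulant term using the two alternatives in Assumption~\ref{assm:glm:cumulant}. If $\|\nabla c(\theta,x_i)\|\le K(x_1,\cdots,x_n)$, the same mean-value estimate gives $|c(\theta,x_i)|\le\|c(0,x_i)\|+K(x_1,\cdots,x_n)\|\theta\|$, so the cumulant term contributes only to $K_1$ and $K_2$ and nothing quadratic, leaving $K_3=\tfrac12\lambda_2$. If instead $0\le\nabla^2 c(\theta,x_i)\le K(x_1,\cdots,x_n)\,I$, a second-order Taylor expansion about $0$ with integral remainder gives $|c(\theta,x_i)|\le\|c(0,x_i)\|+\|\nabla c(0,x_i)\|\|\theta\|+\tfrac12 K(x_1,\cdots,x_n)\|\theta\|^2$, which adds $\tfrac12 K(x_1,\cdots,x_n)$ to $K_3$ and changes the linear coefficient to involve $\max_{1\le i\le n}\|\nabla c(0,x_i)\|$. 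For the log-prior I would invoke the descent-lemma consequence of Assumption~\ref{assm:prior:Lipschitz}: since $\nabla g$ is $\lambda_2$-Lipschitz, $|g(\theta)-g(0)-\langle\nabla g(0),\theta\rangle|\le\tfrac{\lambda_2}{2}\|\theta\|^2$, whence $|g(\theta)|\le\|g(0)\|+\|\nabla g(0)\|\|\theta\|+\tfrac{\lambda_2}{2}\|\theta\|^2$. Adding the three bounds by the triangle inequality yields $|\log f(\theta\mid y,x)|\le K_1+K_2\|\theta\|+K_3\|\theta\|^2$, and reading off the constant, linear, and quadratic coefficients in each of the two cases gives precisely the stated $K_1,K_2,K_3$.

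The argument is essentially bookkeeping, so there is no substantive obstacle; the only points needing a little care are the passage from the spectral-norm control of $\nabla\Pi$ to the Lipschitz control of $\Pi$ itself, and, in the second cumulant case, the correct use of the integral form of Taylor's remainder together with the upper Hessian bound to produce the $\tfrac12 K(x_1,\cdots,x_n)$ coefficient. Keeping the two sub-cases of Assumption~\ref{assm:glm:cumulant} separate throughout is what guarantees that the collected coefficients match $K_1,K_2,K_3$.
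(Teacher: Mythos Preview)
Your proposal is correct and follows essentially the same approach as the paper: decompose the log-posterior into the linear-predictor, cumulant, and log-prior pieces, bound each in absolute value via a mean-value/Taylor argument using Assumptions~\ref{assm:glm:diff}--\ref{assm:prior:Lipschitz}, and collect the constant, linear, and quadratic coefficients in the two sub-cases of Assumption~\ref{assm:glm:cumulant}. Your explicit mention of the normalizing constant (which the paper glosses over and does not include in the stated $K_1$) is a minor but legitimate clarification.
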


\begin{proof}[Proof of Lemma~\ref{lemmabounded1}]
	 This result allows one to establish an envelope function for \eqref{glmposterior}, with certain monotone property. Firstly, one observes
	\begin{align*}
	\exp\left\{\sum_{i=1}^{n}T(y_i)\cdot \Pi(\theta,x_i)-\sum_{i=1}^{n}c(\theta,x_i)-g(\theta)\right\}\le \exp\left\{\sum_{i=1}^{n}\left|\Pi(\theta,x_i)\right|\left|T(y_i)\right| +\sum_{i=1}^{n}\left|c(\theta,x_i)\right|+\left|g(\theta)\right|\right\},
	\end{align*}
	which follows from the monotonicity of the exponential function.
	We have, from Assumptions~\ref{assm:glm:diff}-\ref{assm:prior:Lipschitz},
	\begin{align*}
	\left\|\Pi(\theta,x_i)\right\|&\le \left\|\Pi(\theta,x_i)-\Pi(0,x_i)\right\|+\left\|\Pi(0,x_i)\right\|\\
	& =\left\|\nabla \Pi(\xi,x_i)\theta\right\|+\left\|\Pi(0,x_i)\right\|\\
	& \le \left|\left|\nabla \Pi(\xi,x_i)\right|\right|_{sp}\left\|\theta\right\|+\left\|\Pi(0)\right\|\\
	&\le \lambda(x_1,x_2,\cdots,x_n) \left\|\theta\right\| +\left\|\Pi(0,x_i)\right\|\\
	&\le \lambda(x_1,x_2,\cdots,x_n) \left\|\theta\right\|+\max_{1\le i \le n} \left\|\Pi(0,x_i)\right\|.
	\end{align*}
	Similarly, for the function $c(\theta)$, under the first case, we have
	\begin{align*}
	\left|c(\theta,x_i)\right|&\le \left\|\nabla c(\xi,x_i)\theta\right\|+\left|c(0,x_i)\right|\\
	& \le K(x_1,x_2,\cdots,x_n)\left\|\theta\right\|+\left|c(0,x_i)\right|\\
	&\le K(x_1,x_2,\cdots,x_n)\left\|\theta\right\|+\max_{1\le i \le n}\left|c(0,x_i)\right|,
	\end{align*}
	and under the second case where $c(\theta,x)$ is convex with bounded curvature, 
	\begin{align*}
	\left|c(\theta,x_i)\right|&\le \left|c(0,x_i)\right|+\left\|\nabla c(0,x_i)^{T}\theta\right\|+\frac{1}{2}\theta^{T} \nabla^2c(\xi,x_i)\theta\\
	&\le \max_{1\le i \le n}\left|c(0,x_i)\right|+\left\|\nabla c(0,x_i)\right\|\left\|\theta\right\|+\frac{K(x_1,x_2,\cdots,x_n)}{2}\left\|\theta\right\|^2.
	\end{align*}
	Note that $\theta^{T} \nabla^2c(\xi,x_i)\theta\ge 0$ by the convexity of $c$.

	Also,
	\begin{align*}
	\left|g(\theta)\right|&=\left|g(0)+\theta\cdot \nabla g(0)+\frac{1}{2}\theta^{T}\nabla^2 g(\xi_2)\theta\right|\\
	&\le \left|g(0)\right|+\left\|\nabla g(0)\right\|\left\|\theta\right\|+\frac{1}{2}\lambda_2 \left\|\theta\right\|^2.
	\end{align*}
		In the case where \(\|\nabla c(\theta,x_i)\| \le K(x_1,x_2,\cdots,x_n)\), define
	\begin{align*}
	    K_1&=\max_{1\le i \le n} \left\|\Pi(0,x_i)\right\|\sum_{i=1}^{n} \left\|T(y_i)\right\|+n\max_{1\le i \le n}\left|c(0,x_i)\right|+\left|g(0)\right|,\\
	    K_2&=\lambda(x_1,x_2,\cdots,x_n)\sum_{i=1}^{n} \left\|T(y_i)\right\|+K(x_1,x_2\cdots,x_n)+\left\|\nabla g(0)\right\|,\\
	    K_3&=\frac{1}{2}\lambda_2.
	\end{align*}
	In the second case, where $c(\cdot,x_i)$ is convex for all $i$ with $\nabla^2 c(\theta,x_i) \le K(x_1,x_2,\cdots,x_n)$, define
	\begin{align*}
	    K_1&=\max_{1\le i \le n} \left\|\Pi(0,x_i)\right\|\sum_{i=1}^{n} \left\|T(y_i)\right\|+n\max_{1\le i \le n}\left|c(0,x_i)\right|+\left|g(0)\right|,
	\end{align*}
	\begin{align*}
	    K_2&=\lambda(x_1,x_2,\cdots,x_n)\sum_{i=1}^{n} \left\|T(y_i)\right\|+\max_{1\le i \le n}\left\|\nabla c(0,x_i)\right\|+\left\|\nabla g(0)\right\|,\\
	    K_3&=\frac{1}{2}\left(\lambda_2+K(x_1,x_2,\cdots,x_n)\right).
	\end{align*}
	Observe that taking 
	\begin{align*}
	    f_{y,x}(\left\|\theta\right\|)=K_1+K_2\left\|\theta\right\|+K_3\left\|\theta\right\|^2
	\end{align*}
	suffices. Hence we are done.
\end{proof}
\begin{remark}
    Lemma~\ref{lemmabounded1} exhibits that we satisfy assumption~\ref{assm:envelope} for the posterior defined in \eqref{glmposterior}.
\end{remark}

\begin{lemma}\label{glmmplemma}
	Let Assumptions~\ref{assm:glm:diff}-~\ref{assm:prior:Lipschitz} hold and let either Assumption~\ref{assm:prior:cnvx} or Assumption~\ref{assm:prior:diss} be true. Define $\gamma=\lambda_1$  when Assumption~\ref{assm:prior:cnvx} holds and $\gamma=a_{\dag}$ when Assumption~\ref{assm:prior:diss} holds with $0<\eta<\gamma/\lambda_2$. Also define 
	\[\tilde{J}(x_1,x_2,\cdots,x_n)=n\max_{1\le i \le n}\left\|\nabla c(0,x_i)\right\|+n\, K(x_1,x_2,\cdots,x_n)\] when $c(\theta,x)$ is convex in $\theta$ with bounded Hessian and 
	\[\tilde{J}(x_1,x_2,\cdots,x_n)=n\, K(x_1,x_2,\cdots,x_n)\]
	when $c(\theta,x)$ is  is bounded.
	In this setting,
	\[f_s (\left\|\theta\right\|)-C_1 \ge \eta \left\|\nabla f(\theta)\right\| \]
	with \[M'_p=\max\left[\frac{1}{\gamma-\eta \lambda_2} \left(C_1+\lambda \sum_{i=1}^{n} \left\|T(y_i)\right\|+\tilde{J}(x_1,x_2,\cdots,x_n)+\left\|\nabla g(0)\right\|\right),1\right].\]
 Also $C_1$ is as defined in \eqref{eq:C_1_1}-\eqref{eq:C_1_4}.
\end{lemma}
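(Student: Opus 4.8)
The plan is to verify, for every $\theta$ with $\|\theta\| > M'_p$, the gradient inequality consumed by Proposition~\ref{prop:curvature1} for the posterior \eqref{glmposterior}, namely $f_s(\|\theta\|) - C_1 \ge \eta\,\|\nabla\log f(\theta\mid y,x)\|$ (here ``$\nabla f$'' in the statement should be read as $\nabla\log f$, which is what Proposition~\ref{prop:curvature1} uses and is in any case immaterial since $\nabla f/\|\nabla f\| = \nabla\log f/\|\nabla\log f\|$). First I would invoke Lemma~\ref{glmsuplem}, which already establishes that the posterior is superexponentially light with rate function $f_s(u) = \gamma u$, where $\gamma = \lambda_1$ under Assumption~\ref{assm:prior:cnvx} and $\gamma = a_{\dag}$ under Assumption~\ref{assm:prior:diss}, and with $C_1$ the case-appropriate constant from \eqref{eq:C_1_1}--\eqref{eq:C_1_4}. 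The task then reduces to producing an explicit radius beyond which $\gamma\|\theta\| - C_1 \ge \eta\,\|\nabla\log f(\theta\mid y,x)\|$.

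Second, I would produce an explicit upper bound on $\|\nabla\log f(\theta\mid y,x)\|$. Differentiating \eqref{glmposterior},
\[
\nabla\log f(\theta\mid y,x) = \sum_{i=1}^{n}\nabla\Pi(\theta,x_i)\,T(y_i) - \sum_{i=1}^{n}\nabla c(\theta,x_i) - \nabla g(\theta),
\]
so the triangle inequality and the operator-norm estimate, together with Assumption~\ref{assm:glm:Pi:bdd}, give
\[
\|\nabla\log f(\theta\mid y,x)\| \le \lambda(x_1,\dots,x_n)\sum_{i=1}^{n}\|T(y_i)\| + \sum_{i=1}^{n}\|\nabla c(\theta,x_i)\| + \|\nabla g(\theta)\|.
\]
For the cumulant terms I would split on the two alternatives in Assumption~\ref{assm:glm:cumulant}: if $\nabla c(\cdot,x_i)$ is bounded the middle sum is at most $nK(x_1,\dots,x_n)$; if instead $0 \le \nabla^2 c(\cdot,x_i) \le K(x_1,\dots,x_n)\,I$, the fundamental theorem of calculus yields $\|\nabla c(\theta,x_i)\| \le \|\nabla c(0,x_i)\| + K(x_1,\dots,x_n)\|\theta\|$. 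In both cases the $\theta$-independent part is packaged into $\tilde{J}(x_1,\dots,x_n)$. Finally, Assumption~\ref{assm:prior:Lipschitz} gives $\|\nabla g(\theta)\| \le \|\nabla g(0)\| + \lambda_2\|\theta\|$.

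Third, I would combine: the bound takes the shape $\|\nabla\log f(\theta\mid y,x)\| \le \big(\lambda\sum_i\|T(y_i)\| + \tilde{J} + \|\nabla g(0)\|\big) + \lambda_2\|\theta\|$, so the target inequality holds as soon as $(\gamma - \eta\lambda_2)\|\theta\| \ge C_1 + \eta\big(\lambda\sum_i\|T(y_i)\| + \tilde{J} + \|\nabla g(0)\|\big)$. Since $\eta < \gamma/\lambda_2$ the leading coefficient $\gamma - \eta\lambda_2$ is strictly positive, and since $\eta < 1$ (as required by Proposition~\ref{prop:curvature1}) one may drop the $\eta$ multiplying the constant; solving for $\|\theta\|$ gives precisely the displayed $M'_p$. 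The extra clause $\|\theta\| \ge 1$ enters only because Lemma~\ref{glmsuplem} is stated on $\{\|\theta\|\ge 1\}$, which is why $M'_p$ is taken as a maximum with $1$.

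The main obstacle is the bookkeeping in the convex-$c$ branch of Assumption~\ref{assm:glm:cumulant}, where $\|\nabla c(\theta,x_i)\|$ itself grows linearly in $\|\theta\|$: one must be careful that this extra linear term is folded into the $\lambda_2\|\theta\|$ contribution (effectively enlarging the slope that controls the threshold) so that $\gamma - \eta\lambda_2$ remains the operative denominator in $M'_p$; in the bounded-derivative branch this complication is absent and the estimate is immediate. A secondary, purely clerical, step is to match each of the four expressions for $C_1$ in \eqref{eq:C_1_1}--\eqref{eq:C_1_4} and the two expressions for $\tilde{J}$ to the corresponding sub-case, so that the constants coincide with those of Lemma~\ref{glmsuplem}.
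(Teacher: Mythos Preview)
Your proposal takes essentially the same approach as the paper: differentiate \eqref{glmposterior}, bound $\|\nabla\log f(\theta\mid y,x)\|$ via the triangle inequality using Assumptions~\ref{assm:glm:Pi:bdd}--\ref{assm:prior:Lipschitz}, and solve the resulting linear-in-$\|\theta\|$ inequality to read off $M'_p$. Your caution about the convex-$c$ branch (where $\|\nabla c(\theta,x_i)\|$ itself contributes a term linear in $\|\theta\|$) is well placed; the paper's written proof in fact records only $n\max_{i}\|\nabla c(0,x_i)\| + nK$ in that case without the extra $\|\theta\|$ factor, so the displayed $M'_p$ matches the paper's computation even though, as you correctly flag, a fully careful bookkeeping would enlarge the linear coefficient.
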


\begin{proof}[Proof of Lemma~\ref{glmmplemma}]
     We only consider the case where $c(\theta,x)$ has bounded curvature with respect to $\theta$ since the other case is exactly similar. 
     \[\nabla \log f(\theta\mid y,x)=\sum_{i=1}^{n}\nabla \Pi(\theta,x_i)\cdot T(y_i)-\sum_{i=1}^{n}\nabla c(\theta,x_i)-\nabla g(\theta).\]
     Hence, in the case of strong convexity of the objective function $g(\cdot)$, we have  
     \begin{align*}
         \left\|\nabla \log f(\theta\mid y,x)\right\|&\le \sum_{i=1}^{n}\left\|\nabla \Pi(\theta,x_i)\cdot T(y_i)\right\|+\sum_{i=1}^{n} \left\|\nabla c(\theta,x_i)\right\|+\left\|\nabla g(\theta)\right\|\\
         & \le \sum_{i=1}^{n}\|\nabla \Pi(\theta,x_i)\|_{sp} \left\|T(y_i)\right\|+\sum_{i=1}^{n} \left\|\nabla c(\theta,x_i)\right\|+\left\|\nabla g(\theta)\right\|\\
          &\le \lambda \sum_{i=1}^{n} \left\|T(y_i)\right\|+\sum_{i=1}^{n} \left\|\nabla c(\theta,x_i)\right\|+\left\|\nabla g(0)\right\|+\left\|\nabla^2 g(\xi_{\theta})\theta\right\|\\
         &\le \lambda \sum_{i=1}^{n}\left\|T(y_i)\right\|+\sum_{i=1}^{n} \left\|\nabla c(\theta,x_i)\right\|+\left\|\nabla g(0)\right\|+\lambda_2 \left\|\theta\right\|.        
     \end{align*}
     Now, in the case, where $\nabla c(\theta,x)$ is bounded, we have
     \begin{align*}
          \left\|\nabla \log f(\theta\mid y,x)\right\|&\le \lambda \sum_{i=1}^{n}\left\|T(y_i)\right\|+n\, K(x_1,x_2\cdots,x_n)+\left\|\nabla g(0)\right\|+\lambda_2 \left\|\theta\right\|.
     \end{align*}
     And, in the case, where $\nabla^2 c(\theta,x)$ is bounded, we have 
     \begin{align*}
             \left\|\nabla \log f(\theta\mid y,x)\right\|&\le \lambda \sum_{i=1}^{n}\left\|T(y_i)\right\|+n\,\max_{1\le i\le n}\left\|\nabla c(0,x_i)\right\|+n\, K(x_1,x_2,\cdots,x_n)+\left\|\nabla g(0)\right\|+\lambda_2 \left\|\theta\right\|.      
     \end{align*}
  In the case $g(\cdot)$ is strongly convex, we have \( f_s(\|\theta\|)=\lambda_1 \|\theta\|\)
 and in the case where $g(\cdot)$ is dissipative, we have
 \(f_s(\|\theta\|)=a_{\dag}\|\theta\|\).
Note that when under the condition that 
\begin{align*}
    \left\|\theta\right\|\ge \frac{1}{\gamma-\eta \lambda_2} \left(C_1+\lambda \sum_{i=1}^{n} \left\|T(y_i)\right\|+\tilde{J}(x_1,x_2,\cdots,x_n)+\left\|\nabla g(0)\right\|\right)
\end{align*}
it immediately follows that 
\[f_s(\left\|\theta \right\|)-C_1 \ge \eta \left\|\nabla \log f(\theta \mid y,x)\right\|\]
for both cases with $\eta=1/(\gamma-\eta \lambda_2)$
\end{proof}
\begin{remark}\label{rmrkMp}
    Lemma~\ref{glmmplemma} is important as it allows one to establish a value for $M_p'$ which in turn allows us to obtain a rate of convergence of the algorithm.
\end{remark}
\begin{lemma}\label{glmrepslemma}
    Let Assumptions~\ref{assm:glm:diff}-~\ref{assm:prior:Lipschitz} hold and let either Assumption~\ref{assm:prior:cnvx} or Assumption~\ref{assm:prior:diss} be true. Consider a RWMH algorithm with proposal satisfying assumption~\ref{assm:prop} with stationary distribution as the distribution corresponding to the density given by~\eqref{glmposterior}. Consider any $\epsilon>0$ with $K_{\epsilon}$ such that $Q(0,B(0,K_{\epsilon}))<\epsilon$ and 
    \[\delta <\frac{\epsilon}{q(0) \, C_B(p)\, K^{p-1}_{\epsilon}}\]
    where $C_B(p)$ is the volume of the unit ball. In such a setting,
\begin{equation}\label{glmrepsilon}
\begin{split}
\mathcal{R}_{\epsilon}&=\max\Bigg\{\frac{1}{\gamma}\left(C_1-\frac{1}{\delta}\log\epsilon\right)+K_{\epsilon},\ 2K^2_{\epsilon}\left[\left(\frac{\epsilon}{q(0)\, C_B(p) \delta}\right)^{1/p-1} -K_{\epsilon}\right]^{-1}+K_{\epsilon},\\
& \quad \quad \quad \frac{1}{\gamma}\left(C_1-\log \left(\exp\left\{-f_{x,y}\left(\frac{1}{\gamma}\left(C_1\vee 1\right)\right)\right\}/p^*\wedge 1\right)\right)+1, \max\{M'_p,1\}+K_{\epsilon}\vee1\Bigg\}.
\end{split}
\end{equation}
where $C_1$ is as defined in \eqref{eq:C_1_1}-\ref{eq:C_1_4} , $M'_p$ is as defined in Lemma~\ref{glmmplemma} and  $f_{x,y}$ is as defined in Lemma~\ref{lemmabounded1}.
\end{lemma}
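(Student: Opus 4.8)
The plan is to obtain \eqref{glmrepsilon} as a direct specialization of the generic expression \eqref{convergenceradius} for $\mathcal{R}_{\epsilon}$ established in Proposition~\ref{prop:R_eps}; once Assumptions~\ref{assm:cone}--\ref{assm:prop} have been verified for the posterior \eqref{glmposterior}, all of Propositions~\ref{prop:driftJH}--\ref{prop:R_eps} apply verbatim. Recall that \eqref{convergenceradius} is a maximum of four terms built out of the superexponential rate function $f_s$ and its inverse, the constant $C_1$, the envelope $\tilde f$ of Assumption~\ref{assm:envelope}, the mode value $p^{*}$, the radius $M^{*}=M_p\vee M_s$, and the proposal quantities $K_{\epsilon},\delta,q(0),C_B(p)$. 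Each of the first group is supplied by an earlier GLM lemma: Lemma~\ref{glmsuplem} shows Assumption~\ref{assm:super'} holds with $f_s(u)=\gamma u$, hence $f_s^{-1}(v)=v/\gamma$, with the four case-dependent values of $C_1$ in \eqref{eq:C_1_1}--\eqref{eq:C_1_4}, and with the superexponential inequality \eqref{super:rate} valid on $\|\theta\|\ge 1$, so that $M_s=1$; Lemma~\ref{lemmabounded1} shows Assumption~\ref{assm:envelope} holds with $\tilde f=f_{x,y}$, $f_{x,y}(u)=K_1+K_2u+K_3u^{2}$; and Lemma~\ref{glmmplemma}, together with Proposition~\ref{prop:curvature1}, identifies $M_p=M'_p$ and $\epsilon_{\alpha}=\eta$ for $0<\eta<\gamma/\lambda_2$, whence $M^{*}=M'_p\vee 1=\max\{M'_p,1\}$.

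Substituting these identifications into the four entries of the maximum in \eqref{convergenceradius}, in order, gives: the first entry $f_s^{-1}(C_1-\delta^{-1}\log\epsilon)+K_{\epsilon}$ becomes $\gamma^{-1}(C_1-\delta^{-1}\log\epsilon)+K_{\epsilon}$ (the argument of $f_s^{-1}$ is positive because $C_1\ge 0$ and $\epsilon<1$); the second entry is purely proposal-dependent and is copied unchanged; the third entry, whose envelope is evaluated at $f_s^{-1}(C_1)\vee M_s=(C_1/\gamma)\vee 1$ as in Lemma~\ref{lemmapartition}, becomes $\gamma^{-1}\big(C_1-\log(\exp\{-f_{x,y}(\gamma^{-1}(C_1\vee 1))\}/p^{*}\wedge 1)\big)+1$ after using $f_{x,y}$ in place of $\tilde f$; and the fourth entry $M^{*}+K_{\epsilon}\vee 1$ becomes $\max\{M'_p,1\}+K_{\epsilon}\vee 1$. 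Collecting these reproduces \eqref{glmrepsilon}. The admissibility conditions on $\epsilon$ (as in \eqref{eps_def_1} with $\epsilon_{\alpha}=\eta$), on $\delta$, and on $K_{\epsilon}$ are precisely those of Proposition~\ref{prop:R_eps} and are inherited without change.

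No new analytic ingredient is needed beyond Proposition~\ref{prop:R_eps}; the work is bookkeeping, and the main obstacle is keeping the four cases for $C_1$ (bounded gradient versus bounded Hessian of the cumulant, crossed with strongly convex versus dissipative prior) consistent simultaneously while correctly resolving the doubly nested third term. There one must use the monotonicity of $f_{x,y}$ on $\mathbb{R}_+$ to control the inner argument, and the convention stated in the Remark following Proposition~\ref{prop:R_eps} for values of $f_s^{-1}$ below the range of $f_s$, so that the $\wedge 1$ truncation inside the logarithm is handled correctly. Once the three identifications $f_s^{-1}(v)=v/\gamma$, $\tilde f=f_{x,y}$, and $M^{*}=\max\{M'_p,1\}$ are in place, each of the four terms of \eqref{glmrepsilon} matches a term of \eqref{convergenceradius} by direct substitution.
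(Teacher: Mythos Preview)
The proposal is correct and follows exactly the route the paper takes: it verifies Assumptions~\ref{assm:cone}--\ref{assm:prop} for the posterior via Lemmas~\ref{glmsuplem}, \ref{lemmabounded1}, and \ref{glmmplemma} (together with Proposition~\ref{prop:curvature1}), then substitutes the resulting identifications $f_s^{-1}(v)=v/\gamma$, $\tilde f=f_{x,y}$, $M_s=1$, $M_p=M'_p$ into the generic formula of Proposition~\ref{prop:R_eps}. The paper's own proof is the one-line statement that the result follows from these same lemmas and Proposition~\ref{prop:R_eps} in order; your write-up simply makes the bookkeeping explicit.
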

\begin{proof}
     The proof follows immediately from  Lemma~\ref{glmsuplem} Lemma~\ref{lemmabounded1}, Lemma~\ref{glmmplemma} and Proposition~\ref{prop:R_eps} in that order.
\end{proof}

\begin{lemma}\label{glmr1lemma}
      Let Assumptions~\ref{assm:glm:diff}-~\ref{assm:prior:Lipschitz} hold and let either Assumption~\ref{assm:prior:cnvx} or Assumption~\ref{assm:prior:diss} be true. Consider a RWMH algorithm with proposal satisfying assumption~\ref{assm:prop} with stationary distribution as the distribution corresponding to the density given by~\eqref{glmposterior}. For such an algorithm, we have 
      \[\mathcal{R}_{\max}=f_s^{-1}\left(C_1-\log\left(\frac{(1-\tilde\lambda)^2}{2\,p^* \,(2b+\epsilon_{\alpha})^2}\wedge 1\right)\right)\vee \max\{M'_p,1\}\]
      where $\epsilon_{\alpha}<\eta$ as defined in Lemma~\ref{glmmplemma}, 
      \[b=3\, \exp\left(\frac{1}{2}f_{x,y}(\mathcal{R}_{\epsilon})\right)\]
      where $\mathcal{R}_{\epsilon}$ is as defined in \eqref{glmrepsilon}, $M'_p$ is as defined in Lemma~\ref{glmmplemma}, $\tilde\lambda$ is as defined in \eqref{eq:driftcoeff} and $C_1$ is as given in Lemma~\ref{glmsuplem}.
\end{lemma}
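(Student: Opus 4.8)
The plan is to derive this formula as a direct specialization of Proposition~\ref{prop:R_eps} to the GLM posterior~\eqref{glmposterior}, using the three structural lemmas already established for this family to check its hypotheses. First I would invoke Lemma~\ref{glmsuplem}, which shows the posterior satisfies Assumption~\ref{assm:super'} with the linear decay rate $f_s(u)=\gamma\,u$ --- where $\gamma=\lambda_1$ under Assumption~\ref{assm:prior:cnvx} and $\gamma=a_{\dag}$ under Assumption~\ref{assm:prior:diss} --- with the constant $C_1$ given by \eqref{eq:C_1_1}--\eqref{eq:C_1_4}, and with $M_s=1$ since that lemma is stated on the set $\{\|\theta\|\ge 1\}$. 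Next, Lemma~\ref{lemmabounded1} supplies the envelope of Assumption~\ref{assm:envelope} in the explicit quadratic form $\tilde f(\|\theta\|)=f_{x,y}(\|\theta\|)=K_1+K_2\|\theta\|+K_3\|\theta\|^2$. Finally, Lemma~\ref{glmmplemma} verifies the hypothesis $f_s(\|\theta\|)-C_1\ge \eta\,\|\nabla\log f(\theta\mid y,x)\|$ of Proposition~\ref{prop:curvature1} for $\|\theta\|>M'_p$ and $0<\eta<\gamma/\lambda_2$, so Assumption~\ref{assm:cone} holds with $M_p=M'_p$ and any $\epsilon_{\alpha}<\eta$.

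With Assumptions~\ref{assm:cone}--\ref{assm:prop} now in force and $M^*=\max\{M_s,M_p\}=\max\{1,M'_p\}$, Proposition~\ref{prop:R_eps} applies without modification and gives
\[
\mathcal{R}_{\max}=f_s^{-1}\!\left(C_1-\log\!\left(\frac{(1-\tilde\lambda)^2}{2\,p^*\,(2b+\epsilon_{\alpha})^2}\wedge 1\right)\right)\vee M^*.
\]
Substituting $f_s^{-1}(u)=u/\gamma$, $M^*=\max\{1,M'_p\}$, and $b=3\exp\{\tilde f(\mathcal{R}_{\epsilon})/2\}=3\exp\{f_{x,y}(\mathcal{R}_{\epsilon})/2\}$ (using the definition \eqref{drift:const} together with $\tilde f=f_{x,y}$, and $\mathcal{R}_{\epsilon}$ as in \eqref{glmrepsilon} by Lemma~\ref{glmrepslemma}), and noting that $\tilde\lambda$ in \eqref{eq:driftcoeff} is unchanged because it depends only on the proposal and on $\epsilon_{\alpha}$, yields exactly the claimed expression; here $p^*=\max_\theta f(\theta\mid y,x)$ is well defined since the GLM posterior is superexponentially light (Lemma~\ref{glmsuplem} and Lemma~\ref{lemmadecay2}).

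The step that needs the most care is bookkeeping rather than analysis. One must be sure that the $M^*$ handed to Proposition~\ref{prop:R_eps} really equals $\max\{1,M'_p\}$, the extra $1$ coming precisely from the restriction $\|\theta\|\ge 1$ under which Lemma~\ref{glmsuplem} proves superexponential lightness, and one must keep the cone angle within the admissible window $0<\eta<\gamma/\lambda_2$ of Lemma~\ref{glmmplemma} when choosing $\epsilon_{\alpha}<\eta$, so that the appeal to Proposition~\ref{prop:curvature1} is legitimate. Beyond this, the argument is a pure substitution of the GLM-specific $f_s$, $\tilde f$, $C_1$, $M'_p$, and $b$ into the already-proved general bound, so no new inequality arises.
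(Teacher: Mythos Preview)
Your proposal is correct and follows essentially the same approach as the paper: the paper's proof simply cites Lemmas~\ref{glmsuplem}, \ref{lemmabounded1}, \ref{glmmplemma}, \ref{glmrepslemma} and Proposition~\ref{prop:R_eps} in that order, which is precisely the chain of verifications and the substitution you spell out. Your explicit identification of $M^*=\max\{1,M'_p\}$ (with the $1$ coming from the restriction $\|\theta\|\ge 1$ in Lemma~\ref{glmsuplem}) and of $\tilde f=f_{x,y}$ is exactly the bookkeeping the paper leaves implicit.
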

\begin{proof}
         The proof follows immediately from Lemma~\ref{glmsuplem}, Lemma~\ref{lemmabounded1}, Lemma~\ref{glmmplemma},  Lemma~\ref{glmrepslemma} and Proposition~\ref{prop:R_eps} in order.
\end{proof}
\subsection{Proof of Theorem~\ref{logisticthm}}
\label{app:logisticthem}

\begin{proof}[Proof of Theorem~\ref{logisticthm}]
    Note that the posterior mentioned has the same form as the posterior of the family \eqref{glmfam} and \eqref{glmprior}.
Here $\Pi(\theta,x_i)=x^{T}_i\theta$ and $T(y_i)=y_i$. Hence $$\left\|\sum_{i=1}^{n}\nabla \Pi(\theta,x_i)T(y_i)\right\|=\left\|\sum_{i=1}^{n}y_i x_i\right\|\le \max_{1\le i\le n}\left\|x_i\right\|\sum_{i=1}^{n}\left\|y_i \right\|=\lambda(x_1,x_2,\cdots,x_n)\sum_{i=1}^{n}\left\|y_i \right\|.$$
We also have $c(\theta,x_i)=\log(1+e^{x^{T}_i\theta})$. Hence, $\nabla c(\theta,x_i)=\frac{e^{x^{T}_i\theta}}{1+e^{x^{T}_i\theta}}x_i$. This implies $$\left\|\sum_{i=1}^{n}\nabla c(\theta,x_i)\right\|\le n\max_{1\le i\le n} \|x_i\|.$$ Now using the notation as in ~\eqref{glmprior}, $g(\theta)=\frac{1}{2}\|\theta\|^2$.
So
\begin{align*}
\nabla^2 g(\theta)&=I.
\end{align*}
This implies that we can invoke Proposition~\ref{prop1} to obtain an upper bound for the rate of convergence of the RWMH algorithm. For this, we need to calculate all the quantities involved in the rate. 
Given the problem we have 
\begin{align*}
    K_1&=0\\
    K_2&=\left(n+\sum_{i=1}^{n}\left\|y_i \right\|\right)\max_{1\le i\le n}\left\|x_i\right\|\\
    K_3&=1.
\end{align*}
This immediately implies
\begin{align*}
    f_{x,y}(u)=\max_{1\le i\le n}\left\|x_i\right\|\left(n+\sum_{i=1}^{n}\left\|y_i \right\|\right)\, u+u^2.
\end{align*}
Also 
\begin{align*}
\lambda_1&=1\\
\lambda_2&=1+\frac{1}{4}\lambda_{max}(X^{\mathsf{T}}X)\\
    C_1&=\left(n+\sum_{i=1}^{n}\left\|y_i \right\|\right)\max_{1\le i\le n}\left\|x_i\right\|\\
    f_s(x)&= \left(1+\lambda_{max}(X^{\mathsf{T}}X)/4\right)x\\
    \delta &=\frac{\epsilon}{2\,q(0)\,C_B(p)\, K^{p-1}_{\epsilon}}
\end{align*}
where $K_{\epsilon}$ is such that $Q(0,B(0,K_{\epsilon}))<\epsilon$. Using these quantities,
\begin{align*}
    M'_p&=\max\left[\frac{1}{1-\eta\left(1+\lambda_{max}(X^{\mathsf{T}}X)/4\right)}\left(C_1\, + \max_{1\le i\le n}\left\|x_i\right\|\sum_{i=1}^{n}\left\|y_i\right\|+n \right),1\right]\\
    \mathcal{R}_{\epsilon}&=\max\Bigg\{C_1-\frac{1}{\delta}\log \epsilon +K_{\epsilon}, 2\, K_{\epsilon}\left[2^{1/(p-1)}-1\right]^{-1}+K_{\epsilon},\\
    &\quad \quad C_1+f_{x,y}(C_1\vee 1)+\log(p^*\wedge 1)+1,1+K_{\epsilon}\vee 1,M'_p\Bigg\}\\
    b&=3\, \exp\left(\frac{1}{2}f_{x,y}(\mathcal{R}_{\epsilon})\right)\\
    \mathcal{R}_{\max}&=\left[C_1-\log\left(\frac{1-\tilde{\lambda}}{2\,p^*\,\left(2b+\epsilon_{\alpha}\right)^2}\right)\wedge 1\right]\vee 1
\end{align*}
where $p^*$ is the the maximum of $f(\theta \mid x,y)$ and $\tilde{\lambda}$ and $\epsilon_{\alpha}$ are as defined in \eqref{eq:driftcoeff} and assumption~\ref{assm:cone} respectively.

Hence using Proposition~\ref{prop1}, we have the RWMH algorithm convergence rate bounded by 
\begin{align*}
    \rho=\max\left\{\left(1-\tilde{\eta}\right)^r,\alpha^{-(1-r)}\right\}A^r
\end{align*}
where 
\begin{align*}
    \tilde{\eta}&=\exp\left\{-2\,\left(K_1+K_2\, \mathcal{R}_{\max}+K_3\,\mathcal{R}_{\max}^2\right)\right\}\, q(\mathcal{R}_{\max}) \mu^{Leb}\left(B(0,\mathcal{R}_{\max})\right)\\
	 A&=1+\frac{4\tilde\lambda b+2\tilde\lambda\epsilon_{\alpha}}{1-\tilde{\lambda}}+2b=\frac{2\tilde\lambda\left(b+\epsilon_{\alpha}\right)+2b}{1-\tilde{\lambda}},\\
 \tilde\alpha&=\left(1+\frac{2b+\epsilon_{\alpha}}{1-\tilde{\lambda}}\right)\left[1+2b+\frac{\tilde{\lambda}}{1-\tilde{\lambda}}\left(2b+\epsilon_{\alpha}\right)\right]^{-1}.
\end{align*}
\end{proof}

\section{Proofs of results for Section~\ref{sec:spectral_results}}
\begin{lemma}\label{prop:conductance}
	Under assumptions~\ref{spectral:strngcnvx} and~\ref{spectral:proposalassm}, for the RWMH Markov kernel $P$, wih stationary distribution $F$,
	\begin{align*}
	\Phi^*_P \le \frac{\left(2\pi\right)^{p/2} e^{-V_{\dag}(0)}}{\left(m+m_1\right)^{p/2}}.
	\end{align*}
\end{lemma}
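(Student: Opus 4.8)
The plan is to bound the conductance $\Phi^*_P=\Phi_P(1/2)$ directly by testing the definition of $\Phi_P$ against indicators of small balls shrinking to the mode of $f$. Since $U=-\log f$ is $m$-strongly convex (Assumption~\ref{spectral:strngcnvx}) it has a unique minimizer $x^*$, which is the mode of $f$. First I would observe that for $\varepsilon$ small enough the ball $A_\varepsilon=B(x^*,\varepsilon)$ has $F(A_\varepsilon)<1/2$ (as $F$ has a density), so $A_\varepsilon$ is admissible in $\Phi_P(1/2)$. For any $x\in A_\varepsilon$ the atom $\delta_x$ in the RWMH kernel \eqref{RWMH:kernel} puts no mass on $A_\varepsilon^c$, so $P(x,A_\varepsilon^c)=\int_{A_\varepsilon^c}\alpha(x,y)q(x,y)\,dy\le \int_{\mathbb{R}^p}\alpha(x,y)q(x,y)\,dy=:g(x)$, whence $\int_{A_\varepsilon}P(x,A_\varepsilon^c)\,F(dx)\le \big(\sup_{x\in A_\varepsilon}g(x)\big)F(A_\varepsilon)$ and the ratio in the definition of $\Phi_P$ is at most $\sup_{x\in A_\varepsilon}g(x)$. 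Since $\alpha(x,y)q(x,y)\le q(0)$ and $f,q$ are continuous and strictly positive, $g$ is continuous, so letting $\varepsilon\downarrow 0$ yields $\Phi^*_P\le g(x^*)$.

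Next I would evaluate $g(x^*)$ using that $x^*$ is the mode: $f(y)\le f(x^*)$ for every $y$, so $\alpha(x^*,y)=f(y)/f(x^*)$ and
\[
g(x^*)=\frac{1}{f(x^*)}\int_{\mathbb{R}^p} f(y)\,q(\|x^*-y\|)\,dy=\frac{1}{f(x^*)}\int_{\mathbb{R}^p}\exp\{-U(y)-V_{\dag}(\|x^*-y\|)\}\,dy .
\]
Writing $\widetilde V(w)=V_{\dag}(\|w\|)$ on $\mathbb{R}^p$, I would check that $\widetilde V$ is $m_1$-strongly convex: away from the origin its Hessian has radial eigenvalue $V_{\dag}''(\|w\|)\ge m_1$ and angular eigenvalues $V_{\dag}'(\|w\|)/\|w\|\ge m_1$, the latter because $V_{\dag}'(0)=0$ together with $V_{\dag}''\ge m_1$ gives $V_{\dag}'(r)\ge m_1 r$ for $r>0$. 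Consequently $\phi(y):=U(y)+\widetilde V(x^*-y)$ is $(m+m_1)$-strongly convex, and $m+m_1>0$ by Assumption~\ref{spectral:proposalassm}.

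The last step is a Laplace-type estimate. If $y_0$ minimizes $\phi$, strong convexity gives $\phi(y)\ge \phi(y_0)+\tfrac{m+m_1}{2}\|y-y_0\|^2$, hence
\[
\int_{\mathbb{R}^p} e^{-\phi(y)}\,dy\le e^{-\phi(y_0)}\int_{\mathbb{R}^p} e^{-\frac{m+m_1}{2}\|y-y_0\|^2}\,dy=e^{-\phi(y_0)}\left(\frac{2\pi}{m+m_1}\right)^{p/2}.
\]
Because the minimum of a sum of functions is at least the sum of the minima, $\phi(y_0)\ge \min_y U(y)+\min_w \widetilde V(w)=U(x^*)+V_{\dag}(0)$, so $e^{-\phi(y_0)}\le e^{-U(x^*)}e^{-V_{\dag}(0)}=f(x^*)\,q(0)$. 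Substituting, the factor $f(x^*)$ cancels and $g(x^*)\le e^{-V_{\dag}(0)}(2\pi/(m+m_1))^{p/2}$, which combined with $\Phi^*_P\le g(x^*)$ is exactly the claimed inequality.

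The step I expect to demand the most care is the reduction in the first paragraph: making $\Phi^*_P\le g(x^*)$ fully rigorous — verifying that the atom of $P$ contributes nothing to $P(x,A_\varepsilon^c)$ for $x$ inside the ball, that $g$ is continuous so that $\sup_{A_\varepsilon}g\to g(x^*)$, and that $F(A_\varepsilon)<1/2$ for small $\varepsilon$. A secondary nuisance is confirming that the radial profile $w\mapsto V_{\dag}(\|w\|)$ genuinely inherits $(m+m_1)$-strong convexity on $\mathbb{R}^p$ once combined with $U$; everything after that is the routine Gaussian integral above.
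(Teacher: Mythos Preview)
Your proof is correct and follows essentially the same strategy as the paper: both bound $\Phi^*_P$ by testing against sets shrinking to the unique mode $x^*$ of $f$ and by bounding the total acceptance probability $\alpha(x)=\int\alpha(x,y)q(x,y)\,dy$ there via strong convexity and a Gaussian integral. The details differ slightly: the paper uses the sets $\{\,|\nabla U(x)|\le\rho_a\sqrt{m+m_1}\,\}$, applies the tangent inequality $U(y)-U(x)\ge\langle\nabla U(x),y-x\rangle+\tfrac{m}{2}\|y-x\|^2$ together with $V_\dag(s)\ge V_\dag(0)+\tfrac{m_1}{2}s^2$, completes the square, and kills the resulting factor $\exp\{|\nabla U(x)|^2/(2(m+m_1))\}$ by sending $\rho_a\downarrow 0$. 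You instead use balls $B(x^*,\varepsilon)$, pass to $g(x^*)$ by continuity, exploit $\alpha(x^*,y)=f(y)/f(x^*)$ exactly, and bound via the $(m+m_1)$-strong convexity of $\phi=U+\widetilde V(x^*-\cdot)$. Since $\nabla U(x^*)=0$, the two routes coincide at the limit point; your path is marginally cleaner because it avoids carrying the extra exponential factor, while the paper's intermediate bound on $\alpha(x)$ is valid for all $x$, not just $x^*$.
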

\begin{proof}[Proof of Lemma~\ref{prop:conductance}]
	Denote by $\alpha(x)$ the acceptance from $x$, that is 
 \[\alpha(x)=\int \alpha(x,,y) q(x,y) dy.\]
 We note that 
	\begin{align*}
	U(y)-U(x) \ge \left\langle \nabla U(x),y-x\right\rangle +\frac{m}{2}\left\|y-x\right\|^2
	\end{align*}
	by assumption~\ref{spectral:strngcnvx}.
	Therefore 
	\begin{align*}
	\alpha(x) &\le \int \min\left\{1,\exp\left(\left\langle \nabla U(x),y-x\right\rangle +\frac{m}{2}\left\|y-x\right\|^2\right)\right\}\, q\left(\left\|y-x\right\|\right) dy\\
	&\le \int \exp\left(\left\langle \nabla U(x),y-x\right\rangle +\frac{m}{2}\left\|y-x\right\|^2-V_{\dag}(\|y-x\|)\right)dy
	\end{align*}
	Also, by assumption~\ref{spectral:proposalassm},
	\begin{align*}
	V_{\dag}(s)\ge V_{\dag}(0)+\frac{s^2}{2}\,m_1.
	\end{align*}
	Note that the $V_{\dag}'(\cdot)$ term is not present as $0$ is the minima for $V_{\dag}(\cdot)$ and hence $V_{\dag}'(0)=0$. Therefore
	\begin{align*}
	\alpha(x) &\le \int \exp\left(-\frac{\left(m+m_1\right)}{2}\left\|y-x\right\|^2-\left\langle  \nabla U(x), y-x\right\rangle-V_{\dag}(0) \right) dy\\
	& \le \exp(-V_{\dag}(0)) \int \exp\left(-\frac{\left(m+m_1\right)}{2}\left\|y-x\right\|^2-\left\langle  \nabla U(x), y-x\right\rangle\right) dy\\
	& \le \exp\left\{-V_{\dag}(0)+\frac{\left|\nabla U(x)\right|^2}{2\,\left(m_1+m\right)}\right\}\, \int \exp \left\{-\frac{1}{2}\left\|\sqrt{\left(m+m_1\right)} \left(y-x\right)+\frac{\nabla U(x)}{\sqrt{\left(m_1+m\right)}}\right\|^2\right\}\, dy.
	\end{align*}
	Using the change of variable 
	\[z=\sqrt{\left(m+m_1\right)} \left(y-x\right)+\frac{\nabla U(x)}{\sqrt{\left(m_1+m\right)}}\]
	\begin{align*}
	\alpha(x) &\le \exp\left\{-V_{\dag}(0)+\frac{\left\|\nabla U(x)\right\|^2}{2\,\left(m_1+m\right)}\right\} \int \left(m+m_1\right)^{-p/2}\, \exp\left(-\frac{\left\|z\right\|^2}{2}\right)\, dz\\
	&= \frac{\left(2\pi\right)^{p/2}e^{-V_{\dag}(0)}}{\left(m+m_1\right)^{p/2}}\, \exp\left\{\frac{\left\|\nabla U(x)\right\|^2}{2\,\left(m+m_1\right)}\right\}.
	\end{align*}
	Define 
	\[B_{\rho_a}=\left\{x: \frac{\left\|\nabla U(x)\right\|}{\sqrt{\,\left(m+m_1\right)}}\le \rho_a\right\}.\]
	Note that $\rho_a$ can be chosen small enough such that $F(B_{\rho_a}) \le F(B^c_{\rho_a})$. This implies 
	\begin{align*}
	\Phi^*_P &\le \frac{1}{F(B_{\rho_a})}\int_{B_{\rho_a}} F(dx)\, P(x,B^c_{\rho_a})\\
	& \le \frac{1}{F(B_{\rho_a})}\int_{B_{\rho}} F(dx)\, \alpha(x)\\
	&\le \frac{\left(2\pi\right)^{p/2}e^{-V_{\dag}(0)}}{\left(m+m_1\right)^{p/2}}\, \exp\left(\frac{1}{2}\rho_a^2\right).
	\end{align*}
	Therefore taking $\rho_a\to 0^+$, we have the result.
\end{proof}
\begin{lemma}\label{prop:spectral:gap}
	Under assumptions~\ref{spectral:strngcnvx} and ~\ref{spectral:proposalassm0}, 
	\begin{align*}
	Gap_R(P)\le \frac{1}{2}\,L\,\frac{p\,e^{-V_{\dag}(0)}} {\tilde{J}^{p/2+1}}
	\end{align*}
	where $\tilde{J}=m_1$ in the first case and $\tilde{J}=V_{\dag}''(0)$ in the second case.
\end{lemma}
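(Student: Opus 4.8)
The plan is to bound $Gap_R(P)$ from above by a single cheap choice in the Rayleigh quotient. Since $Gap_R(P)=\inf_{h\in\mathbb{L}_{2,0}(F),\,h\neq 0}\mathcal{E}(P,h)/\|h\|_2^2$, it suffices to exhibit one admissible $h$ whose quotient is at most the claimed bound. I would take $h$ to be a centered linear functional: fix a unit vector $a\in\R^p$, let $\mu=\E_F[X]$, and set $h(x)=\langle a,x-\mu\rangle$; this lies in $\mathbb{L}_{2,0}(F)$ because $F$ is (strongly) log-concave under Assumption~\ref{spectral:strngcnvx} and so has finite second moments, and $h\not\equiv 0$ since $\mathrm{Cov}_F(X)\succ0$. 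Because the rejection part of $P$ is a point mass at the current state and contributes nothing to the Dirichlet form, $\mathcal{E}(P,h)=\frac12\int\int\big(h(x)-h(y)\big)^2\alpha(x,y)\,q(\|x-y\|)\,dy\,f(x)\,dx=\frac12\int f(x)\Big(\int\langle a,x-y\rangle^2\alpha(x,y)\,q(\|x-y\|)\,dy\Big)dx$.

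For the numerator I would discard the Hastings ratio with the crude bound $\alpha(x,y)\le 1$ — wasteful, but the proposal is already concentrated enough to carry the estimate and no finer control of the acceptance probability is needed for an \emph{upper} bound on the gap — and use $\langle a,x-y\rangle^2\le\|x-y\|^2$. The key input from Assumption~\ref{spectral:proposalassm0} is a Gaussian domination of the proposal: since $0$ is the mode, $V_{\dag}'(0)=0$, and a Taylor expansion of $V_{\dag}$ about $0$ — combined either with $m_1$-strong convexity, or with $V_{\dag}''(0)>0$ and $V_{\dag}'''\ge0$ together with the evenness $V_{\dag}(\cdot)=V_{\dag}(-\cdot)$ — gives $V_{\dag}(s)\ge V_{\dag}(0)+\tfrac{\tilde{J}}{2}s^2$ with $\tilde{J}=m_1$ or $\tilde{J}=V_{\dag}''(0)$ respectively, hence $q(\|z\|)\le e^{-V_{\dag}(0)}e^{-\tilde{J}\|z\|^2/2}$. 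Substituting $z=y-x$ and using $\int f=1$ reduces the numerator to a pure Gaussian second-moment integral $\tfrac12 e^{-V_{\dag}(0)}\int_{\R^p}\|z\|^2 e^{-\tilde{J}\|z\|^2/2}\,dz$, whose evaluation produces the $e^{-V_{\dag}(0)}$, the $p$, and the $\tilde{J}^{-(p/2+1)}$ of the claimed bound (up to the dimensional normalizing constant).

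For the denominator I would lower-bound $\|h\|_2^2=\mathrm{Var}_F(\langle a,X\rangle)=a^{\mathsf T}\Sigma a$ with $\Sigma=\mathrm{Cov}_F(X)$, using that the $L$-Lipschitz condition on $\nabla U$ in Assumption~\ref{spectral:strngcnvx} forces $\nabla^2 U\preceq L\,I$. A Cramér–Rao / Brascamp–Lieb type argument does it: integrating by parts gives $\E_F[\nabla U(X)(X-\mu)^{\mathsf T}]=I$ and $\E_F[\nabla U\,\nabla U^{\mathsf T}]=\E_F[\nabla^2 U]\preceq L\,I$, and then Cauchy–Schwarz in the direction $a$ yields $1=(a^{\mathsf T}I a)^2\le (a^{\mathsf T}\E_F[\nabla U\,\nabla U^{\mathsf T}]a)(a^{\mathsf T}\Sigma a)\le L\,(a^{\mathsf T}\Sigma a)$, i.e. $\|h\|_2^2\ge 1/L$. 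Dividing the numerator bound by $1/L$ introduces the single factor $L$ and gives the asserted bound in both branches of the dichotomy.

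The main obstacle I anticipate is the covariance lower bound $\Sigma\succeq L^{-1}I$: one must justify the integration-by-parts identities (finite second moments and vanishing boundary terms, both guaranteed by strong log-concavity) and handle the matrix Cauchy–Schwarz carefully. A secondary technical point is making the quadratic lower bound on $V_{\dag}$ global rather than local in the non-strongly-convex branch, where one leans on $V_{\dag}'''\ge0$ and the evenness of $V_{\dag}$; past these two points, the argument is a routine Gaussian moment computation.
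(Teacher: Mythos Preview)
Your proposal is correct and follows essentially the same route as the paper: both take the centered linear test function $h(x)=\langle a,x-\mu\rangle$, bound the Dirichlet form by discarding $\alpha\le 1$ and applying Cauchy--Schwarz, control $\int\|z\|^2 q(\|z\|)\,dz$ via the quadratic lower bound $V_{\dag}(s)\ge V_{\dag}(0)+\tfrac{\tilde J}{2}s^2$, and use the Cram\'er--Rao inequality $\mathrm{Var}_F(\langle a,X\rangle)\ge a^{\mathsf T}\bigl[\E_F\nabla^2 U(X)\bigr]^{-1}a\ge L^{-1}$ for the denominator. Your integration-by-parts derivation of that last step is exactly the standard proof of the Cram\'er--Rao bound the paper invokes by name, so the arguments coincide.
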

\begin{proof}[Proof of Lemma~\ref{prop:spectral:gap}]
	Note that by Cramer-Rao lower bound, for any vector $v \in \mathbb{R}^p$, 
	\[Var_F\left(\left\langle v,X\right\rangle \right)\ge v^{\mathsf{T}}\, \mathbb{E}_F\left[\nabla^2 U(X)\right]^{-1}\, v\ge \left\|v\right\|^2\, L^{-1}.\]
	Also, for the function $g_v(x)=\langle v, x-\mathbb{E}_F(X)\rangle$, 
	\begin{align*}
	\mathcal{E}(P,g_v)&=\frac{1}{2}\int F(dx)\, P(x,dy)\left(g_v(y)-g_v(x)\right)^2\\
	&=\frac{1}{2}\int F(dx)\, P(x,dy) \left\langle v,y-x\right\rangle^2\\
	&\le \frac{1}{2} \int F(dx)\, q\left(\left\|y-x\right\|\right)\, \left\langle v,y-x\right\rangle^2 \, dy \\
	&\le \frac{1}{2}\int F(dx) \left\|v\right\|^2 \left\|z\right\|^2 Q(0,dz)\\
	&\le \frac{\left\|v\right\|^2}{2} \int \left\|z\right\|^2 q(\|z\|) dz.
	\end{align*}
	Now, note that
	\begin{align*}
	\int \left\|z\right\|^2 q(\|z\|) dz &= \int \left\|z\right\|^2 \exp\left(-V_{\dag}(\|z\|)\right) dz\\
	&=\int \|z\|^2 \exp\left(-V_{\dag}(0)-\frac{\|z\|^2}{2}\int_{0}^{1} V_{\dag}''(s\|z\|) ds\right).
	\end{align*}
	where the second line follows from the fact that $0$ is the mode of $q(\cdot)$. Now, in the case there exists an $m_1>0$, 
	\begin{align*}
	\int \left\|z\right\|^2 q(\|z\|) dz &\le  \int \|z\|^2 \exp\left(-V_{\dag}(0)-\frac{m_1\, \|z\|^2}{2}\right)\\
	&=\frac{p\, e^{-V_{\dag}(0)}}{m^{p/2+1}_1}.
	\end{align*}
	In the case where $V_{\dag}'''(\cdot)\ge 0$, we know that 
	\[\frac{\|z\|^2}{2}\int_{0}^{1} V_{\dag}''(s\|z\|) ds\ge \frac{\|z\|^2}{2} V_{\dag}''(0).\]
	Hence we obtain the same bound as done previously with $V_{\dag}''(0)$ instead of $m_1$. Therefore 
	\begin{align*}
	Gap_R(P)&=\inf_{h\in \mathbb{L}_{2,0}(F)}\frac{\mathcal{E}\left(Ph,h\right)}{\left\|h\right\|^2_2}\\
	&\le \frac{\mathcal{E}\left(Pg_v,g_v\right)}{\left\|g_v\right\|^2_2}\\
	&\le \frac{1}{2}\,L\,\frac{p\,e^{-V_{\dag}(0)}} {\tilde{J}^{p/2+1}}
	\end{align*}
	where $\tilde{J}=m_1$ in the first case and $\tilde{J}=V_{\dag}''(0)$ in the second case.
\end{proof}
\end{appendices}

\end{document}